\numberwithin{equation}{section}
\newtheorem{theorem}{Theorem}
\newtheorem{conjecture}[theorem]{Conjecture}
\newtheorem{corollary}[theorem]{Corollary}
\newtheorem{definition}[theorem]{Definition}
\newtheorem{lemma}[theorem]{Lemma}
\newtheorem{proposition}[theorem]{Proposition}
\newtheorem{remark}[theorem]{Remark}
\numberwithin{theorem}{section}
\newcommand{\bfa}{\mathbf{a}}
\newcommand{\bfb}{\mathbf{b}}
\newcommand{\bfc}{\mathbf{c}}
\newcommand{\bfd}{\mathbf{d}}
\newcommand{\bfe}{\mathbf{e}}
\newcommand{\bff}{\mathbf{f}}
\newcommand{\bfg}{\mathbf{g}}
\newcommand{\bfi}{\mathbf{i}}
\newcommand{\bfj}{\mathbf{j}}
\newcommand{\bfk}{\mathbf{k}}
\newcommand{\bfl}{\mathbf{l}}
\newcommand{\bfp}{\mathbf{p}}
\newcommand{\bft}{\mathbf{t}}
\newcommand{\bfv}{\mathbf{v}}
\newcommand{\bfw}{\mathbf{w}}
\newcommand{\bfX}{\mathbf{X}}
\newcommand{\bfx}{\mathbf{x}}
\newcommand{\bfy}{\mathbf{y}}
\newcommand{\cA}{\mathcal{A}}
\newcommand{\cC}{\mathcal{C}}
\newcommand{\cF}{\mathcal{F}}
\newcommand{\cK}{\mathcal{K}}
\newcommand{\cP}{\mathcal{P}}
\newcommand{\cT}{\mathcal{T}}
\newcommand{\FF}{\mathbb{F}}
\newcommand{\TT}{\mathbb{T}}
\newcommand{\ZZ}{\mathbb{Z}}
\newcommand{\Aut}{\operatorname{Aut}}
\newcommand{\coker}{\operatorname{coker}}
\renewcommand{\dim}{\operatorname{dim}}
\newcommand{\End}{\operatorname{End}}
\newcommand{\Ext}{\operatorname{Ext}}
\renewcommand{\ker}{\operatorname{ker}}
\newcommand{\Hom}{\operatorname{Hom}}
\newcommand{\Id}{\operatorname{Id}}
\newcommand{\im}{\operatorname{im}}
\renewcommand{\max}{\operatorname{max}}
\newcommand{\Mod}{\operatorname{Mod}}
\newcommand{\rad}{\operatorname{rad}}
\newcommand{\Rep}{\operatorname{Rep}}
\newcommand{\rep}{\operatorname{rep}}
\newcommand{\soc}{\operatorname{soc}}
\newcommand{\spa}{\operatorname{span}}
\newcommand{\supp}{\operatorname{supp}}
\newcommand{\Tr}{\operatorname{Tr}}
\newcommand{\half}{\frac{1}{2}}
\renewcommand{\eqref}[1]{{\rm (\ref{#1})}}
\title{Quantum Cluster Characters}
\author{Dylan Rupel}
\address{\noindent Department of Mathematics, University of Oregon, Eugene, OR 97403}
\email{drupel@uoregon.edu}
\begin{document}

\begin{abstract}
 Let $\FF$ be a finite field and $(Q,\bfd)$ an acyclic valued quiver with associated exchange matrix $\tilde{B}$.  We follow Hubery's approach \cite{hub1} to prove our main conjecture of \cite{rupel}: the quantum cluster character gives a bijection from the isoclasses of indecomposable rigid valued representations of $Q$ to the set of non-initial quantum cluster variables for the quantum cluster algebra $\cA_{|\FF|}(\tilde{B},\Lambda)$.  As a corollary we find that, for any rigid valued representation $V$ of $Q$, all Grassmannians of subrepresentations $Gr_\bfe^V$ have counting polynomials.
\end{abstract}

\keywords{quantum cluster algebra, valued quiver Grassmannian}

 \maketitle

 \makeatletter
 \renewcommand{\@evenhead}{\tiny \thepage \hfill  D.~RUPEL \hfill}

 \renewcommand{\@oddhead}{\tiny \hfill QUANTUM CLUSTER CHARACTERS \hfill \thepage}
 \makeatother

 \tableofcontents

\section{Introduction and Main Results}\label{sec:intro}

 Cluster algebras were introduced by Fomin and Zelevinsky \cite{fz1} in anticipation that cluster monomials would be contained in Lusztig's dual canonical basis for quantized coordinate rings of algebraic groups.  Thus a serious attempt was made to describe the cluster monomials.  In \cite{fz2}, they establish a simple bijection between the cluster variables and almost positive roots in an associated root system, thus proving the well known $A-G$ finite-type classification for cluster algebras.  

 It has been recognized \cite{gab},\cite{kac},\cite{ringel1} that the indecomposable representations of an associated (valued) quiver $Q$ were also in bijection with a certain root datum, namely the (strictly) positive roots.  To properly explain these bijections and fully understand the role of the negative simple roots, the authors of \cite{bmrrt} introduce the cluster category $\cC_Q$ in which the indecomposable representations are exactly in bijection with the almost positive roots.  Moreover, they observe that there is a bijection between the cluster-tilting objects of $\cC_Q$ and the clusters of the cluster algebra $\cA(Q)$.

 Extending this, Caldero and Chapoton \cite{caldchap} introduce cluster characters describing the initial cluster expansion of all cluster variables/monomials explicitly as generating functions of Euler characteristics of Grassmannians of submodules in the corresponding quiver representations.  Following the BMRRT approach, they show in finite types that the mutation operation for cluster-tilting objects coincides with the seed-mutation of Fomin and Zelevinsky, thus establishing a relationship between rigid objects of the cluster category and cluster monomials.  Then in \cite{caldkell} Caldero and Keller generalize these results to give cluster characters categorifying all cluster algebras with acyclic, skew-symmetric exchange matrix.  

 In both \cite{caldchap} and \cite{caldkell} the authors recognize a similarity between multiplication in the cluster algebra and the multiplication in the dual Hall algebra.  Building on these observations, Hubery in his preprint \cite{hub1} works toward extending the above results to the acyclic skew-symmetrizable case, replacing the category of representations of an acyclic quiver $Q$ with the hereditary category of finite dimensional modules over a species.  However, his approach follows a specialization argument requiring the existence of Hall polynomials, which is still an open conjecture.  Nevertheless the computations of \cite{hub1} are valid and play a vital role in the proof of our main theorem.  As a corollary to our main result we obtain the existence of counting polynomials for rigid quiver Grassmannians.  Hubery's specialization argument only requires that certain sums of Hall numbers, namely the number of points in the quiver Grassmannian, are given by polynomials.  Thus Corollary~\ref{poly_count} resolves Hubery's specialization argument.

 In \cite{rupel}, we define a quantum cluster character for the category $\rep_\FF(Q,\bfd)$ of representations of the valued quiver $(Q,\bfd)$ over the finite field $\FF$.  This character assigns to a valued representation $V$ of $Q$ the element $X_V$ in the quantum torus $\cT_{\Lambda,|\FF|}$, given by
 \begin{equation}\label{eq:qcc_formula}
  X_V=\sum\limits_{E\subset V} |\FF|^{-\half\langle E,V/E\rangle} X^{-[E]^*-{}^*[V/E]}
 \end{equation}
 where $\Lambda$ is compatible with $(Q,\bfd)$, $\cT_{\Lambda,|\FF|}$ is the quantum torus defined in section~\ref{sec:qca}, $\langle\cdot,\cdot\rangle$ denotes the Ringel-Euler form of the category $\rep_\FF(Q,\bfd)$, and ${}^*$ denotes certain left and right duals with respect to the Ringel-Euler form, see section~\ref{sec:valued_quiver}.  For a valued representation $V$ of $Q$ we will denote its class in the Grothendieck group $\cK(Q,\bfd)$ by the corresponding bold face lower case letter $\bfv$.  Clearly, formula~\eqref{eq:qcc_formula} is equivalent to 
 \[X_V=\sum\limits_{\bfe\in\cK(Q,\bfd)} |\FF|^{-\half\langle \bfe,\bfv-\bfe\rangle}|Gr_\bfe^V| X^{-\bfe^*-{}^*(\bfv-\bfe)}\]
 where $Gr_\bfe^V$ is the Grassmannian of subrepresentations in $V$ of type $\bfe$.  

 Here we present the first main result of the paper which was conjectured in \cite{rupel}.
 \begin{theorem}\cite[Conjecture 1.10]{rupel}\label{th:main}
  The quantum cluster character $V\mapsto X_V$ defines a bijection from exceptional valued $\FF$-representations $V$ of $Q$ to non-initial quantum cluster variables of the quantum cluster algebra $\cA_{|\FF|}(\tilde{B},\Lambda)$.
 \end{theorem}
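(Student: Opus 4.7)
The plan is to adapt Hubery's argument from \cite{hub1} to the quantum setting, establishing a quantum multiplication theorem for $X_U \cdot X_W$ and then matching the resulting relations to the exchange relations of $\cA_{|\FF|}(\tilde{B},\Lambda)$. The cornerstone is a quantum Caldero-Keller type formula; from this the bijection follows by an induction over mutation sequences.

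The first and most delicate step is to prove a quantum multiplication formula. Expanding the product $X_U \cdot X_W$ directly from \eqref{eq:qcc_formula}, one obtains a triple sum over subrepresentations $E \subset U$, $F \subset W$ together with the $\Lambda$-twist coming from the quantum torus commutation rule. Reorganizing this sum via the filtration structure on short exact sequences and applying Green's formula (together with the Riedtmann-Peng identity) should yield an identity of the form
\[
X_U \cdot X_W \;=\; |\FF|^{\alpha(U,W)} \sum_{[V]} \frac{|\Ext^1(U,W)_V|}{|\Hom(U,W)|}\, X_V,
\]
where the sum is over isoclasses $V$ fitting into a short exact sequence $0 \to W \to V \to U \to 0$, and $\alpha(U,W)$ is a quadratic expression in $\bfu,\bfw$ involving $\Lambda$ and the Euler form $\langle\cdot,\cdot\rangle$. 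Matching up these two quadratic forms — so that all quantum twists collapse to the simple $q$-integer coefficients expected from a quantum exchange relation — is the main technical hurdle, and requires the compatibility of $\Lambda$ with $(Q,\bfd)$ in an essential way.

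The second step is to apply this multiplication formula to rigid indecomposables. Given a rigid indecomposable $V$ realized as the middle term of an almost split sequence, or equivalently as an extension/coextension arising from a mutation in the cluster-tilted picture, one gets a two-term identity
\[
X_{V'} \cdot X_{V''} \;=\; q^{a} X_{V_1} \;+\; q^{b} X_{V_2},
\]
whose shape matches the quantum exchange relation of $\cA_{|\FF|}(\tilde{B},\Lambda)$ along the corresponding mutation direction. Running an induction on the distance from the initial cluster in the cluster complex, with the base case $V = P_i[1]$ (the shifted projectives, corresponding to the initial variables) handled by a direct computation from \eqref{eq:qcc_formula}, one concludes that every non-initial quantum cluster variable equals $X_V$ for some exceptional $V$. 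Surjectivity follows because all non-initial variables are reached by iterated mutation.

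For injectivity, the plan is to extract the denominator vector of $X_V$ as a Laurent polynomial in the initial cluster: inspecting \eqref{eq:qcc_formula} shows that the unique $\bfe$ contributing the most negative exponent is $\bfe = \bfv$, giving a denominator determined by ${}^*\bfv$, and hence by the class $\bfv \in \cK(Q,\bfd)$. Since exceptional valued representations are determined up to isomorphism by their dimension vectors (the corresponding positive real Schur roots being in bijection with isoclasses of exceptional modules), distinct $V$ produce distinct $X_V$. The main obstacle, as noted, lies in Step~1: transporting Hubery's classical Hall-algebra computations to the quantum torus so that the $\Lambda$-twists and the $|\FF|^{-\half\langle\cdot,\cdot\rangle}$ factors conspire to yield clean quantum exchange relations, which is precisely where the compatibility hypothesis on $\Lambda$ must be exploited.
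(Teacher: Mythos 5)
Your overall strategy (Hubery-style Hall-number computations, then an induction over mutation sequences) is the same as the paper's, but your Step~1 is mis-stated in a way that matters. The identity you propose,
\[
X_U\cdot X_W \;=\; |\FF|^{\alpha(U,W)}\sum_{[V]}\frac{|\Ext^1(U,W)_V|}{|\Hom(U,W)|}\,X_V ,
\]
a one-sided sum over middle terms of extensions $0\to W\to V\to U\to 0$, is not the formula that holds, and it cannot reproduce a quantum exchange relation: the exchange relation is a sum of \emph{two structurally different} terms, only one of which is the character of the unique non-split extension. The second term comes from the Auslander--Reiten dual picture, i.e.\ from the morphism $\theta\in\Hom(W,\tau V)$ with kernel $D$, cokernel $\tau A\oplus I$, and it has the form $X_{D\oplus A}*X^{{}^*\bfi}$ (Theorem~\ref{th:exchange_mult}); deriving it requires Lemma~\ref{lem:hom-hall} expressing $|\Hom(W,\tau V)_{DAI}|$ in Hall numbers, Green's formula \emph{and} the associativity \eqref{eq:hall-assoc}, under the hypotheses $\Hom(A\oplus D,I)=0=\Ext^1(A,D)$. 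Moreover, you omit entirely the second multiplication theorem (Theorem~\ref{th:proj_mult}) for products $X_WX^{{}^*\bfi}$ with $I$ injective, which is indispensable: it is the exchange relation for the mutations that change the support of the local tilting representation, i.e.\ the exchanges involving an \emph{initial} cluster variable. Without both theorems the induction cannot pass through every edge of the exchange graph.

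The second genuine gap is that your induction ``on distance from the initial cluster'' silently assumes the bookkeeping that is most of the paper's work: one must attach to each local tilting representation $T$ an exchange matrix $B_T$ (defined from the approximation data $[A]+[D]+{}^{*_P}[I]-[E]$, resp.\ $[F]+[P']^{*_P}-[G]-{}^{*_P}[I']$) and prove that representation-theoretic mutation induces Fomin--Zelevinsky matrix mutation (Theorem~\ref{th:tilt_matrix_mut}), and one must verify that the characters $\{X_{T_i}\}\cup\{X_i\}_{i\notin\supp T}$ actually form a quasi-commuting cluster with the correct commutation matrix (Theorems~\ref{th:qcc-comm} and \ref{th:init-comm}, using $\Ext^1$-vanishing and the disjoint-support condition). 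Only with these pieces does the two-term identity you write down in Step~2 identify with the Berenstein--Zelevinsky exchange relation of $\cA_{|\FF|}(\tilde B,\Lambda)$ at the correct seed (Theorem~\ref{th:tilt_mut}); your base case ``$V=P_i[1]$'' also steps outside the module-theoretic framework used here, where initial variables are handled as monomials $X^{{}^*\bfi}$ rather than shifted projectives. Your injectivity argument via leading exponents is fine in spirit (note, though, that $\bfe=\bfv$ contributes $X^{-\bfv^*}$, not ${}^*\bfv$), but as it stands the proposal's core multiplication formula is of the wrong shape and the tilting-to-seed dictionary is asserted rather than proved.
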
 

 The quantum cluster character was previously known \cite{rupel} to give cluster variables for all finite type valued quivers, for rank 2 valued quivers, and for those representations of acyclic valued quivers which can be obtained from simple representations by a sequence of reflection functors.  Qin \cite{qin} proved this conjecture for acyclic equally valued quivers (i.e. $d_i=1$ for all $i$).

 Note that there is a unique isomorphism class for each exceptional valued representation and thus the isomorphism classes of rigid objects in the Grothendieck group $\cK(Q,\bfd)$ are independent of the choice of ground field $\FF$.  Theorem~\ref{th:main} together with the quantum Laurent phenomenon from \cite{berzel} imply the following result.  
 \begin{corollary}\label{poly_count}
  Let $V$ be a rigid valued representation of $Q$.  Then for any $\bfe\in\cK(Q,\bfd)$ the Grassmannian $Gr_\bfe^V$ has a counting polynomial, which we denote by $P_\bfe^\bfv(q)$.
 \end{corollary}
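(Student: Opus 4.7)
The plan is to compare two different expansions of $X_V$ in $\cT_{\Lambda,|\FF|}$. On one side is the explicit formula \eqref{eq:qcc_formula}, whose coefficients are the point counts $|Gr_\bfe^V|$ weighted by an integer power of $|\FF|^{1/2}$; on the other is the Laurent expansion in the initial cluster produced by combining Theorem~\ref{th:main} with the quantum Laurent phenomenon of \cite{berzel}, whose coefficients are elements of $\ZZ[q^{\pm 1/2}]$ depending only on the combinatorial seed data $(\tilde B,\Lambda)$ and not on $\FF$. Matching the two expansions will exhibit $|Gr_\bfe^V|$ as the specialization at $q=|\FF|$ of a single polynomial.

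Concretely, I would fix the Grothendieck class $\bfv$ of the rigid representation $V$. By the observation recorded just before the corollary, for every finite field $\FF$ there is a unique isomorphism class of rigid $\FF$-representations of class $\bfv$; let $V_\FF$ denote a representative. By Theorem~\ref{th:main}, $X_{V_\FF}$ is a non-initial quantum cluster variable of $\cA_{|\FF|}(\tilde B,\Lambda)$, and the mutation sequence that produces this variable from the initial seed is determined purely combinatorially by $(\tilde B,\Lambda,\bfv)$ and hence is the same for every $\FF$. Applying the quantum Laurent phenomenon \cite{berzel} along this mutation sequence expresses $X_{V_\FF}$ as a Laurent polynomial in the initial quantum cluster with coefficients in $\ZZ[q^{\pm 1/2}]$; because these coefficients are the output of iterated quantum mutation from the initial seed, they are themselves $\FF$-independent formal expressions in $q^{\pm 1/2}$.

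To compare, I rewrite \eqref{eq:qcc_formula} in the form
\[X_{V_\FF} = \sum_{\bfe\in\cK(Q,\bfd)} |\FF|^{-\half\langle\bfe,\bfv-\bfe\rangle}\,|Gr_\bfe^{V_\FF}|\,X^{-\bfe^*-{}^*(\bfv-\bfe)}.\]
The map $\bfe\mapsto -\bfe^*-{}^*(\bfv-\bfe)$ is injective, since from the fixed class $\bfv$ the duals recover $\bfe$ uniquely, so the monomials on the right are pairwise distinct elements of the standard basis of $\cT_{\Lambda,|\FF|}$ and their coefficients are individually determined. Matching these coefficients with the field-independent coefficients produced by the Laurent expansion yields an identity of the form $|Gr_\bfe^{V_\FF}| = P_\bfe^\bfv(|\FF|)$, where $P_\bfe^\bfv(q)$ is a polynomial obtained from the Laurent data of the cluster variable and does not depend on $\FF$; this is the desired counting polynomial.

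The main (and essentially the only) delicate point is justifying the field-independence used in the matching: one must be careful that the combinatorial object ``Laurent expansion of the cluster variable corresponding to $\bfv$'' makes sense uniformly in $\FF$, and that the mutation recipe recovered from Theorem~\ref{th:main} is also $\FF$-uniform. Once the quantum Laurent phenomenon is read as a universal procedure depending only on $(\tilde B,\Lambda)$ and the mutation sequence — which is how it is established in \cite{berzel} — the coefficient comparison is forced and the corollary follows.
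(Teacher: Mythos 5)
Your proposal is correct and takes essentially the same route as the paper: Theorem~\ref{th:main} combined with the quantum Laurent phenomenon and a coefficient comparison in the quantum torus, with the field-uniformity of the specialization (which the paper delegates to \cite[Proposition 2.2.3]{qin}) supplied by your observation that the mutation sequence is purely combinatorial. The only step you gloss is the paper's final sentence: the matched coefficient a priori lies in $\ZZ[q^{\pm\half}]$, and one uses the integrality of $|Gr_\bfe^V|$ at all prime powers to conclude that $q^{\half\langle\bfe,\bfv-\bfe\rangle}L_\bfe^\bfv(q)$ is an honest polynomial in $q$.
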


 For equally valued quivers Qin \cite{qin} shows that these counting polynomials have nonnegative integer coefficients.  In a recent preprint \cite{ef}, Efimov shows that in addition the counting polynomials are unimodular, i.e. they are a shifted sum of bar-invariant $q$-numbers.  We conjecture that these properties always hold.
 \begin{conjecture}
  For any $\bfe$ and rigid $V$ the polynomial $P_\bfe^\bfv(q)\in\ZZ[q]$ is unimodular and has nonnegative integer coefficients.
 \end{conjecture}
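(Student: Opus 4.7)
The plan is to reduce the conjecture to the equally-valued case, where nonnegativity is due to Qin and unimodularity to Efimov, via a folding argument. Realize the acyclic valued quiver $(Q,\bfd)$ as a $G$-folding of an acyclic symmetric quiver $\tilde{Q}$, where $G$ is a finite cyclic group acting admissibly on $\tilde{Q}$ with orbit data recovering $(Q,\bfd)$. Over a sufficiently large extension $\FF'/\FF$, the category $\rep_\FF(Q,\bfd)$ is equivalent to the category of $G$-equivariant representations of $\tilde{Q}$ over $\FF'$, and a rigid valued representation $V$ lifts uniquely to a $G$-equivariant representation $\tilde{V}$ of $\tilde{Q}$, still rigid in the equivariant category.

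Next, I would identify $Gr_\bfe^V$ with the $G$-fixed locus of $Gr_{\tilde{\bfe}}^{\tilde{V}}$ for the natural lift $\tilde{\bfe}$ of $\bfe$ to the unfolded dimension lattice. In characteristic coprime to $|G|$, fixed loci of $G$-actions on smooth projective varieties are smooth and projective; since rigid ordinary quiver Grassmannians are known to be smooth (a standard tangent-space computation), $Gr_\bfe^V$ inherits smoothness and projectivity. A Bialynicki-Birula decomposition induced by a generic $G$-invariant one-parameter subgroup of $\Aut(\tilde{V})$ should then yield an affine cell decomposition of $Gr_\bfe^V$ whose $\FF_q$-point count expresses $P_\bfe^\bfv(q)$ as a positive sum of powers of $q$, giving nonnegativity. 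Unimodularity would then follow from hard Lefschetz applied to the restriction of a $G$-linearized ample line bundle, together with the purity arguments used by Efimov in the equally-valued setting.

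The hard part will be the folding step itself. One must verify that the counting-polynomial identity $P_\bfe^\bfv(q) = |Gr_\bfe^V(\FF_q)|$ is compatible with passing to $G$-fixed loci uniformly over all residue fields, and construct a cocharacter of $\Aut(\tilde{V})$ that is simultaneously $G$-invariant and sufficiently generic to have isolated fixed points. For valuations involving nontrivial Galois twists such a cocharacter need not exist, forcing one to work with an equivariant virtual cell decomposition or, alternatively, to attempt a direct species-theoretic tangent-space proof of smoothness of $Gr_\bfe^V$ for rigid $V$. Ensuring that Efimov's hard-Lefschetz argument extends equivariantly---so that the relevant ample class restricts amply to every connected component of the fixed locus, and that the weight filtration on the cohomology of the fixed locus remains pure---is the final technical hurdle and the place where genuinely new input beyond the symmetric case appears to be required.
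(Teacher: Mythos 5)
You are attempting to prove something the paper does not prove: this statement is explicitly left as a conjecture. The paper only establishes (Corollary~\ref{poly_count}) that the counting polynomial $P_\bfe^\bfv(q)$ \emph{exists} for rigid $V$, via Theorem~\ref{th:main} and the quantum Laurent phenomenon; nonnegativity and unimodularity are cited as known only in the equally valued case (Qin, Efimov) and are conjectured in general. So there is no proof in the paper to compare against, and any complete argument here would be new mathematics.

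As a program, your folding reduction has a genuine gap at its first step, beyond the issues you already flag. The relation between valued representations (species) and representations of the unfolded quiver is not that of an algebraic $G$-action with $\rep_\FF(Q,\bfd)$ as its fixed category over a big field: by Hubery's work \cite{hub2}, representations of the species over $\FF_q$ correspond to representations of the unfolded quiver over an extension field that are fixed by a \emph{Frobenius-semilinear} automorphism (a Galois twist), not by a linear algebraic action of a finite group $G$. Consequently $Gr_\bfe^V$ is a twisted form of (a union of pieces of) the unfolded Grassmannian rather than the fixed locus of a $G$-action on it, and its point count is a trace of Frobenius composed with the twisting automorphism on cohomology. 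Positivity and unimodularity of the unfolded counting polynomial (Qin, Efimov) therefore do not transfer: even granting smoothness, purity, and an affine paving upstairs, the twisted trace can introduce signs, the cells need not be permuted compatibly or be defined over $\FF_q$, and ``smoothness of fixed loci in characteristic prime to $|G|$'' is not the relevant statement. Together with the unresolved existence of a suitable $G$-invariant cocharacter for the Bialynicki-Birula argument and the unverified rigidity transfer to the unfolded representation, this leaves the proposal as an outline of a possible attack on an open problem rather than a proof; the step you call the ``hard part'' is indeed where the argument currently fails.
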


 Formula~\eqref{eq:qcc_formula} makes sense for any category $\cC$ that has finitely many simple objects and any object $V$ of $\cC$ which has finitely many subobjects.  Therefore we hope that by choosing an appropriate category, e.g. a cluster category or representations of a valued quiver with potential, this formula will give all quantum cluster variables and monomials.  In this case, the same machinery would imply polynomiality of the cardinality of Grassmannians in the category $\cC$. 

 While this manuscript was in its final stages of preparation Ding and Sheng \cite{dingsheng} proved multiplication theorems for quantum cluster characters extending those of \cite{dingxu}.  In this work Ding and Sheng deduce results about bases of quantum cluster algebras of finite type or rank 2.  We are curious what can be said about bases in more general acyclic quantum cluster algebras.

 The paper is organized as follows.  Section~\ref{sec:valued_quiver} contains definitions and results related to valued quivers and section~\ref{sec:qca} recalls the definition of quantum cluster algebras.  Sections \ref{sec:tilting}, \ref{sec:tilt_matrix}, and \ref{sec:exch_mut} recall Hubery's results on mutations of local tilting representations and associated exchange matrices.  In section~\ref{sec:mult_theorems}, we prove multiplication formulas similar to those from \cite{caldkell}, \cite{hub1}, \cite{qin}, and \cite{dingxu}.  The associated commutation matrix is computed in section~\ref{sec:comm}.

 \subsection{Notation}  We always use bold face letters to denote an $n$-tuple of integers.  Also if a valued representation has been named, we will use the corresponding boldface lowercase letter to denote it's dimension vector, e.g. the dimension vector of $V$ is $\bfv=(v_1,\ldots,v_n)$.

\section{Acknowledgements}

 The author would like to thank his PhD advisor Arkady Berenstein for encouraging him to write this paper.  A portion of this article was written while the author was a guest at the Banff International Research Station, he would like to offer his sincerest gratitude to BIRS for the invitation to work in such a beautiful setting.  The author would also like to thank Hugh Thomas and Ralf Schiffler for stimulating discussions related to cluster categories.

\section{Valued Quiver Representations}\label{sec:valued_quiver}

 Let $\FF$ be a finite field and write $\overline{\FF}$ for an algebraic closure of $\FF$.  For each positive integer $k$ denote by $\FF_k$ the degree $k$ extension of $\FF$ in $\overline{\FF}$.  Note that the largest subfield of $\overline{\FF}$ contained in both $\FF_k$ and $\FF_\ell$ is $\FF_{gcd(k,\ell)}=\FF_k\cap\FF_\ell$.  If $k|\ell$ we will fix a basis of $\FF_\ell$ over $\FF_k$ and thus we may freely identify $\FF_\ell$ as a vector space over $\FF_k$.

 Fix an integer $n$ and let $Q=\{Q_0, Q_1, s, t\}$ be a quiver with vertices $Q_0=\{1,2,\ldots, n\}$ and arrows $Q_1$, where we denote by $s(a)$ and $t(a)$ the source and target of an arrow 
 \begin{center}
  \begin{tikzpicture}
   \matrix (m) [matrix of math nodes, row sep=1em, column sep=2.5em, text height=1.5ex, text depth=0.25ex]
    {s(a) & t(a)\,.\\};
   \path[->]
    (m-1-1) edge node[auto] {$a$} (m-1-2);
  \end{tikzpicture}
 \end{center}
 Let $\bfd: Q_0\to \ZZ_{>0}$ be a collection of valuations associated to the vertices of $Q$, where we denote the image of a vertex $i$ by $d_i$.  We will call the pair $(Q,\bfd)$ a ``valued quiver''.  Since the valuations will be fixed for all time we will sometimes drop the $\bfd$ from our notation.

 Define a representation $V=(\left\{V_{i}\right\}_{i\in Q_0},\left\{\varphi_{a}\right\}_{a\in Q_1})$ of $(Q,\bfd)$, or a ``($\bfd$-)valued representation'' of $Q$, by assigning an $\FF_{d_i}$-vector space $V_i$ to each vertex $i\in Q_0$ and an $\FF_{gcd(d_{s(a)}, d_{t(a)})}$-linear map
 \begin{center}
  \begin{tikzpicture}
   \matrix (m) [matrix of math nodes, row sep=1em, column sep=2.5em, text height=1.5ex, text depth=0.25ex]
    {\varphi_a\ :\ V_{s(a)} & V_{t(a)}\\};
   \path[->]
    (m-1-1) edge (m-1-2);
  \end{tikzpicture}
 \end{center} 
 to each arrow $a\in Q_1$.  Let $W=(\left\{W_{i}\right\}_{i\in Q_0}, \left\{\psi_{a}\right\}_{a\in Q_1})$ denote another valued representation of $Q$ and define a morphism
 \begin{center}
  \begin{tikzpicture}
   \matrix (m) [matrix of math nodes, row sep=1em, column sep=2.5em, text height=1.5ex, text depth=0.25ex]
    {\theta\ :\ V & W\\};
   \path[->]
    (m-1-1) edge (m-1-2);
  \end{tikzpicture}
 \end{center} 
 to be a collection $\theta=\{\theta_i\in \Hom_{\FF_{d_i}}(V_i,W_i)\}_{i\in Q_0}$ such that $\theta_{t(a)}\circ\varphi_a=\psi_a\circ\theta_{s(a)}$, i.e. the following diagram commutes, for all $a\in Q_1$:
 \begin{center}
  \begin{tikzpicture}
   \matrix (m) [matrix of math nodes, row sep=3em,column sep=2.5em, text height=1.5ex, text depth=0.25ex]
    {V_{s(a)} & V_{t(a)}\\ W_{s(a)} & W_{t(a)}\,.\\};
   \path[->,font=\scriptsize]
    (m-1-1) edge node[auto] {$\varphi_a$} (m-1-2) edge node[auto] {$\theta_{s(a)}$} (m-2-1)
    (m-1-2) edge node[auto] {$\theta_{t(a)}$} (m-2-2)
    (m-2-1) edge node[auto] {$\psi_a$} (m-2-2);  
  \end{tikzpicture}
 \end{center}
 Thus we have a category $\Rep_\FF(Q,\bfd)$ of all finite dimensional valued representations of $Q$.  For valued representations $V$ and $W$ we define their direct sum via 
 \[V\oplus W=(\left\{V_i\oplus W_i\right\}_{i\in Q_0},\left\{\varphi_a\oplus\psi_a\right\}_{a\in Q_1}).\]
 Then one easily checks that $\Rep_\FF(Q,\bfd)$ is an Abelian category where kernels and cokernels are taken vertex-wise.  

 For each $i\in Q_0$ denote by $S_i$ the simple valued representation associated to vertex $i$, i.e. we assign the vector space $\FF_{d_i}$ to vertex $i$ and the zero vector space to every other vertex.  We will write $\cK(Q)$ for the Grothendieck group of $\Rep_\FF(Q,\bfd)$, that is $\cK(Q)$ is the free Abelian group generated by the isomorphism classes $[V]$ for $V\in\Rep_\FF(Q,\bfd)$ subject to the relations $[V]=[U]+[W]$ whenever there is a short exact sequence
 \begin{center}
  \begin{tikzpicture}
   \matrix (m) [matrix of math nodes, row sep=1em, column sep=2.5em, text height=1.5ex, text depth=0.25ex]
    {0 & U & V & W & 0\,.\\};
   \path[->]
    (m-1-1) edge (m-1-2)
    (m-1-2) edge (m-1-3)
    (m-1-3) edge (m-1-4)
    (m-1-4) edge (m-1-5);
  \end{tikzpicture}
 \end{center}
 From now on we assume that the quiver $Q$ is acyclic, that is $Q$ contains no non-trivial paths which begin and end at the same vertex.  Let $\alpha_i\in\cK(Q)$ denote the class of simple valued representation $S_i$.  Every representation $V\in\Rep_\FF(Q,\bfd)$ has a finite filtration with simple quotients, so we may write $[V]$ as a linear combination of the $\alpha_i$.  Thus we may identify $[V]$ with the ``dimension vector'' of $V$: $\bfv=(\dim_{\FF_{d_i}} V_i)_{i=1}^n\in\ZZ^n$, where we adopt the convention that for any named representation we will use the same bold face letter to denote its dimension vector.  In particular, taking the $\alpha_i$ as a basis we may identify $\cK(Q)$ with the free Abelian group $\ZZ^n$.

 Now we introduce some terminology that will be useful for describing valued representations.  A valued representation $V$ is called ``indecomposable'' if $V=U\oplus W$ implies $U=0$ or $W=0$.  The category $\Rep_\FF(Q,\bfd)$ is Krull-Schmidt, that is every representation can be written uniquely as a direct sum of indecomposable representations.  A representation $V$ will be called ``basic'' if each indecomposable summand of $V$ appears with multiplicity one.  We will call $V$ ``rigid'' if $\Ext(V,V)=0$.  If $V$ is both rigid and indecomposable then we will say $V$ is ``exceptional''.  Our main concern will be with basic rigid representations.  In particular, we are most interested in basic rigid representations $V$ satisfying the following locality condition: the number of vertices in the ``support'' of $V$, $\supp(V)=\{i\in Q_0 | V_i\ne0\}$, is equal to the number of indecomposable summands of $V$.  These representations will be called ``local tilting representations''.  A representation V is called ``sincere'' if $\supp(V)=Q_0$.  A sincere local tilting representation will simply be called a ``tilting representation''.  We justify this terminology in Section~\ref{sec:tilting}.  Note that we may identify an insincere representation $V$ with a sincere representation of the full subquiver $Q^V$ of $Q$ with vertices $Q^V_0=\supp(V)$.  Then a local tilting representation $V$ may be considered as a tilting representation for $Q^V$, this explains the adjective local. 

 Just as with ordinary quivers it is useful to consider an equivalent category of modules over the path algebra.  The analog of the path algebra for valued quivers is the notion of an $\FF$-species.  To describe this correspondence we will need to introduce some notation.  For $i,j\in Q_0$ denote by $n_{ij}$ the number of arrows connecting vertices $i$ and $j$ and note that these arrows are either all of the form $i\to j$ or all of the form $j\to i$.  Define a matrix $B_Q=B_{(Q,\bfd)}=(b_{ij})$ by 
 \[b_{ij}=\begin{cases} n_{ij}d_j/gcd(d_i,d_j) & \text{ if $i\to j\in Q_1$;}\\-n_{ij}d_j/gcd(d_i,d_j) & \text{ if $j\to i\in Q_1$;}\\ 0 & \text{ otherwise.}\end{cases}\]
 Let $d_{ij}=gcd(d_i,d_j)$ and $d^{ij}=lcm(d_i,d_j)$.  Notice that since $d_id_j/d_{ij}=d^{ij}$ we have $d_ib_{ij}=n_{ij}d^{ij}=-d_jb_{ji}$ whenever $i\to j\in Q_1$.  Thus we see that $DB_Q$ is skew-symmetric where $D=diag(d_i)$, in other words $B_Q$ is ``skew-symmetrizable''.  
 Note that we may recover $Q$ from the matrix $B_Q$ as the quiver with $gcd(|b_{ij}|,|b_{ji}|)$ arrows from $i$ to $j$ whenever $b_{ij}>0$.

 Now we are ready to define an $\FF$-species $\Gamma_Q$ whose modules are the same as valued representations of $Q$.  Define $\Gamma_0=\prod_{i=1}^n \FF_{d_i}$ and $\Gamma_1=\bigoplus_{b_{ij}>0} \Gamma_{ij}$ where $\Gamma_{ij}:=\FF_{d_ib_{ij}}$.  We define an $\FF_{d_i}-\FF_{d_j}$-bimodule structure on $\Gamma_{ij}$ by setting $\Gamma_{ij}=\FF^{d_{ij}}\otimes_{\FF} \FF_{\delta^{ij}}\cong\bigoplus_{k=1}^{d_{ij}} \FF_{\delta^{ij}}$, where $\frac{d_ib_{ij}}{\delta^{ij}}=gcd(|b_{ij}|,|b_{ji}|)=d_{ij}$.  Notice that $\FF_{\delta^{ij}}$ contains both $\FF_{d_i}$ and $\FF_{d_j}$ and thus we have such a bimodule structure.  Thus we may consider $\Gamma_1$ as a bimodule over the semisimple algebra $\Gamma_0$ and form the tensor algebra $\Gamma_Q=T_{\Gamma_0}(\Gamma_1)$ of $\Gamma_1$ over $\Gamma_0$.  

 A module $X$ over $\Gamma_Q$ is given by an $\FF_{d_i}$-vector space $X_i$ for each vertex $i$ and an $\FF_{d_j}$-linear map $\theta^X_{ij}: X_i\otimes_{\FF_{d_i}} \Gamma_{ij} \to X_j$ whenever $b_{ij}>0$, see \cite{hub2} for more details.  A morphism of $\Gamma_Q$-modules $f:X\to Y$ is a collection $\{f_i\}_{i\in Q_0}$, with $f_i:X_i\to Y_i$ an $\FF_{d_i}$-linear map, such that $\theta_{ij}^Y(f_i\otimes id)=f_j\theta_{ij}^X$.  

 \begin{proposition}\cite{rupel}\label{prop:equiv}
  The categories $\Rep_{\FF} (Q,\bfd)$ and $\Mod~\Gamma_Q$ are equivalent.
 \end{proposition}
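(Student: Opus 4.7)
The plan is to exhibit explicit quasi-inverse functors $F:\Rep_\FF(Q,\bfd)\to\Mod~\Gamma_Q$ and $G:\Mod~\Gamma_Q\to\Rep_\FF(Q,\bfd)$ that agree with the identity on the underlying vertex data $(V_i)_{i\in Q_0}$ of $\FF_{d_i}$-vector spaces. All of the content of the equivalence is then concentrated in translating the arrow data, because a morphism on either side is, by definition, a family $(\theta_i)$ of $\FF_{d_i}$-linear maps intertwining the respective arrow data; the translation of arrow data will be $\FF_{d_i}$-linear in $V_i$ and $\FF_{d_j}$-linear in $V_j$ by construction, so morphism compatibility is automatic.

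The heart of the proof is a canonical, natural bijection between, on one hand, an $n_{ij}$-tuple $(\varphi_a)_{a:i\to j}$ of $\FF_{d_{ij}}$-linear maps $V_i\to V_j$, and, on the other hand, an $\FF_{d_j}$-linear map $\theta_{ij}:V_i\otimes_{\FF_{d_i}}\Gamma_{ij}\to V_j$. By the tensor-hom adjunction, the second datum is equivalent to a homomorphism of $\FF_{d_i}$-$\FF_{d_j}$-bimodules
\[\Gamma_{ij}\ \longrightarrow\ \Hom_\FF(V_i,V_j).\]
I would then use the stated decomposition $\Gamma_{ij}\cong\bigoplus_{k=1}^{d_{ij}}\FF_{\delta^{ij}}$, together with the ring isomorphism $\FF_{d_i}\otimes_\FF\FF_{d_j}\cong\FF_{d^{ij}}^{d_{ij}}$ (obtained by factoring a minimal polynomial for a generator of $\FF_{d_i}$ over $\FF_{d_j}$ into $d_{ij}$ irreducibles of common degree $d^{ij}/d_j$), to see that an $\FF_{d_i}$-$\FF_{d_j}$-bimodule map out of $\Gamma_{ij}$ is determined by its restriction to each of the $d_{ij}$ block summands, and that these restrictions are in turn pinned down by a total of $n_{ij}$ independent $\FF_{d_{ij}}$-linear maps $V_i\to V_j$. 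Fixing once and for all a bimodule basis of $\Gamma_{ij}$ aligned with the $d_{ij}$ summands makes the bijection canonical, and both $\FF$-dimension counts equal $n_{ij}d^{ij}=d_ib_{ij}$.

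Given this bijection on arrow data, $F$ and $G$ are defined in the obvious way: $F$ converts an $n_{ij}$-tuple $(\varphi_a)$ into the unique $\theta_{ij}$ corresponding to it, and $G$ does the reverse. Functoriality is immediate from naturality of the adjunction and of the bimodule decomposition in $(V_i,V_j)$, and both compositions $FG$ and $GF$ are the identity on vertex data and on arrow data by construction, hence quasi-inverse equivalences. The main obstacle is purely bookkeeping: one has to juggle the relations $d_ib_{ij}=n_{ij}d^{ij}$, $d^{ij}=d_id_j/d_{ij}$, and $\delta^{ij}=d_ib_{ij}/d_{ij}$ to align the dimension counts and to verify that the bimodule structure coming from the species decomposition of $\Gamma_{ij}$ really does correspond to an $n_{ij}$-fold direct sum of ``single-arrow'' bimodules at the level of bimodule homomorphisms. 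Once this numerical bookkeeping is done, the proposition follows by direct inspection of the $\Gamma_Q$-module axioms.
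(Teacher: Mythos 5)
The paper itself offers no proof of Proposition~\ref{prop:equiv} -- it is quoted from \cite{rupel} -- so the comparison is with the standard argument, which is indeed the one you outline: the functors are the identity on the vertex spaces, and the arrow data are matched through the tensor--hom adjunction $\Hom_{\FF_{d_j}}(V_i\otimes_{\FF_{d_i}}\Gamma_{ij},V_j)\cong\Hom_{\FF_{d_i}\otimes_\FF\FF_{d_j}}(\Gamma_{ij},\Hom_\FF(V_i,V_j))$, with functoriality coming from naturality. Your overall route is the right one.

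The step that needs repair is the middle one, where you identify bimodule maps out of $\Gamma_{ij}$ with $n_{ij}$-tuples of $\FF_{d_{ij}}$-linear maps by aligning the summands of $\Gamma_{ij}$ with the $d_{ij}$ blocks of $\FF_{d_i}\otimes_\FF\FF_{d_j}\cong\FF_{d^{ij}}^{d_{ij}}$ and saying a bimodule map is ``determined by its restrictions to the $d_{ij}$ block summands, pinned down by $n_{ij}$ linear maps.'' As written the two numbers and the two decompositions do not match, and the block picture by itself points the wrong way: the $d_{ij}$ blocks are indexed by the Galois twists $\sigma\in\mathrm{Gal}(\FF_{d_{ij}}/\FF)$, and a bimodule map from the $\sigma$-twisted simple into $\Hom_\FF(V_i,V_j)$ corresponds to a $\sigma$-\emph{semilinear} map $V_i\to V_j$, not an $\FF_{d_{ij}}$-linear one; equality of $\FF$-dimensions cannot detect this. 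What makes the proposition true is that $\Gamma_{ij}$ is concentrated entirely in the untwisted block: with the fixed embeddings $\FF_{d_i},\FF_{d_j}\subset\FF_{d^{ij}}$ one has $\Gamma_{ij}\cong\bigoplus_{a=1}^{n_{ij}}\FF_{d^{ij}}$ as bimodules (note $d_ib_{ij}=n_{ij}d^{ij}$ and $\gcd(|b_{ij}|,|b_{ji}|)=n_{ij}$; the displayed decomposition in Section~\ref{sec:valued_quiver} with $d_{ij}$ summands of $\FF_{\delta^{ij}}$ is a slip which your sketch inherited), and each untwisted summand contributes exactly one $\FF_{d_{ij}}$-linear map, namely the value of the bimodule map at $1$: since $1$ generates $\FF_{d^{ij}}$ over $\FF_{d_i}\otimes_\FF\FF_{d_j}$, such a map exists for precisely those $f\in\Hom_\FF(V_i,V_j)$ killed by the annihilator of $1$, i.e.\ the $\FF_{d_{ij}}$-linear ones. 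With that correction -- restriction to the $n_{ij}$ untwisted simple summands rather than to the $d_{ij}$ blocks -- your argument, including the routine check that the two intertwining conditions on morphisms correspond, goes through; this untwistedness of $\Gamma_{ij}$ is the one genuinely non-bookkeeping point and deserves an explicit verification rather than a dimension count.
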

 \noindent Thus we may apply all results concerning $\FF$-species to valued representations.

 It follows from Proposition~\ref{prop:equiv} that $\Rep_\FF(Q,\bfd)$ is ``hereditary'', that is for all $V,W\in\Rep_\FF(Q,\bfd)$ we have $\Ext^i(V,W)=0$ for every $i\ge2$.  For valued representations $V,W\in\Rep_\FF(Q,\bfd)$ we define $\langle V,W\rangle = \dim_\FF \Hom(V,W)-\dim_\FF\Ext^1(V,W)$.  Using the long exact sequence on $\Ext$ coming from a short exact sequence of representations, it is easy to see that $\langle V,W\rangle$ only depends on the classes of $V$ and $W$ in $\cK(Q)$.  Thus we obtain a bilinear form $\langle\cdot,\cdot\rangle:\cK(Q)\times\cK(Q)\to\ZZ$ known as the ``Ringel-Euler form''.  Note that by the bilinearity of the Ringel-Euler form, it suffices to compute it on the basis of $\cK(Q)$:
 \begin{equation}\label{eq:euler}
  \langle\alpha_i,\alpha_j\rangle=\begin{cases} d_i & \text{ if $i=j$,}\\ -[d_ib_{ij}]_+ & \text{ if $i\ne j$}, \end{cases}
 \end{equation}
 where we write $[b]_+=\max(0,b)$.  

 We will write $\alpha_i^\vee=\frac{1}{d_i}\alpha_i$ and remark that the skew-symmetrizability of $B_Q$ implies $\langle\alpha_i^\vee,\alpha_j\rangle$ and $\langle\alpha_i,\alpha_j^\vee\rangle$ are integers for all $i$ and $j$. Then for $\bfe \in \cK(Q)$ define ${}^*\bfe,\bfe^*\in \cK(Q)$ by 
 \[{}^*\bfe=\sum_{i=1}^n \langle \alpha_i^\vee,\bfe\rangle\alpha_i,~\bfe^*=\sum_{i=1}^n \langle\bfe,\alpha_i^\vee\rangle \alpha_i.\]
 We define two matrices $B_Q^-=(b_{ij}^-)$ and $B_Q^+=(b_{ij}^+)$ as follows:
 \begin{align*}
  b_{ij}^-&=\langle\alpha_i^\vee,\alpha_j\rangle=\begin{cases} 1 & \text{ if $i=j$;}\\ -[b_{ij}]_+ & \text{ if $i\ne j$;} \end{cases}\\
  b_{ij}^+&=\langle\alpha_j,\alpha_i^\vee\rangle=\begin{cases} 1 & \text{ if $i=j$;}\\ -[-b_{ij}]_+ & \text{ if $i\ne j$.} \end{cases}
 \end{align*}
 It is clear from the definitions that viewing $\bfe$ as an element of $\ZZ^n$ we have
 \begin{equation}\label{B_matrices}
  {}^*\bfe=B_Q^-\bfe \text{ and } \bfe^*=B_Q^+\bfe.
 \end{equation}
 Also note from equation~\eqref{eq:euler} and the skew-symmetrizability of $B_Q$ that we have
 \[B_Q^+=D(B_Q^-)^tD^{-1} \text{ and } B_Q^+-B_Q^-=B_Q.\] 

 Suppose there exists an $n\times n$ skew-symmetrizable matrix $\Lambda=(\lambda_{ij})$ such that $B_Q^t\Lambda=D$ and write $\Lambda(\cdot,\cdot):\ZZ^n\times\ZZ^n\to \ZZ$ for the associated skew-symmetric bilinear form.  As in Zelevinsky's Oberwolfach talk on quantum cluster algebras \cite{zel}, we may always replace the quiver $Q$ by $\tilde{Q}$, where we attach principal frozen vertices to the valued quiver $Q$, to guarantee that such a matrix $\Lambda$ exists.  Note that the compatibility condition for $B_Q$ and $\Lambda$ implies $\Lambda(\bfb^i,\alpha_j)=d_i\delta_{ij}$ where $\bfb^i$ denotes the $i^{th}$ column of $B_Q$.  

 For a valued representation $V$ define the ``socle'' $\soc V$ to be the sum of all simple subrepresentations of $V$ and the ``radical'' $\rad V$ to be the intersection of all maximal subrepresentations of $V$.  We record the following identities for use in Section~\ref{sec:qcc}. 
 \begin{lemma}\label{lem:identities}\mbox{}
  \begin{enumerate}
   \item \label{id2} For any $\bfc\in\cK(Q)$, $\Lambda(\bfb^i,{}^*\bfc)=\langle \alpha_i,\bfc\rangle$ and $\Lambda(\bfc^*,\bfb^j)=-\langle \bfc,\alpha_j\rangle$.
   \item \label{id3} For any $\bfb,\bfc\in\cK(Q)$, $\Lambda(\bfb^*-{}^*\bfb,\bfc^*-{}^*\bfc)=\langle\bfc,\bfb\rangle-\langle\bfb,\bfc\rangle$.
   \item \label{id4} For any $\bfb,\bfc,\bfv,\bfw\in\cK(Q)$. $\Lambda(-\bfb^*-{}^*(\bfv-\bfb),-\bfc^*-{}^*(\bfw-\bfc))=\Lambda({}^*\bfv,{}^*\bfw)-\langle\bfc,\bfv-\bfb\rangle+\langle\bfb,\bfw-\bfc\rangle$.
   \item \label{id5} For any injective valued representation $I$, $[\soc I]={}^*\bfi$, where $\bfi=[I]\in\cK(Q)$.
   \item \label{id6} For any projective valued representation $P$, $[P/\rad P]=\bfp^*$, where $\bfp=[P]\in\cK(Q)$.
  \end{enumerate}
 \end{lemma}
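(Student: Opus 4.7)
All five identities reduce to bookkeeping from the compatibility relation $B_Q^t\Lambda=D$, which entry-wise reads $\Lambda(\bfb^i,\alpha_j)=d_i\delta_{ij}$, together with the definitions of ${}^*(\,\cdot\,)$ and $(\,\cdot\,)^*$. For (1), I expand ${}^*\bfc=\sum_k\langle\alpha_k^\vee,\bfc\rangle\alpha_k$ against the compatibility relation, and the factors of $d_i$ collapse via $\alpha_i^\vee=\alpha_i/d_i$ to give $\langle\alpha_i,\bfc\rangle$; the second half follows by the analogous expansion of $\bfc^*$ combined with the skew-symmetry of $\Lambda$.

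For (2), the pivotal observation is that $\bfb^*-{}^*\bfb=(B_Q^+-B_Q^-)\bfb=B_Q\bfb=\sum_i b_i\bfb^i$ by \eqref{B_matrices}. Applying (1) in each argument yields $\Lambda(\bfb^i,\bfc^*-{}^*\bfc)=\langle\bfc,\alpha_i\rangle-\langle\alpha_i,\bfc\rangle$, and summing against the $b_i$ delivers the bilinear-form difference. For (3), I rewrite the arguments as $-{}^*\bfv-(\bfb^*-{}^*\bfb)$ and $-{}^*\bfw-(\bfc^*-{}^*\bfc)$ and expand $\Lambda$ bilinearly into four pieces. The cross terms $\Lambda({}^*\bfv,\bfc^*-{}^*\bfc)=-\langle\bfc,\bfv\rangle$ and $\Lambda(\bfb^*-{}^*\bfb,{}^*\bfw)=\langle\bfb,\bfw\rangle$ fall to (1), the remaining term is (2), and the single term $\Lambda({}^*\bfv,{}^*\bfw)$ survives; assembling these produces the claimed formula.

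Parts (4) and (5) are homological and additive over direct sums, so it is enough to check them on the indecomposables $I_i$ and $P_i$. Injectivity of $I_i$ kills $\Ext^1(-,I_i)$, so $\langle\alpha_j,[I_i]\rangle=\dim_\FF\Hom(S_j,I_i)$; since $\soc I_i=S_i$, this hom space is $\delta_{ij}\End(S_i)\cong\delta_{ij}\FF_{d_i}$ of $\FF$-dimension $d_i\delta_{ij}$. Passing from $\alpha_j$ to $\alpha_j^\vee$ gives $\langle\alpha_j^\vee,[I_i]\rangle=\delta_{ij}$, whence ${}^*[I_i]=\alpha_i=[\soc I_i]$. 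Part (5) is dual, using the natural isomorphism $\Hom(P_i,M)\cong M_i$ of $\FF_{d_i}$-modules to compute $\langle[P_i],\alpha_j^\vee\rangle=\delta_{ij}$, whence $[P_i]^*=\alpha_i=[P_i/\rad P_i]$.

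The only real obstacle throughout is combinatorial: tracking the scaling between $\alpha_i$ and $\alpha_i^\vee$, and respecting the sign conventions of the skew-symmetric form $\Lambda$. No conceptual difficulty arises beyond the compatibility relation and the standard homological properties of projectives and injectives over the species $\Gamma_Q$.
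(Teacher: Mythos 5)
Your proof is correct and follows essentially the same route as the paper: part (1) from the compatibility relation $B_Q^t\Lambda=D$, parts (2)--(3) from \eqref{B_matrices} and part (1) via $\bfb^*-{}^*\bfb=B_Q\bfb$, and parts (4)--(5) by reducing to the indecomposable injective hulls and projective covers of the simples. You have merely filled in the bookkeeping that the paper's one-line proof leaves implicit, so no further comment is needed.
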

 \begin{proof}
 The identities in \eqref{id2} are a direct consequence of the compatibility of $B_Q$ and $\Lambda$.  The identity in \eqref{id3} follows immediately from \eqref{B_matrices} and \eqref{id2}.  The identity in \eqref{id4} can easily be obtained from \eqref{id2} and \eqref{id3}.  There is a unique injective hull and projective cover for each simple representation, the identities in \eqref{id5} and \eqref{id6} follow.
 \end{proof} 

 Now we recall useful functors acting on $\Mod\Gamma_Q$.  By the equivalence of categories in Proposition~\ref{prop:equiv} these functors can be transported to the category $\Rep_\FF(Q,\bfd)$.

 Let $D:\Mod\Gamma_Q\to\Mod\Gamma_{Q^{op}}$ denote the standard $\FF$-linear duality, that is $D=\Hom_\FF(-,\FF)$, where $Q^{op}$ denotes the quiver obtained from $Q$ by reversing all arrows.  We define the ``Nakayama functor'' $\nu:\Mod\Gamma_Q\to\Mod\Gamma_Q$ to be the composition $D\Hom_{\Gamma_Q}(-,\Gamma_Q)$.  It is well-known that this functor defines an equivalence of categories from the full subcategory of $\Mod\Gamma_Q$ consisting of projective objects to the full subcategory consisting of injective objects.

 Our goal is to define the Auslander-Reiten translation.  Let $V$ be a module over $\Gamma_Q$ and consider the minimal projective resolution
 \begin{center}
  \begin{tikzpicture}
   \matrix (m) [matrix of math nodes, row sep=1em, column sep=2.5em, text height=1.5ex, text depth=0.25ex]
    {0 & P_1 & P_0 & V & 0\,.\\};
   \path[->]
    (m-1-1) edge (m-1-2)
    (m-1-2) edge (m-1-3)
    (m-1-3) edge (m-1-4)
    (m-1-4) edge (m-1-5);
  \end{tikzpicture}
 \end{center}
 The ``Auslander-Reiten translation'' $\tau=D\Tr:\Mod\Gamma_Q\to \Mod\Gamma_Q$ is defined on a representation $V$ via the following induced diagram:
 \begin{center}
  \begin{tikzpicture}
   \matrix (m) [matrix of math nodes, row sep=1em, column sep=2.5em, text height=1.5ex, text depth=0.25ex]
    {0 & \Hom(P_0,\Gamma_Q) & \Hom(P_1,\Gamma_Q) & \Tr(V) & 0\,.\\};
   \path[->]
    (m-1-1) edge (m-1-2)
    (m-1-2) edge (m-1-3)
    (m-1-3) edge (m-1-4)
    (m-1-4) edge (m-1-5);
  \end{tikzpicture}
 \end{center}
 The Auslander-Reiten translation defines an equivalence between the full subcategories $\Mod_P\Gamma_Q$ and $\Mod_I\Gamma_Q$ consisting of objects with no projective summands and no injective summands respectively.  The following ``Auslander-Reiten formulas'' will be essential in the computations to follow.
 \begin{proposition}\label{prop:AR}
  For any representations $V,W$, there exist isomorphisms:
  \begin{align}
   \label{eq:AR} D\Hom(V,\tau W)\cong\Ext^1(W,V),\\
   \nonumber D\Ext^1(V,\tau W)\cong\Hom(W,V).
  \end{align}
  In particular, we have $\langle \bfv,\tau\bfw\rangle=-\langle\bfw,\bfv\rangle$.
 \end{proposition}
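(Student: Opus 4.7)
The plan is to follow the standard proof of Auslander--Reiten duality over a hereditary algebra, tailored to the species $\Gamma_Q$. The starting point is the minimal projective resolution
\[0\to P_1\to P_0\to W\to 0,\]
which terminates after one step because $\Gamma_Q$ is hereditary. Applying the contravariant functor $\Hom_{\Gamma_Q}(-,\Gamma_Q)$ yields, by the very definition of $\Tr W$, the exact sequence
\[0\to\Hom(W,\Gamma_Q)\to\Hom(P_0,\Gamma_Q)\to\Hom(P_1,\Gamma_Q)\to\Tr W\to 0.\]
Applying $\FF$-duality $D$ then produces
\[0\to\tau W\to\nu P_1\to\nu P_0\to\nu W\to 0,\]
in which each $\nu P_i$ is injective because $\nu$ sends projectives to injectives.

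Next I would run two parallel computations and compare. First, apply $\Hom(-,V)$ to the projective resolution of $W$; using $\Ext^1(P_j,V)=0$ and $\Ext^2(W,V)=0$ (heredity) gives
\[0\to\Hom(W,V)\to\Hom(P_0,V)\to\Hom(P_1,V)\to\Ext^1(W,V)\to 0.\]
Second, apply $\Hom(V,-)$ to the sequence ending in $\nu P_0$ (split into two short exact sequences through the image, so that one can invoke $\Ext^1(V,\nu P_i)=0$ on each piece and reassemble), and then apply $D$, to obtain
\[0\to D\Ext^1(V,\tau W)\to D\Hom(V,\nu P_0)\to D\Hom(V,\nu P_1)\to D\Hom(V,\tau W)\to 0.\]
The crucial input is the natural isomorphism $D\Hom(V,\nu P)\cong\Hom(P,V)$ for every finitely generated projective $P$. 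This reduces to the indecomposable case $P=P_i=e_i\Gamma_Q$, where $\nu P_i$ is the indecomposable injective $I_i$ at vertex $i$ and both $\Hom(P_i,V)$ and $D\Hom(V,I_i)$ are canonically identified with $V_i$ as $\FF_{d_i}$-modules; from there one extends by additivity. Because this identification is natural in $V$, the two four-term exact sequences above have the same middle terms joined by the same middle map, so their outer terms are isomorphic, proving
\[D\Hom(V,\tau W)\cong\Ext^1(W,V),\qquad D\Ext^1(V,\tau W)\cong\Hom(W,V).\]

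The final numerical statement is then immediate: $D$ preserves $\FF$-dimensions, so
\[\langle\bfv,\tau\bfw\rangle=\dim_\FF\Hom(V,\tau W)-\dim_\FF\Ext^1(V,\tau W)=\dim_\FF\Ext^1(W,V)-\dim_\FF\Hom(W,V)=-\langle\bfw,\bfv\rangle,\]
which only sees the Grothendieck classes and so is a genuine identity in $\cK(Q)$.

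The main obstacle I expect is verifying the natural isomorphism $D\Hom(V,\nu P)\cong\Hom(P,V)$ in the $\FF$-species setting, where the bimodule structures on the $\Gamma_{ij}$ enter the definition of $\nu$ and one has to check that $\FF$-duality commutes compatibly with these tensors vertex by vertex. Everything else is bookkeeping once this adjunction-type identification is in hand; a minor subtlety is that if $W$ has a projective summand the second displayed isomorphism must be read modulo that summand (where $\tau W$ vanishes), but this does not affect the Ringel--Euler identity, since both sides of it also vanish on projective summands via $\langle\bfv,0\rangle=0$ and $\langle\bfw,\bfv\rangle=\dim_\FF\Hom(W,V)$ being matched by the first isomorphism.
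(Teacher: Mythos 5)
The paper gives no proof of this proposition at all---it is quoted as classical Auslander--Reiten duality---so there is no argument of the author's to compare against; your route is the standard textbook one, and your derivation of the \emph{first} isomorphism is correct: left-exactness of $\Hom(V,-)$ on the exact sequence $0\to\tau W\to\nu P_1\to\nu P_0$, together with the natural identification $D\Hom(V,\nu P)\cong\Hom(P,V)$ (whose naturality you need in the projective variable, not just in $V$, to make the middle squares commute), identifies $D\Hom(V,\tau W)$ with $\coker\bigl(\Hom(P_0,V)\to\Hom(P_1,V)\bigr)=\Ext^1(W,V)$.

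The gap is in your second four-term sequence. Splitting $0\to\tau W\to\nu P_1\to\nu P_0\to\nu W\to0$ through $K=\im(\nu P_1\to\nu P_0)$ and applying $\Hom(V,-)$ gives $\coker\bigl(\Hom(V,\nu P_1)\to\Hom(V,\nu P_0)\bigr)$ as an extension of $\im\bigl(\Hom(V,\nu P_0)\to\Hom(V,\nu W)\bigr)$ by $\Ext^1(V,\tau W)$, not $\Ext^1(V,\tau W)$ itself; injectivity of the $\nu P_i$ does not kill the extra term. It vanishes exactly when $\nu W=0$, i.e.\ (by heredity: the image of any map $W\to\Gamma_Q$ is projective and splits off) when $W$ has no projective summands---a reduction you never make. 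This is not cosmetic: for $W=P$ projective with $\Hom(P,V)\ne0$ one has $D\Ext^1(V,\tau W)=0\ne\Hom(W,V)$ and $\langle\bfv,\tau\bfw\rangle=0\ne-\langle\bfw,\bfv\rangle=-\dim_\FF\Hom(P,V)$, so your closing claim that the Euler-form identity is ``unaffected'' by projective summands is false: only $\Ext^1(W,V)$ vanishes there, not $\langle\bfw,\bfv\rangle$. The correct general statement replaces $\Hom(W,V)$ by the stable $\underline{\Hom}(W,V)$ (maps modulo projectives), which over a hereditary algebra equals $\Hom(W,V)$ precisely when $W$ has no projective summands; the second isomorphism and the Euler identity should be read with that hypothesis (or with $\tau\bfw$ meaning the Coxeter transform of $\bfw$), which is in fact the only way the paper uses them (e.g.\ it applies the identity to classes such as $\bfv-\bfa=\bfb$ with $B$ projective-free). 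So: right strategy and a sound first half, but the derivation of the second isomorphism needs the observation $\Hom(W,\Gamma_Q)=0$ for projective-free $W$, and your disposal of the projective-summand case is incorrect as written.
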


 \subsection{Tilting Theory for $\Mod\Gamma_Q$}\label{sec:tilting} The main result of this article uses the classical theory of tilting modules over an $\FF$-species.  We will present those results necessary to define a mutation operation for local tilting representations.  Our main result will show that this mutation operation corresponds with the Berenstein-Zelevinsky mutations of quantum seeds that we will present in Section~\ref{sec:qca}.  We freely abuse the equivalence from Proposition~\ref{prop:equiv} to go between representations of $(Q,\bfd)$ and modules over $\Gamma_Q$.

 We begin with the definition of a tilting module for $\Gamma_Q$.  For a representation $T$ we write $add(T)$ for the full additive subcategory of $\Rep_\FF(Q,\bfd)$ generated by the indecomposable summands of $T$.  Then a $\Gamma_Q$-module $T$ is called ``tilting'' if there is a coresolution of $\Gamma_Q$ of the form
 \begin{center}
  \begin{tikzpicture}
   \matrix (m) [matrix of math nodes, row sep=3em, column sep=2.5em, text height=1.5ex, text depth=0.25ex]
    {0 & \Gamma_Q & T_0 & T_1 & 0\\};
   \path[->]
    (m-1-1) edge (m-1-2)
    (m-1-2) edge (m-1-3)
    (m-1-3) edge (m-1-4)
    (m-1-4) edge (m-1-5);  
  \end{tikzpicture}
 \end{center}
 with $T_0,T_1\in add(T)$.  In order to define mutation of local tilting representations we need to recall two well known results from the representation theory of $\Gamma_Q$.
 \begin{theorem}[Happel-Ringel]\label{th:hr}\mbox{}
  \begin{enumerate}
   \item If $V$ and $W$ are indecomposable modules with $\Ext^1(V,W)=0$, then any nonzero map $W\to V$ is either a monomorphism or an epimorphism. 
   \item The dimension vectors of the indecomposable summands of a basic rigid module are linearly independent in the Grothendieck group of $\Gamma_Q$.
   \item A basic rigid module $T$ is a tilting module if and only if $T$ contains $n$ indecomposable summands.
  \end{enumerate}
 \end{theorem}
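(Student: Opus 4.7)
The plan is to prove the three items in the order stated, since (2) uses (1) and (3) uses (2).

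For part (1), I would argue by contradiction. Suppose that $f\colon W\to V$ is neither a monomorphism nor an epimorphism, so $K=\ker f$ and $C=\coker f$ are both nonzero, and factor $f$ through its image $U$ as $W \twoheadrightarrow U \hookrightarrow V$. Applying $\Hom(V,-)$ to the short exact sequence $0\to K\to W\to U\to 0$ and using that $\Gamma_Q$ is hereditary forces $\Ext^1(V,U)=0$; dually, applying $\Hom(-,W)$ to $0\to U\to V\to C\to 0$ yields $\Ext^1(U,W)=0$. The standard Happel--Ringel argument then combines these Ext-vanishings with the fact that $\End(V)$ and $\End(W)$ are local rings (Fitting's Lemma for finite-dimensional indecomposables) to produce an endomorphism of $V$ (or $W$) that is simultaneously non-invertible and non-nilpotent, which contradicts locality.

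For part (2), write $T=T_1\oplus\cdots\oplus T_r$ with pairwise nonisomorphic indecomposable summands. Using part (1) one checks that for $T_i\not\cong T_j$ at most one of $\Hom(T_i,T_j)$ and $\Hom(T_j,T_i)$ is nonzero: nonzero maps in both directions are each mono or epi by (1), and in every combination (two monos, two epis, or one of each) comparing dimension vectors forces some composition to be an isomorphism, yielding $T_i\cong T_j$ and contradicting basicness. After extending this relation to a compatible total order and reindexing so that $\Hom(T_j,T_i)=0$ whenever $j>i$, the Gram matrix $G_{ij}=\langle T_i,T_j\rangle = \dim_\FF \Hom(T_i,T_j)$ (using $\Ext^1(T_i,T_j)=0$ from rigidity) is upper triangular with strictly positive diagonal $\langle T_i,T_i\rangle = \dim_\FF\End(T_i)>0$. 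Hence $G$ is nonsingular and the classes $[T_i]$ are linearly independent in $\cK(Q)$.

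For part (3), the forward direction is immediate from the coresolution $0\to\Gamma_Q\to T_0\to T_1\to 0$ with $T_0,T_1\in add(T)$: in $\cK(Q)$ this gives $[\Gamma_Q]=[T_0]-[T_1]$, so the classes $[P_j]$ of the indecomposable projective summands of $\Gamma_Q$ lie in the $\ZZ$-span of the classes of the indecomposable summands of $T$. Since the $[P_j]$ form a $\ZZ$-basis of $\cK(Q)\cong\ZZ^n$, $T$ has at least $n$ indecomposable summands, and part (2) gives the matching upper bound. For the converse, I would invoke the Bongartz Lemma: any basic rigid module $M$ embeds in a basic tilting module $M\oplus X$, with $X$ constructed as the middle term of a universal extension $0\to\Gamma_Q\to E\to M^s\to 0$ built from an $\End(M)^{op}$-generating set of size $s$ for $\Ext^1(M,\Gamma_Q)$. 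If $T$ already has $n$ indecomposable summands, then $T\oplus X$ is basic rigid with at least $n$ summands; by part (2) it has at most $n$, so $X=0$ and $T$ itself is tilting.

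The main obstacle is the converse of part (3): rigorously establishing the Bongartz Lemma in the $\FF$-species setting requires tracking the $\End(M)^{op}$-bimodule structure on $\Ext^1(M,\Gamma_Q)$, verifying rigidity of the universal extension $E$ via careful long-exact-sequence arguments, and showing that $M\oplus E$ completes to a tilting module. The transitivity/ordering step within part (2) is a subsidiary subtlety, most cleanly handled by appealing to the (acyclic) Gabriel quiver of $\End(T)$ rather than by directly chasing compositions.
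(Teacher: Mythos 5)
The paper itself gives no proof of this theorem---it is quoted as classical background from tilting theory---so your proposal has to stand on its own, and as written it has genuine gaps. The clearest is the forward direction of part (3): from $[\Gamma_Q]=[T_0]-[T_1]$ you conclude that ``the classes $[P_j]$ lie in the $\ZZ$-span of the $[T_i]$,'' but the coresolution only places the single class $[\Gamma_Q]=\sum_j[P_j]$ in that span. The sequence does not split along the decomposition $\Gamma_Q=\bigoplus_j P_j$, and the cokernel of the restricted map $P_j\to T_0$ need not lie in $add(T)$, so you have produced one vector in the span, not $n$ independent ones, and the count ``at least $n$ summands'' does not follow. The true statement requires real work: either show that each indecomposable projective admits its own $add(T)$-coresolution (a standard but nontrivial lemma), or argue via the tilting theorem that the $[T_i]$ generate $\cK(Q)$. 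Your converse via Bongartz completion is sound in outline (Bongartz's lemma holds for arbitrary Artin algebras, hence for $\Gamma_Q$), except the conclusion should be $X\in add(T)$ rather than $X=0$; that still suffices, since the coresolution condition only depends on $add(T\oplus X)=add(T)$.

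In part (2), the exclusion of $\Hom(T_i,T_j)\neq0\neq\Hom(T_j,T_i)$ is misjustified in the mixed case: if $u:T_i\to T_j$ is mono and $v:T_j\to T_i$ is epi, comparing dimension vectors yields no contradiction, and $vu$ may well be zero. What rescues the claim is that $uv\neq0$ (as $u$ is injective and $v\neq0$) together with the brick property: part (1) applied with $V=W=T_j$ shows $\End(T_j)$ is a division ring, so $uv$ is invertible, forcing $u,v$ to be isomorphisms. More importantly, ruling out $2$-cycles does not give your total order; you need the entire Hom-relation among the summands to be acyclic, and appealing to ``the acyclic Gabriel quiver of $\End(T)$'' is circular, since that acyclicity is exactly what is at issue. (It can be proved directly: in a Hom-cycle of nonzero non-isomorphisms of minimal length all consecutive composites vanish---otherwise the cycle shortens, the length-one and length-two cases being excluded by brickness and the previous step---but then no map in the cycle can be an epimorphism, so by part (1) all are monomorphisms, whose composites are nonzero, a contradiction.) Finally, part (1) is in effect cited rather than proved: the vanishings $\Ext^1(V,U)=0$ and $\Ext^1(U,W)=0$ are correct, but you never exhibit the endomorphism or the contradiction, and the mechanism you describe (``non-invertible and non-nilpotent'') is not how the standard argument concludes; the usual route lifts the non-split sequence $0\to K\to W\to U\to0$ along $U\hookrightarrow V$ using heredity, pushes out along $K\hookrightarrow W$ and splits the result using $\Ext^1(V,W)=0$ itself, obtains an endomorphism of $W$ restricting to the identity on $K$---hence invertible by locality---and from this a splitting that contradicts indecomposability of $V$. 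Since your parts (2) and (3) lean on (1), including the brick property, this step cannot remain a placeholder.
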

 Note that Theorem~\ref{th:hr}.3 implies each local tilting representation $T$ of $Q$ is a tilting representation for the full subquiver $Q^T$ of $Q$ where $Q^T_0=\supp(T)$.  We will consider the zero representation as a tilting representation for the empty subquiver.  A basic rigid representation is called an ``almost complete tilting representation'' if it contains $n-1$ indecomposable summands.  The following theorem describes the possible ways to complete an almost complete tilting representation to a tilting representation. 
 \begin{theorem}[Happel-Unger]\label{th:hu}
  Let $T$ be an almost complete tilting module.
  \begin{enumerate}
   \item If $T$ is sincere, then there exist exactly two non-isomorphic complements to $T$, otherwise there is a unique complement.
   \item Suppose $T$ is sincere and write $V$ and $W$ for the complements to $T$.  Suppose $\Ext^1(V,W)\ne0$, then there is a unique non-split sequence 
   \begin{tikzpicture}[baseline=-2pt]
    \matrix (m) [matrix of math nodes, row sep=3em, column sep=2.5em, text height=1.5ex, text depth=0.25ex]
     {0 & W & E & V & 0\ .\\};
    \path[->]
     (m-1-1) edge (m-1-2)
     (m-1-2) edge (m-1-3)
     (m-1-3) edge (m-1-4)
     (m-1-4) edge (m-1-5);
   \end{tikzpicture}
   Furthermore $E\in add(T)$ and $\dim_\FF\End(V)=\dim_\FF\End(W)=\dim_\FF\Ext^1(V,W)$.
  \end{enumerate}
 \end{theorem}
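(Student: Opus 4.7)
The plan is to follow the classical Bongartz--Happel--Unger approach, organized around the universal extension construction and perpendicular subcategories. For existence of at least one complement, I would produce the Bongartz complement. For any rigid module $M$, set $s = \dim_\FF \Ext^1(M, \Gamma_Q)$ and form the universal extension
\[0 \to \Gamma_Q \to E \to M^s \to 0\]
whose extension class realizes a basis of $\Ext^1(M,\Gamma_Q)$. Applying $\Hom(M, -)$ and using rigidity of $M$ yields $\Ext^1(M, E) = 0$, while applying $\Hom(-, \Gamma_Q)$ together with the projectivity of $\Gamma_Q$ produces $\Ext^1(E, M \oplus E) = 0$; the sequence itself supplies the coresolution of $\Gamma_Q$ in $add(M \oplus E)$ required to make $M \oplus E$ a tilting module. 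Applied to our almost complete $T$, Theorems~\ref{th:hr}.2 and~\ref{th:hr}.3 force $E \cong T' \oplus X$ with $T' \in add(T)$ and $X$ a single indecomposable not in $add(T)$; this $X$ is the Bongartz complement.

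For uniqueness in the non-sincere case, fix $i \notin \supp(T)$ and consider the right perpendicular subcategory $T^\perp = \{Y : \Hom(T, Y) = 0 = \Ext^1(T, Y)\}$. The linear independence of $[T_1], \ldots, [T_{n-1}]$ from Theorem~\ref{th:hr}.2 combined with the hereditary property of $\Gamma_Q$ makes $T^\perp$ equivalent to the module category of a hereditary algebra whose Grothendieck group has rank $n - (n-1) = 1$, hence contains a unique indecomposable object. Every complement to $T$ lies in $T^\perp$, giving uniqueness. For the sincere case and for part (2), write $V$ for the Bongartz complement and take a minimal right $add(T)$-approximation $f: E \to V$. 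When $T$ is sincere and $\Ext^1(V, W) \neq 0$ for a sought-after second complement $W$, the map $f$ is surjective and $W := \ker f$ provides the complement, fitting into the exchange sequence
\[0 \to W \to E \to V \to 0\]
with $E \in add(T)$. Uniqueness of this non-split sequence (up to scalar) follows from the identities $\dim_\FF \Ext^1(V, W) = \dim_\FF \End V = \dim_\FF \End W$, which I would extract by applying $\Hom(V, -)$ and $\Hom(-, W)$ to the exchange sequence: the rigidity of each of $T$, $V$, $W$ collapses the long exact sequences, leaving connecting isomorphisms on both sides that yield the equalities.

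The main obstacle will be verifying that $E \in add(T)$ and that $W$ is indecomposable and disjoint from $add(T \oplus V)$. These are precisely the points where Theorem~\ref{th:hr}.1 does the essential work: any nonzero map between complements, or between a complement and a summand of $T$, must be a monomorphism or an epimorphism, which rules out the splitting of $W$ or the escape of summands of $E$ outside $add(T)$. Organizing the sincere case via the rank-two perpendicular subcategory obtained from $T$ by removing any fixed summand $T_k$ also provides a clean structural explanation: this rank-two hereditary subcategory contains precisely $V$ and $W$ as its indecomposable rigid objects, naturally forcing the exchange structure and the endomorphism-ring identities.
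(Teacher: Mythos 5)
The paper states this result without proof, as a recalled theorem of Happel--Unger, so the only meaningful comparison is with the classical argument; judged on its own terms, your proposal has genuine gaps in part (1) in both directions. The Bongartz-completion step is fine, but the uniqueness argument in the non-sincere case rests on the false claim that every complement of $T$ lies in $T^\perp=\{Y:\Hom(T,Y)=0=\Ext^1(T,Y)\}$: a complement $X$ satisfies the $\Ext$-orthogonality $\Ext^1(T,X)=\Ext^1(X,T)=0$, but nothing forces $\Hom(T,X)=0$. Concretely, for the quiver $1\to 2$ and the non-sincere almost complete tilting module $T=P_2=S_2$, the unique complement is $P_1$, which has $\Hom(T,P_1)\neq 0$, so it does not lie in $T^\perp$; worse, the unique indecomposable of $T^\perp$ is $S_1$, which is not a complement since $\Ext^1(S_1,S_2)\neq 0$. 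The same defect undermines your closing remark that the rank-two perpendicular category of $T/T_k$ ``contains precisely $V$ and $W$ as its indecomposable rigid objects'': a rank-two hereditary category of Kronecker type has infinitely many exceptional objects, and complements again need not lie in it. The standard proof goes the other way: if there were two complements they are linked by an exchange sequence $0\to W\to E\to V\to 0$ with $E\in add(T)$, and evaluating dimension vectors at a vertex $i\notin\supp(T)$ forces $V_i=W_i=0$, contradicting the sincerity (indeed faithfulness) of the tilting module $T\oplus V$; hence two complements force $T$ sincere.

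In the sincere case your construction of the second complement is incomplete and, as written, circular: the whole content is that the minimal right $add(T)$-approximation $E\to V$ of the Bongartz complement is \emph{surjective}, which is exactly where sincerity must be used, yet you assert it while conditioning on ``$\Ext^1(V,W)\neq 0$ for a sought-after second complement $W$'' --- i.e.\ on the object you are trying to build. You also never show there are at most two complements, which part (1) requires. By contrast, your treatment of part (2) is essentially sound once the exchange sequence with $E\in add(T)$ is in hand: applying $\Hom(V,-)$ and $\Hom(-,W)$ does yield $\End(V)\cong\Ext^1(V,W)\cong\End(W)$, provided you add the missing step that the composites $V\to E\to V$ and $W\to E\to W$ cannot be isomorphisms (otherwise $V$ would be a summand of $E\in add(T)$, resp.\ the sequence would split), which is where Theorem~\ref{th:hr}.1 and the division-ring property of the endomorphism rings of indecomposable rigid modules actually enter.
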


 Let $T$ be a local tilting representation of $(Q,\bfd)$.  We will use Theorem~\ref{th:hu} to define a mutation operation on $T$ which will produce another local tilting representation.  This construction first appeared in \cite{hub1}.  We will only consider those local tilting representations $T$ which may be obtained by iterated mutations starting from the zero representation.  Thus there will be a canonical labeling of the summands of $T$ by the vertices in its support and for $i\in\supp(T)$ we have $\End(T_i)\cong\End(S_i)\cong\FF_{d_i}$ by Theorem~\ref{th:hu}.2. 
 \begin{definition}
  Given a local tilting representation $T$ and a vertex $k\in Q_0$ we define the mutation $\mu_k(T)$ in direction $k$ as follows:
  \begin{enumerate}
   \item If $k\notin\supp T$, then by Theorem~\ref{th:hu}.1 there exists a unique complement $T_k^*$ so that $\mu_k(T)=T_k^*\oplus T$ is a local tilting representation containing $k$ in its support.
   \item If $k\in\supp T$, then write $\bar{T}=T/T_k$.
    \begin{enumerate}
     \item If $\bar{T}$ is a local tilting representation, i.e. $k\notin\supp\bar{T}$, let $\mu_k(T)=\bar{T}$.
     \item Otherwise $\supp\bar{T}=\supp T$ and by Theorem~\ref{th:hu} there exists a unique compliment $T_k^*\not\cong T_k$ so that $\mu_k(T)=T_k^*\oplus \bar{T}$ is a local tilting representation. 
    \end{enumerate}
  \end{enumerate}
 \end{definition}
 Notice that the definitions imply the mutation of local tilting representations is involutive.
 \begin{remark}
  It follows from results of \cite{bmrrt} that the mutation operation is transitive on the set of local tilting representations.
 \end{remark}

 \subsection{Matrices Associated to Local Tilting Representations}\label{sec:tilt_matrix} Here we define the matrix $B_T$ associated to a local tilting representation $T$.  We will show in Section~\ref{sec:exch_mut} that the matrix $B_T$ is skew-symmetrizable and that the mutation of local tilting representations induces the Fomin-Zelevinsky mutations of exchange matrices, see section~\ref{sec:qca}.  This construction was originally given in \cite{hub1}.  

 We will require a little preparation before we can define the matrix $B_T$.  For a valued representation $V$, we will call a morphism $V\to E$ (resp. $E\to V$) a ``left $add(\bar{T})$-approximation'' (resp. ``right $add(\bar{T})$-approximation'') of $V$ if
 \begin{itemize}
  \item $E\in add(\bar{T})$ and
  \item the induced map $\Hom(E,X)\to\Hom(V,X)$ (resp. $\Hom(X,E)\to\Hom(X,V)$) is surjective for any $X\in add(\bar{T})$.
 \end{itemize}
 In other words, $V\to E$ (resp. $E\to V$) is a left (resp. right) $add(\bar{T})$-approximation of $V$ if every map to (resp. from) an object of $add(\bar{T})$ factors through $V\to E$ (resp. $E\to V$).  A morphism $\phi:V\to E$ (resp. $\varphi:E\to V$) is called ``left minimal'' (resp. ``right minimal'') if any endomorphism $\psi$ of $E$ satisfying $\psi\circ\phi=\phi$ (resp. $\varphi\circ\psi=\varphi$) is an isomorphism.

 For each vertex $k$ of $Q$ our goal will be to define the entries $b_{ik}$ of the $k^{th}$ column of $B_T$.  First suppose $k$ is in the support of $T$ and, using the notation from the mutation of local tilting representations, suppose $\supp(\bar{T})=\supp(T)$.  Write $T_k^*$ for the other compliment of $\bar{T}$ described by the mutation.  Following Theorem~\ref{th:hu}.2 we assume that there is a unique non-split sequence
 \begin{equation}\label{eq:E_seq}
  \begin{tikzpicture}
   \matrix (m) [matrix of math nodes, row sep=1em, column sep=2.5em, text height=1.5ex, text depth=0.25ex]
    {0 & T_k^* & E & T_k & 0\,.\\};
   \path[->,font=\scriptsize]
    (m-1-1) edge (m-1-2)
    (m-1-2) edge (m-1-3)
    (m-1-3) edge (m-1-4)
    (m-1-4) edge (m-1-5);
  \end{tikzpicture}
 \end{equation}

 \begin{proposition}\label{prop:exch1}\cite[Lemma 21]{hub1}
  The map $T_k^*\to E$ is a minimal left $add(\bar{T})$-approximation of $T_k^*$ and the map $E\to T_k$ is a minimal right $add(\bar{T})$-approximation of $T_k$.
 \end{proposition}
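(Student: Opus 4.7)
The plan is to separate the two claims. The approximation property is a cohomological statement that reduces to the vanishing of certain Ext groups via the long exact sequence obtained from \eqref{eq:E_seq}. Minimality is a ring-theoretic statement that exploits the non-splitness of \eqref{eq:E_seq} together with the fact that $\End(T_k)\cong\End(T_k^*)\cong\FF_{d_k}$ is a field, which follows from Theorem~\ref{th:hu}.2 and the canonical labeling adopted in Section~\ref{sec:tilting} (any finite division algebra over $\FF$ is a field by Wedderburn).

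For the approximation property, I would apply $\Hom(-,X)$ with $X\in add(\bar{T})$ to \eqref{eq:E_seq}. Since $\Rep_\FF(Q,\bfd)$ is hereditary, the resulting long exact sequence shows that $\Hom(E,X)\to\Hom(T_k^*,X)$ is surjective as soon as $\Ext^1(T_k,X)=0$. But the local tilting representation $T=T_k\oplus\bar{T}$ is rigid, so $\Ext^1(T_k,\bar{T})=0$, and hence the required vanishing holds for every $X\in add(\bar{T})$. Dually, applying $\Hom(X,-)$ to \eqref{eq:E_seq} and using rigidity of the mutated tilting representation $T'=T_k^*\oplus\bar{T}$ shows $\Ext^1(X,T_k^*)=0$, and hence $\Hom(X,E)\to\Hom(X,T_k)$ is surjective.

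For left minimality of the inclusion $\iota\colon T_k^*\to E$, suppose $\psi\in\End(E)$ satisfies $\psi\iota=\iota$. Writing $\pi\colon E\to T_k$ for the quotient, the identity $(\psi-\Id)\iota=0$ forces a factorization $\psi=\Id+g\pi$ for some $g\colon T_k\to E$. The composition $\pi g$ lies in $\End(T_k)\cong\FF_{d_k}$; if it were nonzero, then $g(\pi g)^{-1}$ would be a section of $\pi$, contradicting the non-splitness of \eqref{eq:E_seq}. Hence $\pi g=0$, so in addition to $\psi\iota=\iota$ we obtain $\pi\psi=\pi$. The five lemma applied to the obvious endomorphism of the short exact sequence then forces $\psi$ to be an isomorphism. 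Right minimality of $\pi$ follows by the exact dual argument: any $\psi$ with $\pi\psi=\pi$ yields a map $f\colon E\to T_k^*$ with $\psi=\Id+\iota f$, the composition $f\iota\in\End(T_k^*)\cong\FF_{d_k}$ must vanish lest $(f\iota)^{-1}f$ provide a retraction of $\iota$, and one concludes via the five lemma as before.

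No significant obstacle is expected; the argument is essentially formal once the two key ingredients are in place, namely rigidity of the two tilting modules $T=T_k\oplus\bar{T}$ and $T'=T_k^*\oplus\bar{T}$ flanking the mutation, and the division-ring structure of $\End(T_k)$ and $\End(T_k^*)$. The one place to be cautious is to ensure $\End(T_k^*)$ really is a field rather than merely a local ring, which is why invoking Theorem~\ref{th:hu}.2 and Wedderburn is important.
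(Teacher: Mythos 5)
Your proposal is correct, and the approximation half coincides with the paper's argument: apply $\Hom(-,X)$ to \eqref{eq:E_seq} and use $\Ext^1(T_k,X)=0$ from rigidity of $T=T_k\oplus\bar{T}$; for the dual statement the paper only says ``similar,'' and your explicit use of rigidity of the mutated tilting representation $T_k^*\oplus\bar{T}$ to kill $\Ext^1(X,T_k^*)$ is exactly the right way to fill that in. (Do note that membership $E\in add(\bar{T})$, which is part of the definition of an approximation, should be cited from Theorem~\ref{th:hu}.2, as the paper does explicitly.) Where you genuinely diverge is minimality. The paper proceeds by comparing \eqref{eq:E_seq} with an arbitrary left approximation $T_k^*\to F$: it analyzes the cokernel $G$, shows $G$ is rigid and lies in $add(T)$ with $T_k$ as a summand, and deduces via uniqueness of the extension in $\Ext^1(T_k,T_k^*)$ that \eqref{eq:E_seq} is a direct summand of any approximation sequence, from which minimality follows by the image-of-$\psi$ argument. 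Your route is more direct and purely local: any $\psi$ with $\psi\iota=\iota$ has $\psi-\Id$ factoring through $\pi$, the composite $\pi g\in\End(T_k)$ must vanish since $\End(T_k)$ is a division ring (indeed $\cong\FF_{d_k}$ by the paper's labeling convention and Wedderburn) and a nonzero value would split the non-split sequence, and then the five lemma finishes; dually for $\pi$. This buys a shorter, self-contained proof of minimality that avoids the auxiliary module $G$, at the cost of explicitly invoking the division-ring structure of $\End(T_k)$ and $\End(T_k^*)$ (available here since both are exceptional over a hereditary category), whereas the paper's longer detour also establishes the summand property of \eqref{eq:E_seq} inside arbitrary approximations, a fact it reuses in its own minimality step. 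Both arguments are valid.
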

 \begin{proof}
  By Theorem~\ref{th:hu}, $E\in add(\bar{T})$.  Let $X\in add(\bar{T})$ and note that $\Ext^1(T_k,X)=0$ since $T$ is rigid.  Thus applying $\Hom(-,X)$ to the sequence \eqref{eq:E_seq} gives the exact sequence
  \begin{center}
   \begin{tikzpicture}
    \matrix (m) [matrix of math nodes, row sep=3em, column sep=2.5em, text height=1.5ex, text depth=0.25ex]
     {\Hom(E,X) & \Hom(T_k^*,X) & 0\,,\\};
    \path[->]
     (m-1-1) edge (m-1-2)
     (m-1-2) edge (m-1-3);
   \end{tikzpicture}
  \end{center}
  and from the surjectivity of this map $E$ is a left $add(\bar{T})$-approximation of $T_k^*$.  The approximation $T_k^*\to E$ factors through any other approximation $T_k^*\to F$ and since $T_k^*\to E$ was injective we see that $T_k^*\to F$ must also be injective.  Define $G$ by the short exact sequence:
  \begin{equation}\label{eq:F_seq}
   \begin{tikzpicture}
    \matrix (m) [matrix of math nodes, row sep=1em, column sep=2.5em, text height=1.5ex, text depth=0.25ex]
     {0 & T_k^* & F & G & 0\,.\\};
    \path[->,font=\scriptsize]
     (m-1-1) edge (m-1-2)
     (m-1-2) edge (m-1-3)
     (m-1-3) edge (m-1-4)
     (m-1-4) edge (m-1-5);
   \end{tikzpicture}
  \end{equation}
  Since $F\in add(\bar{T})$ and $\bar{T}$ is rigid, applying $\Hom(\bar{T},-)$ to the sequence~\eqref{eq:F_seq} gives $\Ext^1(\bar{T},G)=0$.  Since $T_k^*\to F$ is a left $add(\bar{T})$-approximation, the map $\Hom(F,\bar{T})\to\Hom(T_k^*,\bar{T})$ is surjective and applying $\Hom(-,\bar{T})$ to \eqref{eq:F_seq} shows that $\Ext^1(G,\bar{T})=0$.  Then applying $\Hom(G,-)$ to \eqref{eq:F_seq} implies $G$ and hence $\bar{T}\oplus G$ are rigid.  So we must either have $G\in add(\bar{T}\oplus T_k)$ or $G\in add(\bar{T}\oplus T_k^*)$, but $F$ is a non-trivial extension in $\Ext^1(G,T_k^*)$ and thus $G\in add(T)$.  Moreover $G$ must contain $T_k$ as a summand.  But $E$ is the unique extension in $\Ext^1(T_k,T_k^*)$ and so the sequence~\eqref{eq:E_seq} is a summand of the sequence~\eqref{eq:F_seq}.   To see minimality of $E$, suppose we have an endomorphism $\psi$ of $E$ making the diagram
  \begin{center}
   \begin{tikzpicture}
    \matrix (m) [matrix of math nodes, row sep=1em, column sep=2.5em, text height=1.5ex, text depth=0.25ex]
     { & E \\ T_k^* & \\ & E \\};
    \path[->,font=\scriptsize]
     (m-2-1) edge (m-1-2) edge (m-3-2)
     (m-1-2) edge node[auto] {$\psi$} (m-3-2);
   \end{tikzpicture}
  \end{center}
  commute.  Then the image of $\psi$ is again an $add(\bar{T})$-approximation and from the discussion above we see that $E$ must be a summand, in other words $\psi$ is an isomorphism and $E$ is minimal.  The proof for $E\to T_k$ is similar.
 \end{proof}
Using the Auslander-Reiten formulas of Proposition~\ref{prop:AR}, the unique extension $E$ from equation~\ref{eq:E_seq} gives rise to a unique morphism $\theta\in\Hom(T_k^*,\tau T_k)$.  From $\theta$ we get a short exact sequence
 \begin{equation}\label{eq:theta-seq1}
  \begin{tikzpicture}
   \matrix (m) [matrix of math nodes, row sep=3em, column sep=2.5em, text height=1.5ex, text depth=0.25ex]
    {0 & D & T_k^* & \tau T_k & \tau A \oplus I & 0\\};
   \path[->,font=\scriptsize]
    (m-1-1) edge (m-1-2)
    (m-1-2) edge (m-1-3)
    (m-1-3) edge node[auto] {$\theta$}(m-1-4)
    (m-1-4) edge (m-1-5)
    (m-1-5) edge (m-1-6);
  \end{tikzpicture}
 \end{equation}
 where $D=\ker\theta$, $\tau A\oplus I=\coker\theta$, $I$ is injective, and $A$ and $T_k$ have the same maximal projective summand $P_A=P_{T_k}$.  As in the proof of Lemma~\ref{lem:hom-hall} the sequence \eqref{eq:theta-seq1} is equivalent to the following short exact sequences
 \begin{equation}\label{eq:AD_seq1}
  \begin{tikzpicture}
   \matrix (m) [matrix of math nodes, row sep=1em, column sep=2.5em, text height=1.5ex, text depth=0.25ex]
    {0 & D & T_k^* & C & 0\,,\\};
   \path[->,font=\scriptsize]
    (m-1-1) edge (m-1-2)
    (m-1-2) edge (m-1-3)
    (m-1-3) edge (m-1-4)
    (m-1-4) edge (m-1-5);
  \end{tikzpicture}
 \end{equation}
 \begin{equation}\label{eq:AD_seq2}
  \begin{tikzpicture}
   \matrix (m) [matrix of math nodes, row sep=1em, column sep=2.5em, text height=1.5ex, text depth=0.25ex]
    {0 & B & T_k & A & 0\,,\\};
   \path[->,font=\scriptsize]
    (m-1-1) edge (m-1-2)
    (m-1-2) edge (m-1-3)
    (m-1-3) edge (m-1-4)
    (m-1-4) edge (m-1-5);
  \end{tikzpicture}
 \end{equation}
 \begin{equation}\label{eq:AD_seq3}
  \begin{tikzpicture}
   \matrix (m) [matrix of math nodes, row sep=1em, column sep=2.5em, text height=1.5ex, text depth=0.25ex]
    {0 & C & \tau B & I & 0\,,\\};
   \path[->,font=\scriptsize]
    (m-1-1) edge (m-1-2)
    (m-1-2) edge (m-1-3)
    (m-1-3) edge (m-1-4)
    (m-1-4) edge (m-1-5);
  \end{tikzpicture}
 \end{equation}
 where $B$ contains no projective summands.  The following Proposition will allow us to complete the definition of the $k^{th}$ column of $B_T$ in the case under consideration and shows that we are in a position to apply Theorem~\ref{th:exchange_mult}.
 \begin{proposition}\label{prop:exch2}\cite[Proposition 22]{hub1}\mbox{}
  \begin{enumerate}
   \item The map $T_k\to A$ is a minimal left $add(\bar{T})$-approximation of $T_k$.
   \item The map $D\to T_k^*$ is a minimal right $add(\bar{T})$-approximation of $T_k^*$.
   \item The objects $B$ and $C$ are indecomposable.
   \item The objects $\soc I$ and $T$ have disjoint supports and $\Hom(A,I)=\Hom(D,I)=0$.
  \end{enumerate}
  Moreover, we have $\End(B)\cong\End(C)\cong\End(T_k)\cong\End(T_k^*)$.
 \end{proposition}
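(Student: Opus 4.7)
The plan is to exploit the three short exact sequences \eqref{eq:AD_seq1}--\eqref{eq:AD_seq3} together with the Auslander--Reiten formulas of Proposition~\ref{prop:AR} and the rigidity of $T$, proving the claims in the order (1), (2), (3), (4), and then the ``moreover'' statement.  A guiding observation throughout is that the morphism $\theta\in\Hom(T_k^*,\tau T_k)$ is essentially unique: by the AR formula $\Hom(T_k^*,\tau T_k)\cong D\Ext^1(T_k,T_k^*)$, and by Theorem~\ref{th:hu}.2 this space is one-dimensional over $\End(T_k^*)\cong\End(T_k)\cong\FF_{d_k}$.

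For (1), I would first establish $A\in add(\bar T)$ by showing that $A$ is rigid and contains no summand isomorphic to $T_k$ or $T_k^*$.  Rigidity is obtained by applying $\Hom(A,-)$ and $\Hom(-,A)$ to \eqref{eq:AD_seq2} and using the rigidity of $T$ together with the AR formula to control $\Ext^1(A,A)$.  A summand $T_k$ of $A$ would split \eqref{eq:AD_seq2}, contradicting non-triviality of $\theta$; a summand $T_k^*$ of $A$ would violate the rigidity of the tilting module $\bar T\oplus T_k^*$.  For the approximation property, applying $\Hom(-,X)$ with $X\in add(\bar T)$ to \eqref{eq:AD_seq2} reduces the surjectivity of $\Hom(A,X)\to\Hom(T_k,X)$ to the vanishing of the connecting homomorphism into $\Ext^1(A,X)$, which follows from $A\in add(\bar T)$ and rigidity of $\bar T$.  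Minimality is by an argument parallel to Proposition~\ref{prop:exch1}: any endomorphism $\psi$ of $A$ with $\psi\circ(T_k\to A)=(T_k\to A)$ has image that is still an $add(\bar T)$-approximation, so $A$ must be a summand of that image, forcing $\psi$ to be an isomorphism.

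Claim (2) for $D\to T_k^*$ is parallel, using \eqref{eq:AD_seq1} in place of \eqref{eq:AD_seq2} and $\Hom(X,-)$ in place of $\Hom(-,X)$.  Claim (3), that $B$ and $C$ are indecomposable, uses the one-dimensionality of $\Hom(T_k^*,\tau T_k)$: if $C=C_1\oplus C_2$ nontrivially, then the projections $C\to C_i$ post-composed with $\theta$ produce elements of $\Hom(T_k^*,\tau T_k)$ that, being scalar multiples of $\theta$, force one projection to kill the entire image of $\theta$, contradicting $\im\theta=C$; indecomposability of $B$ follows analogously via \eqref{eq:AD_seq3} and the AR equivalence.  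Claim (4), that $\soc(I)$ and $T$ have disjoint supports, follows because a simple summand $S_i$ of $\soc(I)$ with $i\in\supp T$ would, via \eqref{eq:AD_seq3} and Proposition~\ref{prop:AR}, produce a nonzero $\Ext^1$-class between summands of $T$, contradicting rigidity.  The vanishing $\Hom(A,I)=\Hom(D,I)=0$ then follows since $A,D\in add(\bar T)$ and any nonzero map into the injective $I$ produces a nonzero map into $\soc(I)$, whose support is disjoint from $\supp T$.

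Finally, for the ``moreover'' statement, $\End(T_k)\cong\End(T_k^*)$ is given by Theorem~\ref{th:hu}.2.  To obtain $\End(C)\cong\End(T_k^*)$, note that \eqref{eq:AD_seq1} has kernel $D\in add(\bar T)$; applying $\Hom(-,C)$ and using $\Hom(D,C)=0$ (which follows from $D\in add(\bar T)$, the inclusion $C\hookrightarrow\tau T_k$, and the AR formula combined with rigidity of $T$) yields the isomorphism $\End(T_k^*)\cong\End(C)$.  The isomorphism $\End(B)\cong\End(T_k)$ is obtained analogously from \eqref{eq:AD_seq2}.  The main obstacle I anticipate is establishing the approximation properties in (1) and (2): this requires carefully chasing the three interlocking long exact sequences to verify that the relevant $\Ext$-groups vanish, using rigidity of $T$ and the Auslander--Reiten formulas in concert, and keeping careful track of the projective/injective summands split off in passing between $T_k, A, B$ and $\tau T_k, C, \tau B$.
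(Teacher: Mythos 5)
The central gap is in your argument for the approximation properties (1) and (2), which is where the real work of the paper's proof lies. For the \emph{surjection} $T_k\to A$ of \eqref{eq:AD_seq2}, applying $\Hom(-,X)$ gives the exact sequence $0\to\Hom(A,X)\to\Hom(T_k,X)\to\Hom(B,X)\to\Ext^1(A,X)\to\cdots$, and surjectivity of $\Hom(A,X)\to\Hom(T_k,X)$ is equivalent to the restriction map $\Hom(T_k,X)\to\Hom(B,X)$ being zero; it is \emph{not} implied by the vanishing of the connecting homomorphism $\Hom(B,X)\to\Ext^1(A,X)$, which only controls surjectivity of the wrong map. (Indeed, once $A\in add(\bar{T})$ one has $\Ext^1(A,X)=0$ for all $X\in add(\bar{T})$ automatically by rigidity, so your criterion would declare every epimorphism onto an object of $add(\bar{T})$ a left approximation.) You have transplanted the argument of Proposition~\ref{prop:exch1}, where the approximation map $T_k^*\to E$ is injective and an $\Ext^1$-vanishing is the correct obstruction; here the map is surjective (dually, $D\to T_k^*$ is injective), and what is actually needed is $\Hom(B,\bar{T})=0$ (resp.\ $\Hom(\bar{T},C)=0$). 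Producing these Hom-vanishings is exactly where the third sequence \eqref{eq:AD_seq3} and the Auslander--Reiten formulas enter in the paper: $\Hom(B,\bar{T})\cong D\Ext^1(\bar{T},\tau B)$, and $\Ext^1(\bar{T},\tau B)=0$ follows from $\Ext^1(\bar{T},C)=0$ and injectivity of $I$; nothing in your plan supplies them. Relatedly, your route to $A\in add(\bar{T})$ (``rigid and no summand $T_k$ or $T_k^*$'') is insufficient: a rigid module need not lie in $add$ of a given tilting module. One needs the two-sided vanishings $\Ext^1(\bar{T},A)=\Ext^1(A,\bar{T})=0$ together with $\Ext^1$-orthogonality to one of the complements, and then maximality of tilting modules (Theorem~\ref{th:hr}.3); also your stated reason for excluding a summand $T_k^*$ of $A$ does not hold as written, whereas e.g.\ $\Ext^1(T_k,A)=0$ (a quotient of $\Ext^1(T_k,T_k)=0$) versus $\Ext^1(T_k,T_k^*)\ne0$ does the job.

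The support statement in (4) is also not established by your sketch. It amounts to $\Hom(T_j,I)=0$ for every $j\in\supp T$, including $j=k$, and rigidity of $T$ alone does not yield a contradiction from a simple summand of $\soc I$ supported on $\supp T$: your proposed lift of a nonzero map $T_k\to I$ along $\tau B\to I$ is obstructed by $\Ext^1(T_k,C)$, which need not vanish. The paper instead proves $\langle T_k^*,I\rangle=0$ by the dimension count $\dim_\FF\Hom(T_k^*,\tau B)=\dim_\FF\Ext^1(T_k,T_k^*)=\dim_\FF\End(T_k^*)=\dim_\FF\Hom(T_k^*,C)$ (using Theorem~\ref{th:hu}.2) together with $\Ext^1(T_k^*,\tau B)=\Ext^1(T_k^*,C)=0$, and then uses that $\langle-,I\rangle$ vanishes on $add(T_k^*\oplus\bar{T})$ plus non-degeneracy of the Ringel--Euler form on $\cK(Q_T)$ to conclude the supports are disjoint. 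Your remaining items are essentially fine and agree with the paper --- the idempotent argument for the indecomposability of $C$, the deduction of $\Hom(A,I)=\Hom(D,I)=0$ once disjointness and $A,D\in add(\bar{T})$ are known, and the endomorphism-ring comparisons (though for $\End(C)\cong\End(T_k^*)$ you still need $\Hom(T_k^*,D)=0$, obtained via Theorem~\ref{th:hr}.1, to identify $\Hom(T_k^*,C)$ with $\End(T_k^*)$) --- but all of these sit downstream of the missing Hom-vanishings and the Euler-form computation described above.
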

 \begin{proof}
  Any automorphism $\psi$ of the image $C$ of $\theta$ gives rise to another element $\psi\theta$ in $\Hom(T_k^*,\tau T_k)$.  The uniqueness of $\theta$ implies $C$ must be indecomposable.  Applying the functors $\Hom(\bar{T},-)$ and $\Hom(-,\bar{T})$ to the sequence \eqref{eq:AD_seq1} we see that $\Ext^1(\bar{T},C)=\Ext^1(D,\bar{T})=0$ and we get an exact sequence
  \begin{center}
   \begin{tikzpicture}
    \matrix (m) [matrix of math nodes, row sep=3em, column sep=2.5em, text height=1.5ex, text depth=0.25ex]
     {0 & \Hom(\bar{T},D) & \Hom(\bar{T},T_k^*) & \Hom(\bar{T},C) & \Ext^1(\bar{T},D) & 0\,.\\};
    \path[->]
     (m-1-1) edge (m-1-2)
     (m-1-2) edge (m-1-3)
     (m-1-3) edge (m-1-4)
     (m-1-4) edge (m-1-5)
     (m-1-5) edge (m-1-6);
   \end{tikzpicture}
  \end{center}
  Applying the same functors to the sequence \eqref{eq:AD_seq2} gives $\Ext^1(\bar{T},A)=\Ext^1(B,\bar{T})=0$ and an exact sequence
  \begin{center}
   \begin{tikzpicture}
    \matrix (m) [matrix of math nodes, row sep=3em, column sep=2.5em, text height=1.5ex, text depth=0.25ex]
     {0 & \Hom(A,\bar{T}) & \Hom(T_k,\bar{T}) & \Hom(B,\bar{T}) & \Ext^1(A,\bar{T}) & 0\,.\\};
    \path[->]
     (m-1-1) edge (m-1-2)
     (m-1-2) edge (m-1-3)
     (m-1-3) edge (m-1-4)
     (m-1-4) edge (m-1-5)
     (m-1-5) edge (m-1-6);
   \end{tikzpicture}
  \end{center}
  From the Auslander-Reiten formula we see that $\Hom(\bar{T},\tau B)=\Ext^1(B,\bar{T})=0$.  Then applying $\Hom(\bar{T},-)$ to the sequence \eqref{eq:AD_seq3} implies $\Hom(\bar{T},C)=\Hom(\bar{T},I)=0$ and again using the Auslander-Reiten formula we get $\Hom(B,\bar{T})=\Ext^1(\bar{T},\tau B)=\Ext^1(\bar{T},C)=0$.  Thus from the $\Hom$-sequences above we get $\Ext^1(\bar{T},D)=\Ext^1(A,\bar{T})=0$.  

  Applying $\Hom(-,D)$ to the sequences~\eqref{eq:E_seq}, where $E\in add(\bar{T})$, and \eqref{eq:AD_seq1} shows that $\Ext^1(T_k^*,D)=0$ and thus $\Ext^1(D,D)=0$.  Again using sequence~\eqref{eq:E_seq} and that $\Ext^1(\bar{T},C)=0$ we see $\Ext^1(T_k^*,C)=0$.  Then applying $\Hom(-,C)$ to \eqref{eq:AD_seq1} shows $\Ext^1(D,C)=0$ and finally applying $\Hom(D,-)$ to \eqref{eq:AD_seq1} gives $\Ext^1(D,T_k^*)=0$.  Thus we see that $D$ cannot contain $T_k$ as a summand and since \eqref{eq:AD_seq1} is non-split $D$ cannot contain $T_k^*$ as a summand.  We conclude that $D\in add(\bar{T})$.  A similar computation shows that $A\in add(\bar{T})$.

  Since $\Hom(B,\bar{T})=\Hom(\bar{T},C)=0$, the $\Hom$-sequences above imply that $T_k\to A$ is a left $add(\bar{T})$-approximation and $D\to T_k^*$ is a right $add(\bar{T})$-approximation.  As in the proof of Proposition~\ref{prop:exch1} the injectivity of $D\to T_k^*$ and the surjectivity of $T_k\to A$ imply they are minimal.

  Now since $D\in add(\bar{T})$ we have $\Hom(D,C)=0$ and applying $\Hom(-,C)$ to \eqref{eq:AD_seq1} gives $\Hom(C,C)=\Hom(T_k^*,C)$.  Since $T_k^*$ is indecomposable it cannot be the middle term of a split sequence.  But $\Ext^1(D,T_k^*)=0$ and so by Theorem~\ref{th:hr} any nonzero map from $T_k^*$ to a summand of $D$ must be surjective.  So we must have $\Hom(T_k^*,D)=0$ and applying $\Hom(T_k^*,-)$ again to the sequence \eqref{eq:AD_seq1} gives $\Hom(T_k^*,C)=\Hom(T_k^*,T_k^*)$.  Similarly one can show that $\Hom(B,A)=0$ implying $\Hom(B,B)=\Hom(B,T_k)$ and that $\Hom(A,T_k)=0$ so that $\Hom(B,T_k)=\Hom(T_k,T_k)$.  Then Theorem~\ref{th:hu} implies $\End(C)\cong\End(T_k^*)\cong\End(T_k)\cong\End(B)$.  

  Again we note that $\Ext^1(T_k^*,C)=0$ so that \eqref{eq:AD_seq3} implies $\Ext^1(T_k^*,\tau B)=0$.  Then using the Auslander-Reiten formula and \eqref{eq:AD_seq2} we see that $\Hom(T_k^*,\tau B)=\Ext^1(B,T_k^*)=\Ext^1(T_k,T_k^*)$, which according to Theorem~\ref{th:hu} is equal to $\Hom(T_k^*,T_k^*)=\Hom(T_k^*,C)$.  Thus
  \[\langle T_k^*,I\rangle = \langle T_k^*,\tau B\rangle-\langle T_k^*, C\rangle = \dim_\FF\Hom(T_k^*,\tau B) - \dim_\FF\Hom(T_k^*,C) = 0.\]
  Now $\langle -,I\rangle$ is zero on $add(T_k^*\oplus\bar{T})$, but by Theorem~\ref{th:hr} the dimension vectors of the indecomposable summands of $T_k^*\oplus\bar{T}$ form a basis of the Grothendieck group $\cK(Q_T)$.  Thus, since the Ringel-Euler form is non-degenerate, the support of $\soc I$ cannot intersect $\supp(T)$.

  Finally, since $A,D\in add(\bar{T})$ and $\Hom(\bar{T},I)=0$ we see that $\Hom(A,I)=\Hom(D,I)=0$ and thus the hypotheses of Theorem~\ref{th:exchange_mult} are satisfied.
 \end{proof}

 Following Theorem~\ref{th:hr} we know that the following subset of $\cK(Q)$ forms a basis: $\{[T_i]\}_{i\in\supp T}\cup\{[P_i]\}_{i\notin\supp T}$.  We will consider the $k^{th}$ column $\bfb^k$ of $B_T$ as an element of $\cK(Q)$ via $\bfb^k=\sum\limits_{i\in\supp T} b_{ik}[T_i]+\sum\limits_{i\notin\supp T} b_{ik}[P_i]$.  Then for $k\in\supp(T)=\supp(\bar{T})$ we may define the $k^{th}$ column of $B_T$ via
 \[\bfb^k=[A]+[D]+{}^{*_P}[I]-[E]\]
 where we write $\bfe^{*_P}=\sum\limits_{j\notin\supp T} \langle\bfe,\alpha_j^\vee\rangle[P_j]$ and ${}^{*_P}\bfe=\sum\limits_{j\notin\supp T} \langle\alpha_j^\vee,\bfe\rangle[P_j]$ for $\bfe\in\cK(Q)$.  The $k^{th}$ column of $B_{T_k^*\oplus\bar{T}}$ is by definition $-\bfb^k$.

 We prove the following consistency result which will be necessary to identify $B_T$ as an exchange matrix.
 \begin{lemma}\label{lem:h1}
  In the above basis, the elements $[A]+[D]$ and $[E]$ of $\cK(Q)$ have disjoint supports.
 \end{lemma}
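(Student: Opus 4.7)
Since $A$, $D$, and $E$ all lie in $\add(\bar T)$, the classes $[A]$, $[D]$, $[E]$ in $\cK(Q)$ are each supported on $\{[T_i] : i \in \supp T \setminus \{k\}\}$ in the given basis, so disjointness of the supports of $[A] + [D]$ and $[E]$ is equivalent to saying that no summand $T_i$ of $\bar T$ appears simultaneously as an indecomposable summand of $E$ and of $A \oplus D$. The plan is to derive a contradiction in each of the two sub-cases (a common summand of $E$ and $A$, or of $E$ and $D$) by pitting minimality of the approximations against the $\Hom$-vanishings established in the proof of Proposition~\ref{prop:exch2}.

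For the $E$-vs-$A$ case, suppose for contradiction that $T_i$ is a summand of both and write $E = T_i \oplus E'$. By Proposition~\ref{prop:exch1}, the map $E \to T_k$ is a minimal right $\add(\bar T)$-approximation of $T_k$; the standard minimality argument shows that the restriction $T_i \hookrightarrow E \to T_k$ must be nonzero, for otherwise the idempotent $\pi : E \twoheadrightarrow E' \hookrightarrow E$ would satisfy $f \circ \pi = f$ without being an isomorphism, contradicting right minimality. Hence $\Hom(T_i, T_k) \neq 0$. On the other hand, since $T_i$ is a summand of $A$ one has $\Hom(T_i, T_k)$ as a direct summand of $\Hom(A, T_k)$, and the latter is zero by the vanishing $\Hom(A, T_k) = 0$ explicitly established in the proof of Proposition~\ref{prop:exch2}. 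This contradiction rules out the $E$-vs-$A$ overlap.

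The $E$-vs-$D$ case is handled dually. By Proposition~\ref{prop:exch1} the map $T_k^* \to E$ is a minimal left $\add(\bar T)$-approximation, and the same idempotent argument, applied now to left minimality, forces the projection $T_k^* \to E \twoheadrightarrow T_i$ to be nonzero for any indecomposable summand $T_i$ of $E$, giving $\Hom(T_k^*, T_i) \neq 0$. If $T_i$ were simultaneously a summand of $D$ then $\Hom(T_k^*, T_i)$ would be a summand of $\Hom(T_k^*, D)$, which is again zero by the proof of Proposition~\ref{prop:exch2}.

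The one place to be careful is the standard minimality argument for approximations, which I would spell out explicitly since it is the only nontrivial input beyond directly quoting Propositions~\ref{prop:exch1} and \ref{prop:exch2}; no new computation is needed, and the whole lemma is essentially a formal consequence of the two $\Hom$-vanishings $\Hom(A, T_k) = 0$ and $\Hom(T_k^*, D) = 0$ together with the observation that every indecomposable summand of a minimal approximation must carry a nonzero map to (respectively from) its target.
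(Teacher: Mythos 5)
Your proof is correct, but it closes the argument differently from the paper. Both you and the paper start the same way: a common indecomposable summand $T_i$ of $E$ and of $A$ (or $D$) is detected from the positivity of the coefficient of $[T_i]$, and the minimality of the approximations from Propositions~\ref{prop:exch1} and \ref{prop:exch2} is used to produce a nonzero map involving $T_i$ (your idempotent argument for minimality is the right one and is indeed worth spelling out). The difference is in how the contradiction is reached. The paper produces \emph{two} nonzero maps, $\gamma:T_i\to T_k$ and $\zeta:T_k\to T_i$ (one from each of the minimal approximations $E\to T_k$ and $T_k\to A$), and then invokes Theorem~\ref{th:hr} together with $\End(T_i)\cong\FF_{d_i}$, $\End(T_k)\cong\FF_{d_k}$ to force $T_i\cong T_k$, which is absurd since $T$ is basic; the $D$-versus-$E$ case is handled by the same argument with $T_k^*$ in place of $T_k$. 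You instead use only one approximation per case and kill the resulting nonzero $\Hom$-space against the vanishings $\Hom(A,T_k)=0$ and $\Hom(T_k^*,D)=0$, which are established inside the proof of Proposition~\ref{prop:exch2} (not in its statement). Your route is shorter and avoids re-running the Happel--Ringel argument, at the cost of leaning on intermediate facts proved only within that proof --- acceptable here, since the paper itself cites such facts elsewhere (e.g.\ in Lemma~\ref{lem:24}), but if you wanted the lemma to depend only on stated results you would reinstate the paper's two-map argument, which needs nothing beyond Theorem~\ref{th:hr}, the rigidity of $T$ and $T_k^*\oplus\bar{T}$, and minimality of the approximations.
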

 \begin{proof}
  We will show that the coefficients of $[T_i]$ in $[A]$ and $[E]$ cannot be simultaneously positive, the same argument will give the result for $[D]$ and $[E]$.  We argue for contradiction.  Suppose $[T_i]$ appears in both $[A]$ and $[E]$ with positive coefficient.  Then we have nonzero maps $\gamma:T_i\to T_k$ and $\zeta:T_k\to T_i$.  Since $\Ext^1(T_i,T_k)=\Ext^1(T_k,T_i)=0$, Theorem~\ref{th:hr} implies each of these maps is either a monomorphism or an epimorphism.  Notice that this implies one of the compositions $\gamma\circ\zeta$ or $\zeta\circ\gamma$ is nonzero.  Since $\End(T_i)\cong\FF_{d_i}$ and $\End(T_k)\cong\FF_{d_k}$, this implies the nonzero composition is an isomorphism.  But then both maps are injective and surjective, i.e. $T_i\cong T_k$, a contradiction.
 \end{proof}

 Now consider the situation where $\bar{T}$ is a local tilting representation with $k\notin\supp(\bar{T})$.  Write $T_k^*$ for the unique complement of $\bar{T}$.  Let $I_k$ denote the injective hull of the simple $S_k$ and write $P_k=\nu^{-1}(I_k)$ for the corresponding projective representation.  
 \begin{lemma}\label{lem:morphisms}\cite[Proposition 26]{hub1}
  There exists a unique nonzero morphism $P_k\to T_k^*$ and a unique morphism $T_k^*\to I_k$.
 \end{lemma}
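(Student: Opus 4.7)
The plan is to reduce both assertions to a single dimensional equality and then derive that equality from the basis property of tilting representations. For any $V \in \Rep_\FF(Q,\bfd)$, the equivalence of Proposition~\ref{prop:equiv} together with the descriptions $P_k = \Gamma_Q e_k$ and $I_k = D(e_k\Gamma_Q)$ yields canonical isomorphisms of $\FF_{d_k}$-vector spaces
\begin{equation*}
\Hom(P_k,V) \cong V_k \qquad\text{and}\qquad \Hom(V,I_k) \cong D(V_k).
\end{equation*}
Applied to $V = T_k^*$, and using that $\End(T_k^*) \cong \FF_{d_k}$ by the canonical labeling of summands (via Theorem~\ref{th:hu}.2 propagated through the mutation sequence from $T = 0$), both $\Hom$ spaces are one-dimensional over $\FF_{d_k}$---so that ``a unique nonzero morphism up to $\FF_{d_k}$-scalar'' is realized---precisely when $\dim_{\FF_{d_k}}(T_k^*)_k = 1$.

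To prove this dimensional claim, by Theorem~\ref{th:hr}.3 view $T = T_k^* \oplus \bar T$ as an honest tilting representation of the subquiver $Q^T = \supp(T)$; by Theorem~\ref{th:hr}.2 combined with standard tilting theory, the classes $\{[T_i]\}_{i \in \supp T}$ form a $\ZZ$-basis of $\cK(Q^T)$. Expand the simple class $[S_k] \in \cK(Q^T)$ in this basis as $[S_k] = \sum_{i \in \supp T} c_i [T_i]$ with $c_i \in \ZZ$, and extract the coordinate at vertex $k$ (noting $[T_i]_k = \dim_{\FF_{d_k}}(T_i)_k$) to obtain
\begin{equation*}
1 \;=\; \sum_{i \in \supp T} c_i \cdot \dim_{\FF_{d_k}}(T_i)_k \;=\; c_k \cdot \dim_{\FF_{d_k}}(T_k^*)_k,
\end{equation*}
where the second equality uses $(T_i)_k = 0$ for every $i \ne k$ in $\supp T$: each such $T_i$ is a summand of $\bar T$, and $(\bar T)_k = 0$ by the hypothesis $k \notin \supp(\bar T)$. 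Integrality of $c_k$ and positivity of $\dim_{\FF_{d_k}}(T_k^*)_k$ together force both to equal $1$.

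The main subtle point is the $\ZZ$-basis property of $\{[T_i]\}$ inside $\cK(Q^T)$ rather than merely $\ZZ$-linear independence: this follows from the tilting equivalence with $\mod\,\End(T)^{op}$, under which the $T_i$ correspond to the indecomposable projectives of $\End(T)^{op}$ whose classes trivially form a $\ZZ$-basis. Otherwise the argument amounts to a direct computation in the Grothendieck group, combining the support hypothesis $k \notin \supp(\bar T)$ with the standard adjunctions $\Hom(P_k,-) \cong (-)_k$ and $\Hom(-,I_k) \cong D(-)_k$ coming from the species structure of $\Gamma_Q$.
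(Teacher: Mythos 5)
Your proposal is correct, but it proves the lemma by a genuinely different route than the paper. The paper works inside the subquiver algebra $\Gamma_{Q_{T_k^*\oplus\bar{T}}}$: writing $\bar{P}_k$ for the projective at $k$ over that algebra, it uses $\Hom(\bar{P}_k,\bar{T})=0$ together with the coresolution $0\to\bar{P}_k\to(T_k^*)^s\to F\to0$ furnished by the tilting property to conclude $s=1$ and $\Hom(\bar{P}_k,T_k^*)\cong\End(\bar{P}_k)$, then composes with the canonical surjection $P_k\to\bar{P}_k$; the injective statement is dual. A side benefit of that argument is that the very same coresolution produces the cokernel $F$ and the approximation $T_k^*\to F$ reused in Proposition~\ref{prop:approx}. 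You instead reduce both statements, via the standard adjunctions $\Hom(P_k,-)\cong(-)_k$ and $\Hom(-,I_k)\cong D((-)_k)$, to the single numerical claim $\dim_{\FF_{d_k}}(T_k^*)_k=1$, and prove that by expanding $[S_k]$ in the classes $\{[T_i]\}_{i\in\supp T}$ inside $\cK(Q^T)$ and comparing the $k$-th coordinates, using $(\bar{T})_k=0$; this is clean, treats the projective and injective cases symmetrically, and makes the numerical content of the lemma explicit. The one step you should not call trivial is exactly the one you flag: that the summands of a tilting module give a $\ZZ$-basis (not merely a linearly independent set, which is all Theorem~\ref{th:hr}.2 provides) of the Grothendieck group. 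There is no equivalence of module categories with $\mod\End(T)^{op}$; one needs the derived (or Brenner--Butler) equivalence to identify the $[T_i]$ with the classes of the indecomposable projectives over the tilted algebra, and those classes form a $\ZZ$-basis of its Grothendieck group not ``trivially'' but because the tilted algebra has finite global dimension (at most $2$), so its Cartan matrix is unimodular --- for a general finite-dimensional algebra the projective classes do not even span over $\ZZ$. With that standard fact invoked correctly, your argument is complete; note also that positivity of $\dim_{\FF_{d_k}}(T_k^*)_k$ comes from the mutation's guarantee that $k\in\supp(T_k^*\oplus\bar{T})$ while $k\notin\supp\bar{T}$, which you use implicitly.
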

 \begin{proof}
  Let $\bar{P}_k$ denote the projective representation of $Q_{T_k^*\oplus\bar{T}}$ associated to vertex $k$.  Note that there exists a unique morphism $P_k\to \bar{P}_k$ and since $T_k^*$ has support on $Q_{T_k^*\oplus\bar{T}}$, any morphism $P_k\to T_k^*$ factors through a morphism $\bar{P}_k\to T_k^*$.  Now recall that $k\notin\supp(\bar{T})$ and thus $\Hom(\bar{P}_k,\bar{T})=0$.  Since $T_k^*\oplus \bar{T}$ is a local tilting representation we have a coresolution 
  \begin{center}
   \begin{tikzpicture}
    \matrix (m) [matrix of math nodes, row sep=3em, column sep=2.5em, text height=1.5ex, text depth=0.25ex]
     {0 & \bar{P}_k & (T_k^*)^s & F & 0\,,\\};
    \path[->]
     (m-1-1) edge (m-1-2)
     (m-1-2) edge (m-1-3)
     (m-1-3) edge (m-1-4)
     (m-1-4) edge (m-1-5);
   \end{tikzpicture}
  \end{center}
  where $s\ge1$ and $F\in add(\bar{T})$.  Since $\Hom(\bar{P}_k,\bar{P}_k)$ is a field, applying the functor $\Hom(\bar{P}_k,-)$ to this coresolution shows $s=1$ and $\Hom(\bar{P}_k,\bar{P}_k)=\Hom(\bar{P}_k,T_k^*)$, in particular there is a unique (up to scalar) nonzero morphism $\bar{P}_k\to T_k^*$.  Composing with the unique morphism $P_k\to \bar{P}_k$ completes the claim.  The analogous claim for $I_k$ is dual.
 \end{proof}
 \begin{lemma}\label{lem:ends}\cite[Proposition 26]{hub1}
  The endomorphism rings $\End(P_k)$, $\End(\bar{P}_k)$, $\End(T_k^*)$, $\End(\bar{I}_k)$, and $\End(I_k)$ are all isomorphic.
 \end{lemma}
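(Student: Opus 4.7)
The plan is to identify each of the five endomorphism rings with $\FF_{d_k}\cong\End(S_k)$, so that pairwise isomorphism follows immediately.

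First I would dispatch the four ``easy'' cases. Since $Q$ is acyclic, there are no non-trivial paths from $k$ to itself in $\Gamma_Q$, so $\End(P_k)\cong e_k\Gamma_Q e_k\cong\FF_{d_k}$, and the dual argument in $\Gamma_{Q^{op}}$ gives $\End(I_k)\cong\FF_{d_k}$. The subquiver $Q_{T_k^*\oplus\bar{T}}$ is also acyclic and contains the vertex $k$ (because $k\notin\supp(\bar{T})$ but $k\in\supp(T_k^*\oplus\bar{T})$), so the same reasoning in the species $\Gamma_{Q_{T_k^*\oplus\bar{T}}}$ yields $\End(\bar{P}_k)\cong\End(\bar{I}_k)\cong\FF_{d_k}$.

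The remaining case is $\End(T_k^*)$. For this I would invoke the remark at the end of Section~\ref{sec:tilting}: throughout the paper we restrict to local tilting representations obtained by iterated mutation from the zero representation, and for such $T$ every indecomposable summand $T_i$ (with $i\in\supp(T)$) satisfies $\End(T_i)\cong\End(S_i)\cong\FF_{d_i}$ by an inductive application of Theorem~\ref{th:hu}.2 along the mutation sequence. Since $T_k^*\oplus\bar{T}$ arises in this way and $k\in\supp(T_k^*\oplus\bar{T})$, the summand $T_k^*$ associated to vertex $k$ satisfies $\End(T_k^*)\cong\FF_{d_k}$. Combining the four identifications with this last one gives all five rings isomorphic to $\FF_{d_k}$, hence to one another.

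The main obstacle is the honest verification that $\End(T_k^*)\cong\FF_{d_k}$ without appealing to the iterated-mutation hypothesis. A natural attempt uses the short exact sequence $0\to\bar{P}_k\to T_k^*\to F\to 0$ from the proof of Lemma~\ref{lem:morphisms}, where $F\in add(\bar{T})$. Applying $\Hom(-,T_k^*)$ and invoking the rigidity of $T_k^*\oplus\bar{T}$ (to kill $\Ext^1(F,T_k^*)$) together with the isomorphism $\Hom(\bar{P}_k,T_k^*)\cong\End(\bar{P}_k)$ already established in Lemma~\ref{lem:morphisms} produces a surjective ring map $\End(T_k^*)\twoheadrightarrow\End(\bar{P}_k)\cong\FF_{d_k}$ with kernel $\Hom(F,T_k^*)$. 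Since $T_k^*$ is indecomposable, $\End(T_k^*)$ is local and this kernel lies in the radical; ruling it out requires additional structural input (for instance, exploiting that $\bar{P}_k\to T_k^*$ is part of a minimal left $add(T_k^*\oplus\bar{T})$-approximation, forcing any map from $F\in add(\bar{T})$ to $T_k^*$ to split off a summand of $T_k^*$ and contradict its indecomposability). This is the only genuinely non-formal step; the other four identifications are bookkeeping.
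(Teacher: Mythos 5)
Your handling of the four ``easy'' rings is fine and agrees with the paper, which identifies $\End(P_k)$, $\End(\bar{P}_k)$, $\End(I_k)$, $\End(\bar{I}_k)$ with $\End(S_k)$ by viewing them as projective covers, respectively injective envelopes, of $S_k$ over the two algebras. The problem is the fifth ring. Your primary route --- citing the remark at the end of Section~\ref{sec:tilting} that $\End(T_i)\cong\FF_{d_i}$ for all summands of an iteratively mutated local tilting representation --- is circular here: that remark is justified by Theorem~\ref{th:hu}.2, which only applies when the almost complete tilting module is sincere and has two complements, i.e.\ when the support does not change under mutation. The situation of this lemma is exactly the opposite case ($k\notin\supp\bar{T}$, unique complement $T_k^*$), and establishing $\End(T_k^*)\cong\End(S_k)$ in this case is precisely what the lemma (Hubery's Proposition 26) is for; you cannot quote the remark to prove it.

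Your fallback argument is the paper's actual strategy --- apply $\Hom(-,T_k^*)$ to $0\to\bar{P}_k\to T_k^*\to F\to 0$, kill $\Ext^1(F,T_k^*)$ by rigidity of $T_k^*\oplus\bar{T}$, and use $\Hom(\bar{P}_k,T_k^*)\cong\End(\bar{P}_k)$ from Lemma~\ref{lem:morphisms} --- but you stop exactly at the step that carries the content: proving $\Hom(F,T_k^*)=0$. Your parenthetical suggestion (that minimality of the approximation would force a map $F\to T_k^*$ to split off a summand of $T_k^*$) is not an argument; minimality gives no such splitting. Note also that observing the kernel $\Hom(F,T_k^*)$ lies in the radical of the local ring $\End(T_k^*)$ proves nothing: a priori $\End(T_k^*)$ could be local with nonzero radical and residue field $\FF_{d_k}$, which is weaker than the lemma. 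The paper closes this gap with Theorem~\ref{th:hr}: since $F\in add(\bar{T})$ and $T_k^*\oplus\bar{T}$ is rigid, $\Ext^1(T_k^*,F)=0$, so any nonzero map from an indecomposable summand of $F$ to $T_k^*$ is a monomorphism or an epimorphism; an epimorphism is impossible because $[F]=[T_k^*]-[\bar{P}_k]$ is strictly smaller than $[T_k^*]$, and a monomorphism is excluded using the indecomposability of $T_k^*$ (any such map, composed with the surjection $T_k^*\to F$, would exhibit $T_k^*$ as a summand-type situation it cannot occur in). Some appeal to Theorem~\ref{th:hr} together with rigidity, which you never invoke, is what makes the argument go through; as written, your proposal has a genuine gap at this point.
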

 \begin{proof}
  Since $C\in add(\bar{T})$ we know $\Ext^1(T_k^*,F)=0$ and according to Theorem~\ref{th:hr} any nonzero map from an indecomposable summand of $F$ to $T_k^*$ is injective.  But $T_k^*$ is indecomposable and so all such maps must be zero, in particular $\Hom(F,T_k^*)=0$.  Thus applying $\Hom(-,T_k^*)$ to the sequence 
  \begin{center}
   \begin{tikzpicture}
    \matrix (m) [matrix of math nodes, row sep=3em, column sep=2.5em, text height=1.5ex, text depth=0.25ex]
     {0 & \bar{P}_k & T_k^* & F & 0\\};
    \path[->]
     (m-1-1) edge (m-1-2)
     (m-1-2) edge (m-1-3)
     (m-1-3) edge (m-1-4)
     (m-1-4) edge (m-1-5);
   \end{tikzpicture}
  \end{center}
  gives $\Hom(\bar{P}_k,\bar{P}_k)=\Hom(\bar{P}_k,T_k^*)=\Hom(T_k^*,T_k^*)$.

  Since $P_k$ and $\bar{P}_k$ are both projective covers of the simple $S_k$, albeit as modules over different algebras, they have isomorphic endomorphism rings, i.e. $\End(P_k)\cong\End(S_k)\cong\End(\bar{P}_k)$.  The same argument holds for $I_k$ and $\bar{I}_k$ regarding them both as injective envelopes of $S_k$.
 \end{proof}

 As in the proof of Lemma~\ref{lem:morphisms}, write $F$ for the cokernel of the unique map $P_k\to T_k^*$ and let $P'$ denote the kernel.  Similarly write $G$ for the kernel of the unique morphism $T_k^*\to I_k$ and let $I'$ be the cokernel.  
 \begin{proposition}\label{prop:approx}\cite[Proposition 26]{hub1}
  The map $T_k^*\to F$ is a minimal left $add(\bar{T})$-approximation of $T_k^*$ and $G\to T_k^*$ is a minimal right $add(\bar{T})$-approximation of $T_k^*$.  Moreover, we have $\Hom(P',F)=\Hom(G,I')=0$ and the supports of $\soc I'$ and $P'/\rad P'$ are disjoint.
 \end{proposition}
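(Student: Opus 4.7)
The plan is to follow the pattern of Proposition~\ref{prop:exch2}, but replacing the Auslander--Reiten machinery with the simpler factorizations of the canonical morphisms $P_k\to T_k^*$ and $T_k^*\to I_k$ from Lemma~\ref{lem:morphisms}. First I would record the four short exact sequences
\begin{gather*}
 0\to \bar{P}_k\to T_k^*\to F\to 0, \quad 0\to P'\to P_k\to \bar{P}_k\to 0,\\
 0\to G\to T_k^*\to \bar{I}_k\to 0, \quad 0\to \bar{I}_k\to I_k\to I'\to 0
\end{gather*}
coming from the factorizations $P_k\twoheadrightarrow \bar{P}_k\hookrightarrow T_k^*$ and $T_k^*\twoheadrightarrow \bar{I}_k\hookrightarrow I_k$, where $F,G\in add(\bar{T})$ by the coresolution argument of Lemma~\ref{lem:morphisms} and its dual.

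To establish the approximation property for $T_k^*\to F$, I apply $\Hom(-,X)$ with $X\in add(\bar{T})$ to the first sequence and observe $\Hom(\bar{P}_k,X)=X_k=0$ (since $k\notin\supp\bar{T}$), which yields the surjection $\Hom(F,X)\twoheadrightarrow \Hom(T_k^*,X)$. Left minimality is automatic because $T_k^*\to F$ is surjective as a cokernel: any $\psi\in\End(F)$ with $\psi\circ(T_k^*\to F)=(T_k^*\to F)$ agrees with the identity on the image, hence is the identity. The statement for $G\to T_k^*$ is dual, and right minimality follows since $G\to T_k^*$ is injective. For $\Hom(P',F)=0$, apply $\Hom(-,F)$ to the second sequence: $\Hom(P_k,F)=F_k=0$, and $\Ext^1(\bar{P}_k,F)=0$ because $F$ and $\bar{P}_k$ both have support in $\supp(T_k^*\oplus \bar{T})$, so any extension between them lives in the restricted category where $\bar{P}_k$ is projective. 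The vanishing $\Hom(G,I')=0$ is dual.

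The main obstacle is the disjoint-supports claim, which I intend to deduce from the auxiliary vanishing $\Hom(P',I')=0$. Applying $\Hom(P',-)$ to the fourth sequence and using $\Ext^1(P',\bar{I}_k)=0$ (since $P'$ is projective) produces a surjection $\Hom(P',I_k)\twoheadrightarrow \Hom(P',I')$, while $\Hom(P',I_k)\cong D(P')_k$. The key observation is $(P')_k=0$: the second sequence gives $(P')_k\hookrightarrow (P_k)_k$, and the map $(P_k)_k\to (\bar{P}_k)_k$ is an isomorphism, because both spaces are canonically the head $\FF_{d_k}$ of the projective cover of $S_k$ (here acyclicity of $Q$ is essential). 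Thus $\Hom(P',I')=0$. Were some vertex $j$ in both $\supp\soc I'$ and $\supp(P'/\rad P')$, then $P_j$ would be a summand of the projective $P'$ and $I_j$ a summand of the injective $I'$, making the composition $P'\twoheadrightarrow P_j\twoheadrightarrow S_j\hookrightarrow I_j\hookrightarrow I'$ a nonzero element of $\Hom(P',I')$, contradicting the vanishing.
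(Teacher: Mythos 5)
Your proposal is correct. For the approximation and minimality statements and for $\Hom(P',F)=\Hom(G,I')=0$ you follow essentially the same route as the paper: the paper also applies $\Hom(-,\bar{T})$ and $\Hom(\bar{T},-)$ to the defining sequences, using $\Hom(\bar{P}_k,\bar{T})=\Hom(\bar{T},\bar{I}_k)=0$, and gets minimality for free from surjectivity/injectivity of the approximation maps; your explicit justification of $\Ext^1(\bar{P}_k,F)=0$ by restricting to the full subquiver on $\supp(T_k^*\oplus\bar{T})$ just makes precise a step the paper compresses into the chain $\Hom(P_k,\bar{T})=\Hom(P',\bar{T})=\Hom(P',F)=0$. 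Where you genuinely diverge is the disjoint-supports claim. The paper's argument is purely combinatorial: it observes, exactly as you do, that $(P')_k=(I')_k=0$, but then notes that $P'\subset P_k$ is supported on proper successors of $k$ (paths beginning at $k$) while $I'$, a quotient of $I_k$, is supported on proper predecessors of $k$ (paths ending at $k$), so acyclicity makes the \emph{entire} supports of $P'$ and $I'$ disjoint, a fortiori those of $P'/\rad P'$ and $\soc I'$. You instead prove the auxiliary vanishing $\Hom(P',I')=0$ (via $\Ext^1(P',\bar{I}_k)=0$, $\Hom(P',I_k)\cong D((P')_k)=0$) and then use the hereditary decompositions $P'\cong\bigoplus P_j^{m_j}$, $I'\cong\bigoplus I_j^{n_j}$ to manufacture a nonzero map $P'\to I'$ from any common vertex of the top of $P'$ and the socle of $I'$. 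Both arguments are sound; the paper's is shorter and gives the slightly stronger statement $\supp P'\cap\supp I'=\emptyset$ without invoking projectivity of $P'$ or injectivity of $I'$ at that point, while yours stays entirely inside the homological framework already set up for Theorem~\ref{th:proj_mult} and avoids any appeal to the path description of $P_k$ and $I_k$ beyond the single fact $(P_k)_k\cong(\bar{P}_k)_k\cong\FF_{d_k}$.
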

 \begin{proof}
  Since $k\notin\supp(\bar{T})$, we have $\Hom(\bar{P}_k,\bar{T})=\Hom(\bar{T},\bar{I}_k)=0$.  Then applying $\Hom(-,\bar{T})$ and $\Hom(\bar{T},-)$ respectively to the defining exact sequences of $T_k^*\to F$ and $G\to T_k^*$ we see that $T_k^*\to F$ is a left $add(\bar{T})$-approximation and $G\to T_k^*$ is a right $add(\bar{T})$-approximation.  As in the proof of Proposition~\ref{prop:exch1}, these are minimal since $T_k^*\to F$ is surjective and $G\to T_k^*$ is injective.  

  Since $\bar{P}_k=P_k/P'$ and $I'=I_k/\bar{I}_k$, neither $P'$ nor $I'$ contains vertex $k$ in its support.  Recall that $P_k$ can be described in terms of paths beginning at vertex $k$ and $I_k$ can be described in terms of paths ending at vertex $k$.  Thus $Q$ being acyclic implies $[P'/\rad P']$ and $[\soc I']$ have disjoint support.

  Now since $k\notin\supp(\bar{T})$ we have $\Hom(P_k,\bar{T})=\Hom(P',\bar{T})=\Hom(P',F)=0$ and $\Hom(\bar{T},I_k)=\Hom(\bar{T},I')=\Hom(D,I')=0$. Thus the hypotheses of Theorem~\ref{th:proj_mult} are satisfied.
 \end{proof}

 As above we will consider the $k^{th}$ column $\bfb^k$ of $B_T$ as an element of $\cK(Q)$.  Now when $\bar{T}$ is a local tilting representation with $k\notin\supp(\bar{T})$ we may define the $k^{th}$ column of $B_T$ via
 \[\bfb^k=[F]+[P']^{*_P}-[G]-{}^{*_P}[I'].\]
 By definition the $k^{th}$ column of $B_{\bar{T}}$ is $-\bfb^k$.  Using a similar argument as in the proof of Lemma~\ref{lem:h1} we get the following consistency result.
 \begin{lemma}\label{lem:h2}
  Written in the basis $\{[T_i]\}_{i\in\supp T}\cup\{[P_i]\}_{i\notin\supp T}$, the elements $[F]$ and $[G]$ of $\cK(Q)$ have disjoint supports.
 \end{lemma}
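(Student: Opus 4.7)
My plan is to adapt the contradiction argument of Lemma~\ref{lem:h1} to this projective/injective setting. By Proposition~\ref{prop:approx} both $F$ and $G$ lie in $add(\bar{T})$, so when expressed in the given basis the coefficients of $[F]$ and $[G]$ on any $[P_j]$ vanish; both supports live entirely inside $\{[T_i]\}_{i\in\supp\bar{T}}$. Hence it suffices to show that no indecomposable summand $T_i$ of $\bar{T}$ can appear simultaneously in both $F$ and $G$.

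Suppose for contradiction such a $T_i$ exists. From the defining exact sequence $P_k\to T_k^*\to F\to 0$, composing the surjection $T_k^*\twoheadrightarrow F$ with the projection $F\twoheadrightarrow T_i$ onto the summand produces a nonzero surjection $f\colon T_k^*\to T_i$. Dually, from $0\to G\to T_k^*\to I_k$, composing the inclusion $T_i\hookrightarrow G$ with $G\hookrightarrow T_k^*$ produces a nonzero injection $g\colon T_i\to T_k^*$. Consider $gf\in\End(T_k^*)$: since $g$ is injective, the vanishing $gf=0$ would force $\im(f)\subseteq\ker(g)=0$ and hence $f=0$, so we must have $gf\ne 0$.

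By Lemma~\ref{lem:ends} the ring $\End(T_k^*)$ is a field, so the nonzero element $gf$ is invertible; this makes $f$ a split monomorphism, and combined with its surjectivity $f$ is an isomorphism. Thus $T_i\cong T_k^*$, contradicting the fact that $T_i$ and $T_k^*$ are non-isomorphic indecomposable summands of the basic tilting module $T_k^*\oplus\bar{T}$ (their dimension vectors are linearly independent by Theorem~\ref{th:hr}.2; concretely, $k\in\supp T_k^*$ while $k\notin\supp T_i\subseteq\supp\bar{T}$).

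The only delicate step is ensuring $gf\ne 0$, which hinges on the injectivity of $g$; this is built into the definition of $G$ as a kernel, so it is not a real obstacle. Once this nonvanishing is secured, the endomorphism-ring/field argument is formally identical to the one driving Lemma~\ref{lem:h1}, and the rest of the proof is essentially bookkeeping about supports.
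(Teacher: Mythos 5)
Your proof is correct and is essentially the paper's own argument: the paper proves Lemma~\ref{lem:h2} by "the same argument as Lemma~\ref{lem:h1}," i.e.\ a shared summand $T_i$ of $F$ and $G$ yields a nonzero epimorphism $T_k^*\to T_i$ and monomorphism $T_i\to T_k^*$, whose nonzero composite in the field $\End(T_k^*)$ forces $T_i\cong T_k^*$, a contradiction. The only (harmless) difference is that you get the epi/mono properties for free from the cokernel/kernel construction instead of invoking Theorem~\ref{th:hr}.1 as in Lemma~\ref{lem:h1}.
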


 This completes the definition of the matrix $B_T$ associated to a local tilting representation $T$.  We will further investigate these matrices and their relationship to the mutation of local tilting representations in Section~\ref{sec:exch_mut} after we have presented the theory of quantum cluster algebras.

 \section{Quantum Cluster Algebras}\label{sec:qca}  In this section we will define quantum cluster algebras and recall some important structure theorems which motivate the main results of this article.

 We begin defining the quantum cluster algebra associated to an $m\times n$ ($m\ge n$) matrix $\tilde{B}$ with a skew-symmetrizable principal $n\times n$ submatrix $B$, i.e. there exists a diagonal matrix $D$ such that $DB$ is skew-symmetric.  An $m\times m$ skew-symmetric matrix $\Lambda$ is compatible with $\tilde{B}$ if 
 \[\tilde{B}^t\Lambda=\left(\begin{array}{cc}D & 0\end{array}\right).\]

 Let $q$ be an indeterminate.  The initial cluster $\bfX=\{X_1,\ldots,X_m\}$ of our quantum cluster algebra will form a generating set for an $m$-dimensional quantum torus 
  \[\cT_{\Lambda,q}=\ZZ[q^{\pm\half}]\langle X_1^{\pm1},\ldots,X_m^{\pm1}: X_iX_j=q^{\lambda_{ij}}X_jX_i\rangle.\]
 Note that $\cT_{\Lambda,q}$ is an Ore-domain and so it makes sense to consider its skew-field of fractions $\cF$.  The quantum cluster algebra will be a subalgebra of $\cF$.

 For $\bfa=(a_1,\ldots,a_m)\in\ZZ^m$ we will write
 \[X^\bfa=q^{-\half\sum\limits_{i<j}\lambda_{ij}a_ia_j}X_1^{a_1}\cdots X_m^{a_m}.\]
 Note that $X^\bfa$ is invariant under the unique anti-involution of $\cT_{\Lambda,q}$, called bar-involution, which fixes each $X_i$ and sends $q$ to $q^{-1}$.  Denoting by $\Lambda(\cdot,\cdot):\ZZ^m\times\ZZ^m\to \ZZ$ the skew-symmetric bilinear form associated to $\Lambda$ we have
 \[X^\bfa X^\bfb=q^{\half\Lambda(\bfa,\bfb)}X^{\bfa+\bfb}.\]
 We define a twisted multiplication $*:\cT_{\Lambda,q}\times\cT_{\Lambda,q}\to\cT_{\Lambda,q}$ by the rule
 \[X^\bfa*X^\bfb=X^{\bfa+\bfb}.\] 

 Let $e_1,\ldots,e_m$ be the standard basis vectors of $\ZZ^m$ and write $\bfa=\sum\limits_{i=1}^m a_ie_i\in\ZZ^n$.  For $1\le k\le n$ we will write $\bfb^k_+=\sum\limits_{i:b_{ik}>0} b_{ik}e_i$ and $\bfb^k_-=\sum\limits_{i:b_{ik}<0} -b_{ik}e_i$ for the positive and negative entries of the $k^{th}$ column of $\tilde{B}$, that is if we think of the $k^{th}$ column $\bfb^k$ of $\tilde{B}$ as an element of $\ZZ^m$, then $\bfb^k=\bfb^k_+-\bfb^k_-$.

 The collection $\Sigma=(\bfX,\tilde{B})$ is called a \emph{quantum seed}.  The Berenstein-Zelevinsky mutation $\mu_k$ of the seed $(\bfX,\tilde{B})$ in direction $k$ is given by $(\mu_k\bfX,\mu_k\tilde{B})$ where $\mu_k\bfX=\{X_1,\ldots,\hat{X_k},\ldots,X_m\}\cup\{X_k'\}$ for 
 \[X_k'=X^{\bfb^k_+-e_k}+X^{\bfb^k_--e_k},\]
 $\mu_k\tilde{B}$ is the Fomin-Zelevinsky \cite{fz2} mutation of exchange matrices given by $\mu_k\tilde{B}=(b'_{ij})$ where
 \begin{equation}\label{eq:exch_mutation}
  b'_{ij}=\begin{cases}-b_{ij} & \text{ if $i=k$ or $j=k$;}\\ b_{ij}+[b_{ik}]_+b_{kj}+b_{ik}[-b_{kj}]_+ & \text{ otherwise;}\end{cases}
 \end{equation}
 where $[b]_+=max(0,b)$, and $\mu_k\Lambda$ records the quasi-commutation of the cluster $\mu_k\bfX$, see \cite{berzel} for more details on mutations of compatible pairs.  We may also describe the mutation of $\tilde{B}$ in direction $k$ via $\mu_k\tilde{B}=E\tilde{B}F$ with $m\times m$ matrix $E=(e_{ij})$ and $n\times n$ matrix $F=(f_{ij})$ given by
 \begin{equation}\label{eq:EF}
  e_{ij}=\begin{cases} \delta_{ij} & \text{if $j\ne k$;}\\ -1 & \text{if $i=j=k$;}\\ [-b_{ik}]_+ & \text{if $i\ne j = k$;} \end{cases}\quad\quad\quad f_{ij}=\begin{cases} \delta_{ij} & \text{if $i\ne k$;}\\ -1 & \text{if $i=j=k$;}\\ [b_{kj}]_+ & \text{if $i=k\ne j$.} \end{cases}
 \end{equation}
 Then we may compute the commutation matrix $\mu_k\Lambda$ of the cluster $\mu_k\bfX$ as 
 \begin{equation}\label{eq:comm_mutation}
  \mu_k\Lambda=E^t\Lambda E.
 \end{equation}
 A result of Fomin and Zelevinsky \cite{fz4} asserts that the cluster variables of $\cA_q(\tilde{B},\Lambda)$ are completely determined by the cluster variables of the principal coefficients quantum cluster algebra $\cA(\tilde{B}_P,\Lambda')$ where $\tilde{B}_P=\left(\begin{array}{c} B\\ I\end{array}\right)$ with $I$ the $n\times n$ identity matrix and $\Lambda'$ is a compatible commutation matrix.  Thus we will always assume we are dealing with an exchange matrix having principal coefficients.

 \begin{definition}
  As with classical cluster algebras we may populate the vertices of an $n$-regular tree $\TT$ with quantum seeds where edges correspond to mutations.  The quantum cluster algebra $\cA_q(\tilde{B},\Lambda)$ is the $\ZZ[q^{\pm\half}]$-subalgebra of $\cF$ generated by the cluster variables from all seeds associated to vertices of $\TT$.
 \end{definition}

 The following Theorem of Berenstein and Zelevinsky \cite{berzel} is the first important structural result concerning quantum cluster algebras.
 \begin{theorem}[Quantum Laurent Phenomenon]\label{qlp}
  The quantum cluster algebra $\cA_q(\tilde{B},\Lambda)$ is a subalgebra of $\cT_{\Lambda,q}$.
 \end{theorem}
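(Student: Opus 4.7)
The plan is to follow the original approach of Berenstein and Zelevinsky in \cite{berzel}, which adapts Fomin--Zelevinsky's strategy from the classical case to the quantum setting. The first step is to introduce the \emph{upper} quantum cluster algebra
\[\cU_q(\tilde{B},\Lambda) \;=\; \cT_{\Lambda,q} \;\cap\; \bigcap_{k=1}^n \cT_{\mu_k\Lambda,q},\]
where each $\cT_{\mu_k\Lambda,q}$ is identified with a subring of $\cF$ via the quantum Laurent expansion of the cluster $\mu_k\bfX$ in $\bfX$. The theorem is then equivalent to the inclusion $\cA_q(\tilde{B},\Lambda)\subseteq\cU_q(\tilde{B},\Lambda)$, which in turn reduces to the assertion that for every vertex $v$ of $\TT$ and every cluster variable $X'$ at $v$, the rational expression obtained by iterating the mutation formula along a path from the initial vertex to $v$ represents an element of $\cT_{\Lambda,q}$.

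I would proceed by induction on the distance from $v$ to the initial vertex. The cases of distance $0$ and $1$ are immediate: the initial cluster belongs to $\cT_{\Lambda,q}$ by construction, and a single mutation produces the binomial
\[X_k' = X^{\bfb^k_+-e_k}+X^{\bfb^k_--e_k},\]
a manifest Laurent polynomial in $\bfX$. For the inductive step, the standard reduction -- the quantum analog of the Fomin--Zelevinsky caterpillar lemma -- reduces the general case to paths of length two. Thus it suffices to verify that for any two indices $k\ne\ell$, each cluster variable of $\mu_\ell\mu_k\bfX$, written via the two-step mutation formula as a $\ZZ[q^{\pm\half}]$-rational expression in $\bfX$, has no $X_i$ in the denominator after simplification. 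This computation invokes the exchange rule~\eqref{eq:exch_mutation}, the commutation mutation \eqref{eq:comm_mutation}, and crucially the compatibility condition $\tilde{B}^t\Lambda=(D\ 0)$.

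The main obstacle is precisely this two-mutation cancellation. Tracking the $q$-powers produced by the twisted product $X^\bfa X^\bfb = q^{\half\Lambda(\bfa,\bfb)}X^{\bfa+\bfb}$ through compositions of the exchange relation is significantly more delicate than in the commutative case: the compatibility condition must be used in an essential way to guarantee that the $q$-factors arising from reordering monomials are bar-invariant and that the identification of $\cT_{\mu_k\Lambda,q}$ inside $\cF$ is canonical. Once the two-step calculation is carried out, the caterpillar argument propagates the Laurent property across all of $\TT$, yielding Theorem~\ref{qlp}.
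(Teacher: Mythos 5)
First, a point of comparison: the paper does not prove Theorem~\ref{qlp} at all. It is imported verbatim from Berenstein and Zelevinsky \cite{berzel}, so there is no internal argument to measure your proposal against; the question is whether your sketch would stand on its own as a proof of the cited result. Your outline does follow the broad strategy of \cite{berzel} (and of the classical argument in \cite{fz1}): introduce the upper bound $\cT_{\Lambda,q}\cap\bigcap_{k}\cT_{\mu_k\Lambda,q}$ and reduce the Laurent property to statements about seeds at distance at most two from the initial one. But it stops short of the actual content of that proof.

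There are two concrete gaps. The ``quantum analog of the Fomin--Zelevinsky caterpillar lemma'' that you invoke as a standard reduction is neither standard nor what \cite{berzel} do. Their argument proves that the upper bound is invariant under a single seed mutation, and this invariance concerns \emph{all} elements of the upper bound, not merely the cluster variables of $\mu_\ell\mu_k\bfX$; establishing it requires showing that any element Laurent in the cluster $\mu_k\bfX$ and in all of its adjacent clusters remains Laurent after mutating in a second direction $\ell$, which is carried out by a delicate rank-two analysis in the based quantum torus (quantum analogs of the coprimality arguments of \cite{fz1}), and it is exactly here that the compatibility condition on $(\tilde{B},\Lambda)$ and the resulting full rank of $\tilde{B}$ enter. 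Deducing the theorem from Laurentness of two-step cluster variables alone would need a different, and in your write-up unproved, induction scheme. Second, even the step you do isolate --- the two-mutation cancellation together with the bookkeeping of powers of $q$ coming from $X^\bfa X^\bfb=q^{\half\Lambda(\bfa,\bfb)}X^{\bfa+\bfb}$ --- is precisely the heart of the theorem, and your proposal explicitly defers it (``once the two-step calculation is carried out\ldots''). As written this is a plan for a proof, with both the reduction lemma and the decisive computation assumed rather than established; within the present paper the correct move is simply to cite \cite{berzel}, as the author does.
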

 \noindent This says that although the cluster variables a priori are rational functions, cancellations inevitably occur and we actually get Laurent polynomials.  A monomial in the cluster variables from a single quantum seed is called a \emph{quantum cluster monomial}.  Although the Quantum Laurent Phenomenon guarantees that each cluster monomial is an element of $\cT_{\Lambda,q}$ it is a non-trivial task to compute their initial cluster Laurent expansions. 

 We define a valued quiver $\tilde{Q}$ from a compatible pair $(\tilde{B},\Lambda)$ with principal coefficients as follows.  According to the construction in Section~\ref{sec:valued_quiver}. we may associated a valued quiver $(Q,\bfd)$ to the skew-symmetrizable principal $n\times n$ submatrix $B$ of $\tilde{B}$ where the valuation $d_i$ is the $i^{th}$ diagonal entry of the matrix $D$ occurring in the compatibility condition for $(\tilde{B},\Lambda)$.  Then we attach principal vertices $n+i\to i$ for each $1\le i\le n$ and set $d_{n+i}=d_i$ to obtain $\tilde{Q}$.  We will only be considering valued representations of $\tilde{Q}$ which are supported on $Q$ and thus we will only refer to the quiver $Q$ in the discussions that follow, however implicitly all equations/constructions involving the Grothendieck group $\cK(Q)$ or local tilting representations are happening inside $\Rep_\FF(\tilde{Q},\bfd)$.  The main result of this article is a description of the acyclic initial cluster expansion of all cluster monomials of $\cA(\tilde{B},\Lambda)$ using the representation theory of the valued quiver $(Q,\bfd)$.  We present the construction in the following section.

 \section{The Quantum Cluster Character $X_V$}\label{sec:qcc} In this section we study the quantum cluster character assigning an element of the quantum torus $\cT_{\Lambda,q}$ to each representation $V$ of $(Q,\bfd)$.  We will abbreviate $q=|\FF|$.

 \begin{definition}\label{def:qcc}
  For $V\in\Rep_\FF(Q,\bfd)$ define the \emph{quantum cluster character} $X_V$ in the quantum torus $\cT_{\Lambda,q}$ by
  \begin{equation}\label{eq:qcc}
   X_V=\sum\limits_{\bfe\in\cK(Q)} q^{-\half\langle\bfe,\bfv-\bfe\rangle}|Gr_\bfe^V|X^{-\bfe^*-{}^*(\bfv-\bfe)}
  \end{equation}
  where $Gr_\bfe^V=\{U\subset V:[U]=\bfe\}$ is the Grassmannian of all subobjects of $V$ with dimension vector $\bfe$.
 \end{definition}

 \subsection{Quantum Cluster Character Multiplication Theorems}\label{sec:mult_theorems}  In this section we prove multiplication formulas for products of quantum cluster characters.   Our first result is analogous to \cite[Theorem 12]{hub1}, \cite[Proposition 5.4.1]{qin}, and \cite[Theorem 3.5]{dingxu}.  All of their proofs are modeled on that of \cite[Theorem 12]{hub1} using Hall numbers, we will follow this approach as well. 

 Define the ``Hall number'' $F_{BC}^D$ as the number of subobjects $U\subset D$ with $U\cong C$ and $D/U\cong B$.  We also write $\varepsilon_{BC}^D$ for the size of the ``restricted $\Ext$-space'' $\Ext^1(B,C)_D\subset \Ext^1(B,C)$ consisting of those short exact sequences with middle term isomorphic to $D$.  Note that $F_{BC}^D$ and $\varepsilon_{BC}^D$ are finite since $B$, $C$, and $D$ are finite sets.  There are two well-known formulas satisfied by these quantities.  The first one verifies the associativity of the multiplication in a ``Hall algebra'' where the Hall numbers are structure constants.
 \begin{lemma}
  For any valued representations $B$, $K$, $L$, and $V$ we have the following ``associativity'' relation for Hall numbers:
  \begin{equation}\label{eq:hall-assoc}
   \sum\limits_A F_{KL}^AF_{AB}^V=\sum\limits_{A'} F_{KA'}^VF_{LB}^{A'}.
  \end{equation}
 \end{lemma}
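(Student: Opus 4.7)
The plan is to establish \eqref{eq:hall-assoc} by a double counting argument: both sides will be shown to count the cardinality of the set
\[
\mathcal{F} = \bigl\{(U_1,U_2) \,:\, 0\subset U_1\subset U_2\subset V,\ U_1\cong B,\ U_2/U_1\cong L,\ V/U_2\cong K\bigr\}.
\]

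For the left-hand side, I would fix an isomorphism class $A$ and look at the inner summand $F_{KL}^A F_{AB}^V$. The factor $F_{AB}^V$ counts submodules $U_1\subset V$ with $U_1\cong B$ and $V/U_1$ in the isoclass of $A$. Given such a $U_1$, the correspondence theorem gives a bijection between submodules $U_2$ of $V$ containing $U_1$ and submodules $\bar U$ of $V/U_1$; under this bijection the conditions $U_2/U_1\cong L$ and $V/U_2\cong K$ translate exactly to $\bar U\cong L$ and $(V/U_1)/\bar U\cong K$. Since $V/U_1$ is (any fixed representative of) $A$, the number of such $\bar U$ is $F_{KL}^A$, which depends only on the isoclass. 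Multiplying and summing over $A$ therefore enumerates all pairs $(U_1,U_2)\in\mathcal{F}$, so $\sum_A F_{KL}^A F_{AB}^V = |\mathcal{F}|$.

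For the right-hand side, I would argue dually, fixing the middle term $U_2$ instead of the bottom term $U_1$. For a given isoclass $A'$, the factor $F_{KA'}^V$ counts submodules $U_2\subset V$ with $U_2\cong A'$ and $V/U_2\cong K$, while $F_{LB}^{A'}$ counts submodules $U_1\subset U_2$ with $U_1\cong B$ and $U_2/U_1\cong L$ (this depends only on the isoclass of $U_2$, which is $A'$). Summing over $A'$ again enumerates all of $\mathcal{F}$, so $\sum_{A'} F_{KA'}^V F_{LB}^{A'} = |\mathcal{F}|$.

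The only subtle point, and the main thing to be careful about, is that the sums on both sides run over \emph{isoclasses} $A, A'$ rather than over actual representations, so one must verify that the quantities $F_{KL}^A$ and $F_{LB}^{A'}$ really are invariants of the isoclass (which is immediate from their definitions as counts of submodules). Beyond this bookkeeping, the identity is just the standard associativity of Hall multiplication and requires no use of the hereditary hypothesis, the Ringel--Euler form, or the rigidity assumptions; in particular, no Auslander--Reiten input is needed for this lemma.
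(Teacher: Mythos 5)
Your double-counting argument is correct: with the paper's convention that $F_{BC}^D$ counts subobjects $U\subset D$ with $U\cong C$ and $D/U\cong B$, both sides of \eqref{eq:hall-assoc} indeed enumerate the pairs $U_1\subset U_2\subset V$ with $U_1\cong B$, $U_2/U_1\cong L$, $V/U_2\cong K$, exactly as you describe. The paper states this lemma without proof, citing it as the standard associativity of Hall multiplication, and your filtration-counting argument is precisely that standard proof, so your approach matches the one the paper implicitly relies on.
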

 \noindent The second equation, known as ``Green's formula'', verifies the compatibility of multiplication and comultiplication of the Hall-Ringel algebra.  To state Green's formula we introduce the following useful notation:
 \[[V,W]^0:=\dim_\FF\Hom(V,W)\ \text{ and }\ [V,W]^1:=\dim_\FF\Ext^1(V,W),\ \text{ for all $V,W\in\Rep_\FF(Q,\bfd)$.} \]
 \begin{lemma} 
  For any valued representations $V$, $W$, $X$, and $Y$ 
  \begin{equation}\label{eq:green}
   \sum_E \varepsilon_{VW}^E F_{XY}^E=\sum\limits_{A,B,C,D} q^{[V,W]^0-[A,C]^0-[B,D]^0-\langle \bfa,\bfd\rangle} F_{AB}^V F_{CD}^W \varepsilon_{AC}^X \varepsilon_{BD}^Y.
  \end{equation}
 \end{lemma}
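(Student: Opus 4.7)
The plan is to prove Green's formula by a double-counting argument centered on $3 \times 3$ commutative diagrams with short exact rows and columns
\[
\begin{array}{ccccccccc}
0 & \to & D & \to & W & \to & C & \to & 0 \\
& & \downarrow & & \downarrow & & \downarrow & & \\
0 & \to & Y & \to & E & \to & X & \to & 0 \\
& & \downarrow & & \downarrow & & \downarrow & & \\
0 & \to & B & \to & V & \to & A & \to & 0
\end{array}
\]
having corner objects $A, B, C, D$ and edge objects $V, W, X, Y$.  The left hand side counts pairs $(\eta, U)$, where $\eta \in \Ext^1(V, W)$ is represented by $0 \to W \to E \to V \to 0$ and $U \subseteq E$ is a subobject with $U \cong Y$ and $E/U \cong X$.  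From such a pair the intersection $D := W \cap U$, the quotients $B := U/D$ and $C := W/D$, and the double quotient $A := E/(W + U)$ produce a unique diagram of the above form; conversely, the middle row and column of such a diagram recover the pair $(\eta, U)$.  Thus $\sum_E \varepsilon_{VW}^E F_{XY}^E$ equals the total number of $3 \times 3$ diagrams summed over all admissible corner objects $A, B, C, D$.

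Next I would rearrange the RHS as a count of the same $3 \times 3$ diagrams, but beginning instead from the outer data: the factors $F_{AB}^V$, $F_{CD}^W$, $\varepsilon_{AC}^X$, $\varepsilon_{BD}^Y$ count respectively a subobject $B \subseteq V$ with quotient $A$, a subobject $D \subseteq W$ with quotient $C$, and extensions in $\Ext^1(A, C)$ and $\Ext^1(B, D)$ with prescribed middle terms $X$ and $Y$.  Given this outer data, the middle object $E$ and the middle row and column are reconstructed by a pushout-pullback (Baer sum) construction: the obstruction to assembling a compatible $E$ lives in $\Ext^1(A, D)$, while the ambiguity in the assembly is governed by $\Hom(A, D)$.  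The hereditary identity $\langle \bfa, \bfd \rangle = [A, D]^0 - [A, D]^1$ then converts the ratio $|\Ext^1(A, D)|/|\Hom(A, D)|$ into the single Ringel-Euler contribution $q^{-\langle \bfa, \bfd \rangle}$.  The remaining factor $q^{[V,W]^0 - [A,C]^0 - [B,D]^0}$ accounts for the translation between elements of the bigger Ext group $\Ext^1(V, W)$ on the LHS and the product of extensions in $\Ext^1(A, C)$ and $\Ext^1(B, D)$ on the RHS, together with the natural $\Hom$-torsor structures that appear when one identifies $\varepsilon$-classes with subobject counts.

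The main obstacle is the careful bookkeeping of the $q$-exponents, in particular verifying that all four contributions combine precisely to $q^{[V,W]^0 - [A,C]^0 - [B,D]^0 - \langle \bfa, \bfd \rangle}$.  The crucial input is hereditarity of $\Rep_\FF(Q, \bfd)$, which forces higher Ext terms to vanish and allows the Hom/Ext discrepancy arising in the fiber count to collapse into a single Ringel-Euler term.  Alternatively, Green's formula is classical and one can appeal to the proofs of Ringel, Green, or Hubery \cite{hub1}, where the identity is derived from the compatibility of the multiplication and comultiplication in the Hall-Ringel bialgebra; the $3 \times 3$ diagram reformulation above is implicit in these approaches.
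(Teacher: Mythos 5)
The paper offers no proof of this lemma at all: it is invoked as the classical Green's formula, expressing the compatibility of multiplication and comultiplication in the Hall--Ringel algebra, and the particular $\varepsilon/F$ form stated here is the one used by Hubery \cite{hub1} (it follows from the usual Hall-number form of Green's theorem via the Riedtmann--Peng relation between $\varepsilon^E_{VW}$ and $F^E_{VW}$, together with heredity). So your fallback option --- citing Green, Ringel, or Hubery --- is exactly the route the paper takes, and to that extent your proposal is consistent with the text.

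If, however, the $3\times 3$-diagram double count is meant as a self-contained proof, there is a genuine gap. Recasting the left-hand side as a count of pairs $(\eta,U)$ with prescribed corner isomorphism classes is fine, but the decisive step --- that for fixed $B\subseteq V$ with $V/B\cong A$, fixed $D\subseteq W$ with $W/D\cong C$, and fixed classes in $\Ext^1(A,C)_X$ and $\Ext^1(B,D)_Y$, the number of pairs $(\eta,U)$ inducing this outer data is exactly $q^{[V,W]^0-[A,C]^0-[B,D]^0-\langle\bfa,\bfd\rangle}$ --- is precisely the content of Green's theorem, and your sketch replaces it with unverified heuristics (``obstruction in $\Ext^1(A,D)$,'' ``ambiguity governed by $\Hom(A,D)$,'' ``Hom-torsor structures''). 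The claimed clean bijection between pairs and $3\times 3$ diagrams also glosses over the identifications that generate the correction factors: $\varepsilon^E_{VW}$ counts Ext classes rather than exact sequences, the corners $A,B,C,D$ are determined only up to isomorphism, and the induced classes in $\Ext^1(A,C)$ and $\Ext^1(B,D)$ depend on chosen identifications with fixed representatives; tracking these automorphism and $\Hom$ ambiguities (and using heredity to see that all fibers have the same cardinality) is exactly where the exponent $[V,W]^0-[A,C]^0-[B,D]^0$ and the term $-\langle\bfa,\bfd\rangle=[A,D]^1-[A,D]^0$ come from. Carrying this out honestly is a substantial multi-page computation --- which is why the literature treats Green's formula as a theorem rather than an exercise --- so as written your outline identifies the right objects but does not establish the identity; either complete that fiber count or simply cite the classical proofs, as the paper does.
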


 Let $V$ and $W$ be representations of $(Q,\bfd)$.  From a morphism $\theta:W\to \tau V$ we get an exact sequence
 \begin{equation}\label{eq:theta-seq}
  \begin{tikzpicture}
   \matrix (m) [matrix of math nodes, row sep=3em, column sep=2.5em, text height=1.5ex, text depth=0.25ex]
    {0 & D & W & \tau V & \tau A \oplus I & 0\\};
   \path[->,font=\scriptsize]
    (m-1-1) edge (m-1-2)
    (m-1-2) edge (m-1-3)
    (m-1-3) edge node[auto] {$\theta$}(m-1-4)
    (m-1-4) edge (m-1-5)
    (m-1-5) edge (m-1-6);
  \end{tikzpicture}
 \end{equation}
 where $D=\ker\theta$, $\tau A\oplus I=\coker\theta$, $I$ is injective, and $A$ and $V$ have the same maximal projective summand $P_A=P_V$.  The following notation will be useful in the proof of the theorem:
 \[\Hom(W,\tau V)_{DAI}=\{W\xrightarrow{f} \tau V: f\ne0, \ker f\cong D, \coker f\cong \tau A\oplus I\}.\]
 The following Lemma was given in \cite{hub1}, we reproduce the proof for the convenience of the reader.  For a valued representation $C$ we write $a_C$ for the size of the automorphism group of $C$.
 \begin{lemma}\label{lem:hom-hall}\cite[Lemma 15]{hub1}
  The size of the restricted $\Hom$-space $\Hom(W,\tau V)_{DAI}$ can be computed in terms of Hall numbers, in particular we have
  \[|\Hom(W,\tau V)_{DAI}|=\sum\limits_{B,C} a_C F_{CD}^W F_{AB}^V F_{IC}^{\tau B},\]
  where $B$ contains no projective summands.
 \end{lemma}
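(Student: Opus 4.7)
The strategy is to partition $\Hom(W,\tau V)_{DAI}$ according to the isomorphism class $C$ of the image $\im\theta$, and for each such $C$ to count separately the surjection $W \twoheadrightarrow \im\theta$ and the inclusion $\im\theta \hookrightarrow \tau V$. For the first piece, I would count, for a fixed submodule $C' \subset \tau V$ with $C' \cong C$, the surjections $p \colon W \twoheadrightarrow C'$ with $\ker p \cong D$; this count is $a_C \cdot F_{CD}^W$, since each such $p$ is determined by the pair consisting of its kernel $U = \ker p \subset W$ (a submodule isomorphic to $D$ with quotient $W/U \cong C$, contributing $F_{CD}^W$) together with an isomorphism $W/U \xrightarrow{\sim} C'$ (contributing $a_C$).

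Next, I would show that the number $N_C$ of subobjects $C' \subset \tau V$ with $C' \cong C$ and $\tau V/C' \cong \tau A \oplus I$ equals $\sum_B F_{AB}^V \cdot F_{IC}^{\tau B}$, where $B$ ranges over isomorphism classes of modules with no projective summand. The plan here is to observe that the injective summand $I$ of the cokernel $\tau V/C'$ is canonically distinguished and so singles out a unique intermediate submodule $M$ with $C' \subset M \subset \tau V$, $M/C' \cong \tau A$, and $\tau V/M \cong I$. Using the equivalence $\tau$ between modules without projective summands and modules without injective summands, together with the hereditary hypothesis, I would then identify $M$ uniquely as $\tau B$ for some submodule $B \subset V$ having no projective summand and $V/B \cong A$; the common maximal projective summand $P_V = P_A$ of $V$ and $A$ is what forces $B$ to have no projective summand. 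Thus a choice of $C' \subset \tau V$ with the prescribed quotient is equivalent to the data of (i) a submodule $B \subset V$ with $V/B \cong A$, contributing $F_{AB}^V$, and (ii) a submodule $C' \subset \tau B$ with $\tau B/C' \cong I$, contributing $F_{IC}^{\tau B}$, once the iso classes of $B$ and $C$ have been fixed.

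Combining the two counts and summing over the iso class of $C$ will yield
\[
|\Hom(W,\tau V)_{DAI}| \;=\; \sum_C a_C \cdot F_{CD}^W \cdot N_C \;=\; \sum_{B,C} a_C F_{CD}^W F_{AB}^V F_{IC}^{\tau B},
\]
as desired. The main obstacle will be justifying the bijection in the middle step---identifying every submodule $M \subset \tau V$ with injective quotient as $\tau B$ for a submodule $B \subset V$ with $V/B \cong A$. This is precisely where the hereditary category structure and the fact that $A$ and $V$ share their maximal projective summand are indispensable: they place us in the regime where $\tau$ is an equivalence of categories and the identification is well-defined, so that the three short exact sequences obtained by decomposing \eqref{eq:theta-seq} through the image $C$ really do parametrize $\Hom(W,\tau V)_{DAI}$ in the manner claimed.
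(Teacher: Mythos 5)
The first half of your argument (factoring $\theta$ through its image and counting the surjections $W\twoheadrightarrow C'$ with kernel $\cong D$ as $a_C F_{CD}^W$) is correct and is essentially how the paper proceeds. The gap is in the middle step, exactly where you anticipated trouble. Your claim that the injective summand of $\tau V/C'$ is ``canonically distinguished'' and therefore singles out a \emph{unique} intermediate submodule $M$ is unjustified and false in general: Krull--Schmidt gives uniqueness of the decomposition $\tau V/C'\cong \tau A\oplus I$ only up to isomorphism, not as submodules. Whenever $\Hom(I,\tau A)\neq 0$ there are several submodules of $\tau V/C'$ isomorphic to $I$ with quotient $\cong\tau A$ (the graphs of the maps $I\to\tau A$ already give $|\Hom(I,\tau A)|$ of them), hence several intermediate modules $M$; and at this stage of the paper no hypothesis such as $\Hom(A\oplus D,I)=0$ is in force (those assumptions enter only in Theorem~\ref{th:exchange_mult}), so you cannot rule this out. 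There is also an internal inconsistency in your description: you state $M/C'\cong\tau A$ and $\tau V/M\cong I$, but the factorization you then invoke (namely $F_{IC}^{\tau B}$, i.e.\ $C'\subset\tau B$ with $\tau B/C'\cong I$, and $V/B\cong A$ giving $F_{AB}^V$) requires the opposite: $M/C'\cong I$ and $\tau V/M\cong\tau A$.

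The paper sidesteps the canonicity issue by never trying to attach $M$ to the submodule $C'$ alone. It counts the sets $\cP$ of exact sequences \emph{with specified maps}: one has a surjection from $\bigsqcup_C\cP_{CD}^W\times\cP_{\tau A\oplus I\,C}^{\tau V}$ onto $\Hom(W,\tau V)_{DAI}$ whose fibers are $\Aut(D)\times\Aut(C)\times\Aut(\tau A\oplus I)$, and the intermediate module is defined as $\ker(\pi_1\circ f)$, i.e.\ from the surjection $f:\tau V\to\tau A\oplus I$ rather than from $\ker f$. The ambiguity you would need to control is thereby absorbed into the automorphism factor $a_{\tau A\oplus I}$ (which contains $|\Hom(I,\tau A)|\,|\Hom(\tau A,I)|$). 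To repair your argument you would either have to adopt this map-level bookkeeping, or else prove the submodule-level bijection with explicit multiplicities, which is considerably more delicate than the ``canonical $M$'' you propose.
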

 \begin{proof}
  Define 
  \[\cP_{XY}^Z=\{(s,t):
   \begin{tikzpicture}[baseline=-2pt]
    \matrix (m) [matrix of math nodes, row sep=3em, column sep=2.5em, text height=1.5ex, text depth=0.25ex]
     {0 & Y & Z & X & 0\\};
    \path[->,font=\scriptsize]
     (m-1-1) edge (m-1-2)
     (m-1-2) edge node[auto] {$s$} (m-1-3) 
     (m-1-3) edge node[auto] {$t$} (m-1-4) 
     (m-1-4) edge (m-1-5);
   \end{tikzpicture} \text{ is exact}\}\]
  and write $P_{XY}^Z=|\cP_{XY}^Z|$.  It is well known that the Hall number $F_{XY}^Z$ can be computed as $P_{XY}^Z/a_Xa_Y$.
  
  For any $\theta\in\Hom(W,\tau V)_{DAI}$ the exact sequence \eqref{eq:theta-seq} can be split into two short exact sequences:
  \begin{equation}\label{eq:theta-seq2}
   \begin{tikzpicture}
    \matrix (m) [matrix of math nodes, row sep=1em, column sep=2.5em, text height=1.5ex, text depth=0.25ex]
     {0 & D & W & C & 0,\\ 0 & C & \tau V & \tau A\oplus I & 0,\\};
    \path[->,font=\scriptsize]
     (m-1-1) edge (m-1-2)
     (m-1-2) edge node[auto] {$d$} (m-1-3)
     (m-1-3) edge node[auto] {$c$} (m-1-4)
     (m-1-4) edge (m-1-5)
     (m-2-1) edge (m-2-2)
     (m-2-2) edge node[auto] {$g$} (m-2-3)
     (m-2-3) edge node[auto] {$f$} (m-2-4)
     (m-2-4) edge (m-2-5);
   \end{tikzpicture}
  \end{equation}
  such that $gc=\theta$.  Thus we get a surjective map $\bigsqcup_C\cP_{CD}^W\times \cP_{\tau A\oplus I C}^{\tau V}\to \Hom(W,\tau V)_{DAI}$ with fiber over $\theta$ isomorphic to $\Aut(D)\times \Aut(C)\times \Aut(\tau A\oplus I)$.  Since the fibers are all isomorphic for a fixed $C$ we see that
  \begin{equation*}
   |\Hom(W,\tau V)_{DAI}|=\sum_C P_{CD}^W P_{\tau A\oplus I C}^{\tau V}/a_{\tau A\oplus I} a_C a_D=\sum_C a_C F_{CD}^W F_{\tau A\oplus I C}^{\tau V}.
  \end{equation*}
  From the surjective map $f:\tau V\to\tau A\oplus I$ we get a surjective map $\varphi:\tau V\to\tau A$.  Note that the kernel of this map contains no injective summands since $\tau A$ cannot have injective summands.  Thus we may write $\ker\varphi=\tau B$ for some $B$ containing no projective summands.  Since $B$ contains no projective summands, $\Hom(B,P_V)=0$ and applying the inverse Auslander-Reiten translation $\tau^{-1}$ induces a surjective map $V\to A$ with kernel $B$, where we have used that $P_V=P_A$.  Since $\im g=\ker f$, we must have $\im g\subset\ker\varphi$, in particular $g$ defines an injective map into $\tau B$.  The discussion thus far can be described by the commutative diagram:
  \begin{center}
   \begin{tikzpicture}
    \matrix (m) [matrix of math nodes, row sep=3em, column sep=2.5em, text height=1.5ex, text depth=0.25ex]
     {0 & C & \tau V & \tau A\oplus I & 0\\ 0 &\tau B & \tau V & \tau A & \ 0\,.\\};
    \path[->,font=\scriptsize]
     (m-1-1) edge (m-1-2)
     (m-1-2) edge (m-1-3) edge node[auto] {$g$} (m-2-2)
     (m-1-3) edge (m-1-4) edge node[auto] {$\Id$} (m-2-3)
     (m-1-4) edge (m-1-5) edge node[auto] {$\pi_1$} (m-2-4)
     (m-2-1) edge (m-2-2)
     (m-2-2) edge (m-2-3) 
     (m-2-3) edge node[auto] {$\varphi$} (m-2-4) 
     (m-2-4) edge (m-2-5);
   \end{tikzpicture}
  \end{center}
  Notice that $\tau A\oplus I\cong \tau V/C$ and $\tau A\cong \tau V/\tau B$ so that $I=\ker\pi_1\cong\tau B/C=\coker g$.  From this we see that the second sequence in \eqref{eq:theta-seq2} above is equivalent to the following exact sequences:
  \begin{center}
   \begin{tikzpicture}
    \matrix (m) [matrix of math nodes, row sep=1em, column sep=2.5em, text height=1.5ex, text depth=0.25ex]
     {0 & B & V & A & 0\,,\\ 0 & C & \tau B & I & 0\,,\\};
    \path[->,font=\scriptsize]
     (m-1-1) edge (m-1-2)
     (m-1-2) edge (m-1-3) 
     (m-1-3) edge (m-1-4) 
     (m-1-4) edge (m-1-5)
     (m-2-1) edge (m-2-2)
     (m-2-2) edge (m-2-3) 
     (m-2-3) edge (m-2-4) 
     (m-2-4) edge (m-2-5);
   \end{tikzpicture}
  \end{center}
  and so we have
  \[F_{\tau A\oplus I C}^{\tau V}=\sum_B F_{AB}^V F_{IC}^{\tau B}.\]
  The result follows.
 \end{proof}
 \noindent Note that by the Auslander-Reiten formula we have $\Ext^1(V,W)\cong\Hom(W,\tau V)$.
 \begin{theorem}\label{th:exchange_mult}
  Assume $V$ and $W$ are representations of $(Q,\bfd)$ with a unique (up to scalar) nontrivial extension $E\in\Ext^1(V,W)$, in particular $\dim_{\End(V)}\Ext^1(V,W)=1$.  Let $\theta\in\Hom(W,\tau V)$ be the equivalent morphism with $A,D,I$ as above.  Furthermore assume that $\Hom(A\oplus D,I)=0=\Ext^1(A,D)$.  Then we have the following multiplication formula:
  \begin{equation}\label{eq:mult1}
   X_VX_W=q^{\half\Lambda({}^*\bfv,{}^*\bfw)}X_E+q^{\half\Lambda({}^*\bfv,{}^*\bfw)+\half\langle\bfv,\bfw\rangle-\half\langle \bfa,\bfd\rangle}X_{D\oplus A}*X^{{}^*\bfi}.
  \end{equation}
 \end{theorem}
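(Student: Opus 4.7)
The plan is to follow Hubery's strategy from the proof of \cite[Theorem 12]{hub1}, adapting the Hall-algebra argument to the quantum setting via identity \eqref{id4} of Lemma~\ref{lem:identities}, which is precisely designed to absorb the quantum commutation factors produced by the quantum torus.

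First I would expand $X_V X_W$ via \eqref{eq:qcc} and commute the two resulting monomials. Identity \eqref{id4} applied to the pair of exponents $-\bfb^*-{}^*(\bfv-\bfb)$ and $-\bfc^*-{}^*(\bfw-\bfc)$ pulls out the prefactor $q^{\half\Lambda({}^*\bfv,{}^*\bfw)}$ common to both terms on the right-hand side and yields, using the linearity of $*$,
\[X_V X_W = q^{\half\Lambda({}^*\bfv,{}^*\bfw)} \sum_{\bfb,\bfc} q^{-\half\langle\bfb,\bfv-\bfb\rangle -\half\langle\bfc,\bfw-\bfc\rangle -\half\langle\bfc,\bfv-\bfb\rangle +\half\langle\bfb,\bfw-\bfc\rangle} |Gr_\bfb^V||Gr_\bfc^W| X^{-(\bfb+\bfc)^* - {}^*(\bfv+\bfw-\bfb-\bfc)}.\]
After dividing through by $q^{\half\Lambda({}^*\bfv,{}^*\bfw)}$, the task reduces to a comparison of generating functions of subobjects of $E$ and of $D\oplus A$.

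Next I would expand each Grassmannian count as a sum of Hall numbers, $|Gr_\bfb^V| = \sum_{B',B} F_{B'B}^V / a_B$ and analogously for $W$, so that the double sum becomes indexed by quadruples of isoclasses $(B', B, C', C)$. Green's formula \eqref{eq:green}, used in reverse, identifies the resulting $F F$-sums with $\sum_{E'}\varepsilon_{B'C'}^{E'} F_{BC}^{E'}$, so that the left-hand side is reorganized as a sum over triples $(B, C, E')$ where $E' \in \Ext^1(B',C')$ is an extension whose middle term contains $B \oplus C$ as a subobject. The split pieces ($E' = B' \oplus C'$) combine, using $\Ext^1(A,D)=0$ so that extensions of $A$ by $D$ are all isomorphic to $D\oplus A$, to reproduce the Grassmannian of $D \oplus A$. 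The nonsplit pieces, by the hypothesis $\dim_{\End(V)}\Ext^1(V,W)=1$, all arise from pullbacks of the unique extension class $[E]$, producing the $|Gr_\bff^E|$ counts; here Lemma~\ref{lem:hom-hall} translates the corresponding restricted Hom-space $\Hom(W,\tau V)_{DAI}$ count into the Hall-number form needed on the right-hand side. The monomial shift $X^{{}^*\bfi}$ then arises from the injective cokernel $I$ via ${}^*\bfi = [\soc I]$ from Lemma~\ref{lem:identities}\eqref{id5}.

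The main obstacle will be the $q$-exponent bookkeeping needed to extract precisely the factor $\half\langle\bfv,\bfw\rangle - \half\langle\bfa,\bfd\rangle$ on the split summand. This requires combining the Auslander--Reiten identity $\langle\bfv,\tau\bfw\rangle = -\langle\bfw,\bfv\rangle$ from Proposition~\ref{prop:AR} with the additivity of $\langle\cdot,\cdot\rangle$ along the defining short exact sequences \eqref{eq:AD_seq1}--\eqref{eq:AD_seq3} relating $A,B,C,D,I$ to $V,W,\tau V$. The hypothesis $\Hom(A\oplus D,I)=0$ is what eliminates cross terms involving the injective summand in the Green's formula expansion, ensuring that the shift by $X^{{}^*\bfi}$ is exact and the split summand collapses cleanly to $X_{D\oplus A} * X^{{}^*\bfi}$ with the stated coefficient.
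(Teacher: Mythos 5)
You have assembled the right toolkit (the Hall-number rewriting of $X_V$, Green's formula \eqref{eq:green}, Lemma~\ref{lem:hom-hall}, the associativity \eqref{eq:hall-assoc}, the Auslander--Reiten formulas, and the two vanishing hypotheses), and your opening step --- expanding $X_VX_W$ and extracting $q^{\half\Lambda({}^*\bfv,{}^*\bfw)}$ via Lemma~\ref{lem:identities}\eqref{id4} --- agrees with the paper.  But the core of your argument is a decomposition that does not exist in the form you describe.  After expanding the product as a sum over pairs (subobject of $V$, subobject of $W$), there is no partition of that index set into ``split pieces'' assembling to $X_{D\oplus A}*X^{{}^*\bfi}$ and ``nonsplit pieces'' assembling to $X_E$.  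What actually happens (Proposition~\ref{prop:sigma} in the paper) is that for \emph{each} quadruple $(A,B,C,D)$ of quotients and subs of $V$ and $W$ the unit coefficient must be split as a sum of two $q$-weights,
\[
1=\frac{q^{[V,W]^1}-q^{[B,C]^1}}{q^{[V,V]^0}-1}+\frac{q^{[B,C]^1}-1}{q^{[V,V]^0}-1},
\]
using $[V,W]^1=[V,V]^0$; the first weight is produced by applying Green's formula to $\sum_{E'}\varepsilon_{VW}^{E'}X_{E'}$ and subtracting $X_{V\oplus W}$, while the second is produced from $X_{D\oplus A}*X^{{}^*\bfi}$ via Lemma~\ref{lem:hom-hall}, Hall associativity, the resummation $\sum a_C F_{CX}^{D'}F_{LB}^{A'}F_{IC}^{\tau B}=q^{[A',D']^1}$, and the Euler-form identities forced by the hypotheses.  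This weight-splitting identity together with the exponent identities is precisely the content of the theorem; your sketch defers it as ``bookkeeping'' while asserting a combinatorial split/nonsplit dichotomy (``nonsplit pieces arise from pullbacks of $[E]$'') that is neither proved nor compatible with how the Hall-theoretic argument can run.

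There are also misattributions that would derail the computation if carried out literally.  Lemma~\ref{lem:hom-hall} is the ingredient for the $X_{D\oplus A}*X^{{}^*\bfi}$ term (it converts $|\Hom(W,\tau V)_{DAI}|$ into $\sum a_C F_{CD}^WF_{AB}^VF_{IC}^{\tau B}$), not for producing subobject counts of $E$, which come from Green's formula.  The hypothesis $\Ext^1(A,D)=0$ is used to replace $[L,X]^0$ by $\langle\bfl,\bfx\rangle$ for a sub $L$ of $A$ and a quotient $X$ of $D$, so that the exponential dependence on $A$ and $D$ disappears and associativity can be applied --- not merely to say extensions of $A$ by $D$ split.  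And $\Hom(A\oplus D,I)=0$ enters on the $D\oplus A$ side, through the Auslander--Reiten formulas, to prove $\langle\bfk,\bfw\rangle+\langle\bfv,\bfk\rangle=\langle\bfk,\bfd\rangle+\langle\bfa,\bfk\rangle$ and its analogue for $\bfy$, rather than killing ``cross terms'' in the Green's-formula expansion.  Finally, your conversion $|Gr_\bfb^V|=\sum F_{B'B}^V/a_B$ is wrong: the Hall number already counts subobjects, so $|Gr_\bfb^V|=\sum_{B',B:[B]=\bfb}F_{B'B}^V$ with no automorphism factor; in an argument whose entire substance is exact $q$-counting, this is not a harmless slip.
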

 \noindent When $\dim\Ext^1(V,W)>1$, there exist similar multiplication formulas with more terms, see \cite{fei} and \cite{dingsheng}.
 \begin{proof}
  Note that we have 
  \[|Gr_\bfe^V|=\sum\limits_{B,C:[C]=\bfe} F_{BC}^V\]
  and thus we may rewrite the quantum cluster character as
  \[X_V=\sum\limits_{A,B} q^{-\half\langle \bfb,\bfa\rangle} F_{AB}^V X^{-\bfb^*-{}^*\bfa}.\]
  Then using Lemma~\ref{lem:identities}.3 the product of the quantum cluster characters $X_V$ and $X_W$ becomes:
  \begin{align*}
   X_VX_W
   &=\sum\limits_{A,B}q^{-\half\langle\bfb,\bfa\rangle} F_{AB}^V X^{-\bfb^*-{}^*\bfa}\sum\limits_{C,D}q^{-\half\langle\bfd,\bfc\rangle} F_{CD}^W X^{-\bfd^*-{}^*\bfc}\\
   &=\sum\limits_{A,B,C,D} F_{AB}^V F_{CD}^W q^{-\half\langle\bfb,\bfa\rangle} q^{-\half\langle\bfd,\bfc\rangle} q^{\half\Lambda(-\bfb^*-{}^*\bfa,-\bfd^*-{}^*\bfc)} X^{-(\bfb+\bfd)^*-{}^*(\bfa+\bfc)}\\
   &=q^{\half\Lambda({}^*\bfv,{}^*\bfw)}\sum\limits_{A,B,C,D} F_{AB}^V F_{CD}^W q^{\langle\bfb,\bfc\rangle} q^{-\half\langle\bfb+\bfd,\bfa+\bfc\rangle} X^{-(\bfb+\bfd)^*-{}^*(\bfa+\bfc)}.
  \end{align*}
  Our goal is to show that this equals the right hand side of equation~\eqref{eq:mult1}.  We accomplish this by cleverly rewriting each term on the right.  We begin with the following definitions:
  \begin{align*}
   \sigma_1&:=\sum_{E\not\cong V\oplus W} \frac{\varepsilon_{VW}^E}{q^{[V,V]^0}-1} X_E,\\
   \sigma_2&:=\sum\limits_{\substack{D,A,I\\ D\not\cong W}} \frac{|\Hom(W,\tau V)_{DAI}|}{q^{[V,V]^0}-1} q^{-\half\langle \bfa,\bfd\rangle}X_{D\oplus A}*X^{{}^*\bfi}.
  \end{align*}
  Since there is a unique nontrivial extension $E\in\Ext^1(V,W)$ and a corresponding unique nonzero morphism $\theta\in\Hom(W,\tau V)$, we see that both of these sums collapse to a single term.  Since $\dim_{\End(V)}\Ext^1(V,W)=1$, we have $\varepsilon_{VW}^E=|\Hom(W,\tau V)_{DAI}|=q^{[V,V]^0}-1$ and thus the right hand side of equation~\eqref{eq:mult1} may be written as $q^{\half\Lambda({}^*\bfv,{}^*\bfw)}\sigma_1+q^{\half\Lambda({}^*\bfv,{}^*\bfw)+\half\langle\bfv,\bfw\rangle}\sigma_2$.  Observe that our computation of $X_VX_W$ above combined with the following Proposition complete the proof.

  \begin{proposition}\label{prop:sigma}
   We may rewrite $\sigma_1$ and $\sigma_2$ as
   \begin{align*}
    \sigma_1&=\sum\limits_{A,B,C,D} \frac{q^{[V,W]^1}-q^{[B,C]^1}}{q^{[V,V]^0}-1}q^{\langle \bfb,\bfc\rangle}q^{-\half\langle\bfb+\bfd,\bfa+\bfc\rangle}F_{AB}^V F_{CD}^W X^{-(\bfb+\bfd)^*-{}^*(\bfa+\bfc)},\\
    \sigma_2&=\sum\limits_{A,B,C,D} \frac{q^{[B,C]^1}-1}{q^{[V,V]^0}-1} F_{AB}^V F_{CD}^W q^{-\half\langle\bfv,\bfw\rangle}q^{\langle\bfb,\bfc\rangle}q^{-\half\langle\bfb+\bfd,\bfa+\bfc\rangle}X^{-(\bfb+\bfd)^*-{}^*(\bfa+\bfc)}.
   \end{align*}
  \end{proposition}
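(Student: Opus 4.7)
My strategy is to expand both $\sigma_1$ and $\sigma_2$ via the Hall-number definition~\eqref{eq:qcc} of the quantum cluster character, interchange sums, and reduce the resulting combinatorial identities to Green's formula~\eqref{eq:green}, Lemma~\ref{lem:hom-hall}, the Auslander--Reiten formula of Proposition~\ref{prop:AR}, and the identities of Lemma~\ref{lem:identities}. Throughout, the workhorse will be the Ringel--Euler identity $[V,W]^0-[V,W]^1=\langle\bfv,\bfw\rangle=\langle\bfa+\bfb,\bfc+\bfd\rangle$, which repeatedly collapses the exponents that arise.

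For $\sigma_1$, I would split off the trivial extension by writing $\sigma_1=\frac{1}{q^{[V,V]^0}-1}(\sum_E\varepsilon_{VW}^E X_E-\varepsilon_{VW}^{V\oplus W}X_{V\oplus W})$ and noting that $\varepsilon_{VW}^{V\oplus W}=1$ under the theorem's hypotheses, since $\Ext^1(V,W)$ is one-dimensional over the field $\End(V)$. Substituting the Hall expansion $X_E=\sum_{K,L}q^{-\half\langle\bfl,\bfk\rangle}F_{KL}^E X^{-\bfl^*-{}^*\bfk}$ and swapping sums brings the inner sum $\sum_E\varepsilon_{VW}^E F_{KL}^E$ into view, to which Green's formula~\eqref{eq:green} applies, giving $\sum_{A,B,C,D}q^{[V,W]^0-[A,C]^0-[B,D]^0-\langle\bfa,\bfd\rangle}F_{AB}^V F_{CD}^W\varepsilon_{AC}^K\varepsilon_{BD}^L$. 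Because $K$ and $L$ have fixed dimension vectors $\bfa+\bfc$ and $\bfb+\bfd$, the monomial $X^{-\bfl^*-{}^*\bfk}$ is constant as $K,L$ vary, so summing out $K$ and $L$ contributes $q^{[A,C]^1+[B,D]^1}$. Applying the Ringel--Euler identity collapses the cumulative exponent to $[V,W]^1+\langle\bfb,\bfc\rangle$, which matches the first summand of the claimed formula. For the subtracted term $X_{V\oplus W}$, I would appeal to the Hall-algebra identity $|Gr_\bfl^{V\oplus W}|=\sum_{A,B,C,D}q^{[B,C]^0}F_{AB}^V F_{CD}^W$ (itself a consequence of Green's formula applied symmetrically), which after using $[B,C]^0=[B,C]^1+\langle\bfb,\bfc\rangle$ supplies exactly the $-q^{[B,C]^1}$ contribution.

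For $\sigma_2$, I would substitute Lemma~\ref{lem:hom-hall} in place of $|\Hom(W,\tau V)_{DAI}|$, expand $X_{D\oplus A}$ in Hall numbers, and absorb $X^{{}^*\bfi}$ into the monomial via the twisted product $X^\bfa*X^\bfb=X^{\bfa+\bfb}$, using Lemma~\ref{lem:identities}(5) to identify ${}^*\bfi$ with $[\soc I]$. The subobjects of $D\oplus A$ are parametrized by applying Green's formula to the split sequence $0\to D\to D\oplus A\to A\to 0$, which under the hypothesis $\Ext^1(A,D)=0$ decomposes $F_{KL}^{D\oplus A}$ cleanly into products of Hall numbers for $A$ and for $D$. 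The Auslander--Reiten formula $\langle\bfv,\tau\bfw\rangle=-\langle\bfw,\bfv\rangle$ converts the $\tau$-twisted Euler pairings arising from the factor $F_{IC}^{\tau B}$ into ordinary Euler pairings on $\cK(Q)$, and parts (3)--(4) of Lemma~\ref{lem:identities} align the $\Lambda$-shifts with the target exponent $X^{-(\bfb+\bfd)^*-{}^*(\bfa+\bfc)}$; the extra factor $q^{-\half\langle\bfv,\bfw\rangle}$ appears naturally from the difference between $\Lambda({}^*\bfv,{}^*\bfw)$ and the relevant pairings of subobject dimension vectors.

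The hard part is purely combinatorial: each of Green's formula, Lemma~\ref{lem:hom-hall}, the Auslander--Reiten formula, the dualities $(\cdot)^*,{}^*(\cdot)$, and the twisted multiplication contributes a distinct bilinear-form factor, and these must collapse exactly to $q^{\langle\bfb,\bfc\rangle}q^{-\half\langle\bfb+\bfd,\bfa+\bfc\rangle}$ multiplied by the prescribed coefficients $q^{[V,W]^1}-q^{[B,C]^1}$ and $q^{[B,C]^1}-1$. For $\sigma_1$ the subtle step is establishing the Hall-algebra identity for subobjects of $V\oplus W$ that isolates the $q^{[B,C]^1}$ term; for $\sigma_2$ the main hurdle is threading $F_{IC}^{\tau B}$ through the Auslander--Reiten translation and then through the $\Lambda$-shift coming from $X^{{}^*\bfi}$. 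Once the Hall-algebra setup is in place, the remaining bookkeeping with $\langle\cdot,\cdot\rangle$ and $\Lambda(\cdot,\cdot)$ is long but mechanical.
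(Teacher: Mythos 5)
Your $\sigma_1$ argument is essentially the paper's: split off $E=V\oplus W$ using $\varepsilon_{VW}^{V\oplus W}=1$, apply Green's formula \eqref{eq:green}, sum out the middle terms via $\sum_X\varepsilon_{AC}^X=q^{[A,C]^1}$, $\sum_Y\varepsilon_{BD}^Y=q^{[B,D]^1}$, and compare with the expansion of $X_{V\oplus W}$, whose coefficient $q^{[B,C]^0}$ is the fiber count $|\Hom(B,W/C)|$ exactly as in Lemma~\ref{lem:aff_fiber}. (That direct-sum subobject identity is not ``Green's formula applied symmetrically'' --- Green's formula cannot isolate the split extension --- but the identity itself is correct, so this is a minor misattribution.)

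The $\sigma_2$ half has a genuine gap. After substituting Lemma~\ref{lem:hom-hall} and expanding $X_{D\oplus A}$, the coefficient of each monomial is $q^{-\half\langle\bfa,\bfd\rangle+\langle\bfl,\bfx\rangle-\half\langle\bfl+\bfy,\bfk+\bfx\rangle}$ (here $\Ext^1(A,D)=0$ is used only to replace $[L,X]^0$ by $\langle\bfl,\bfx\rangle$), and this still depends on $\bfa$ and $\bfd$. The crux of the proof is to remove that dependence: the hypothesis $\Hom(A\oplus D,I)=0$ gives $\Hom(K,I)=\Hom(Y,I)=0$, and with the Auslander--Reiten formula this yields $\langle\bfk,\bfw-\bfd\rangle=-\langle\bfv-\bfa,\bfk\rangle$ and $\langle\bfy,\bfw-\bfd\rangle=-\langle\bfv-\bfa,\bfy\rangle$, whence the exponent equals $\langle\bfv-\bfk,\bfw-\bfy\rangle-\half\langle\bfv-\bfk+\bfy,\bfk+\bfw-\bfy\rangle-\half\langle\bfv,\bfw\rangle$; only then can the associativity of Hall numbers \eqref{eq:hall-assoc} collapse the sums over $A$ and $D$, after which Lemma~\ref{lem:hom-hall} is run \emph{backwards}, $\sum a_CF_{CX}^{D'}F_{LB}^{A'}F_{IC}^{\tau B}=q^{[D',\tau A']^0}=q^{[A',D']^1}$, producing the coefficient $q^{[B,C]^1}$; and one must first drop the restriction $D\not\cong W$ by subtracting the $D=W$, $A=V$ term, which is what produces the $-1$ in $q^{[B,C]^1}-1$. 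None of these steps appears in your outline: you never invoke \eqref{eq:hall-assoc}, you make no use of $\Hom(A\oplus D,I)=0$, and you attribute the factor $q^{-\half\langle\bfv,\bfw\rangle}$ to the $\Lambda$-identities of Lemma~\ref{lem:identities}(3)--(4), but no $\Lambda$ occurs in the statement or proof of this proposition (all $\Lambda$-bookkeeping happens in the surrounding proof of Theorem~\ref{th:exchange_mult}); that factor comes from the Euler-form identity above, and the monomials are matched via ${}^*\bfi={}^*\tau\bfb-{}^*\bfc=-\bfb^*-{}^*\bfc$, not via Lemma~\ref{lem:identities}(5). Declaring the remainder ``long but mechanical'' skips precisely the non-mechanical reorganization that makes the stated formula for $\sigma_2$ true.
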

  \begin{proof}
   We begin with $\sigma_1$.  Using Green's formula \eqref{eq:green} we get
  \begin{align*}
   \sum_E \varepsilon_{VW}^E X_E
   &= \sum\limits_{E,X,Y} \varepsilon_{VW}^E q^{-\half\langle\bfy,\bfx\rangle} F_{XY}^E X^{-\bfy^*-{}^*\bfx}\\
   &= \sum\limits_{A,B,C,D,X,Y}q^{[V,W]^0-[A,C]^0-[B,D]^0-\langle \bfa,\bfd\rangle}q^{-\half\langle\bfb+\bfd,\bfa+\bfc\rangle}F_{AB}^V F_{CD}^W \varepsilon_{AC}^X \varepsilon_{BD}^Y X^{-\bfy^*-{}^*\bfx}\\
   &= \sum\limits_{A,B,C,D}q^{[V,W]^1}q^{\langle\bfb,\bfc\rangle}q^{-\half\langle\bfb+\bfd,\bfa+\bfc\rangle}F_{AB}^V F_{CD}^W X^{-(\bfb+\bfd)^*-{}^*(\bfa+\bfc)},
  \end{align*}
  where the last equality comes from the identities
  \begin{align*}
   \sum_X \varepsilon_{AC}^X&=|\Ext^1(A,C)|=q^{[A,C]^1},\\
   \sum_Y \varepsilon_{BD}^Y&=|\Ext^1(B,D)|=q^{[B,D]^1}.
  \end{align*}
  But the quantum cluster character gives
  \begin{align*}
   X_{V\oplus W}
   &=\sum\limits_{X,Y} q^{-\half\langle \bfy,\bfx\rangle} F_{XY}^{V\oplus W} X^{-\bfy^*-{}^*\bfx}\\
   &=\sum\limits_{A,B,C,D} q^{-\half\langle\bfb+\bfd,\bfa+\bfc\rangle} q^{[B,C]^0} F_{AB}^V F_{CD}^W X^{-(\bfb+\bfd)^*-{}^*(\bfa+\bfc)}.
  \end{align*}
  Since $\varepsilon_{VW}^{V\oplus W}=1$ we may rewrite $\sigma_1$ as 
  \begin{align*}
   \sigma_1
   &=\sum_{E\not\cong V\oplus W} \frac{\varepsilon_{VW}^E}{q^{[V,V]^0}-1} X_E\\
   &=\frac{\sum_E \varepsilon_{VW}^E X_E-X_{V\oplus W}}{q^{[V,V]^0}-1}\\
   &=\sum\limits_{A,B,C,D} \frac{q^{[V,W]^1}-q^{[B,C]^1}}{q^{[V,V]^0}-1}q^{\langle\bfb,\bfc\rangle}q^{-\half\langle\bfb+\bfd,\bfa+\bfc\rangle}F_{AB}^V F_{CD}^W X^{-(\bfb+\bfd)^*-{}^*(\bfa+\bfc)}.\\
  \end{align*}

  Now we move to $\sigma_2$.  Notice that by the Auslander-Reiten formula we have ${}^*\tau\bfb=-\bfb^*$.  And thus by Lemma~\ref{lem:hom-hall} we have 
   \begin{align*}
    \sigma_2
    &=\sum\limits_{\substack{A,D,I,K,L,X,Y\\ D\not\cong W}} \frac{|\Hom(W,\tau V)_{DAI}|}{q^{[V,V]^0}-1} q^{-\half\langle\bfa,\bfd\rangle} q^{[L,X]^0-\half\langle \bfl+\bfy,\bfk+\bfx\rangle} F_{KL}^A F_{XY}^D X^{-(\bfl+\bfy)^*-{}^*(\bfk+\bfx)+{}^*\bfi}\\
    &=\sum\limits_{\substack{A,B,C,D,I,K,L,X,Y\\ D\not\cong W}} \frac{a_C F_{CD}^W F_{AB}^V F_{IC}^{\tau B}}{q^{[V,V]^0}-1} q^{-\half\langle\bfa,\bfd\rangle} q^{[L,X]^0-\half\langle \bfl+\bfy,\bfk+\bfx\rangle} F_{KL}^A F_{XY}^D X^{-(\bfl+\bfy+\bfb)^*-{}^*(\bfk+\bfx+\bfc)}.
   \end{align*}
   By assumption $\Ext^1(A,D)=0$ and thus $\Ext^1(L,X)=0$.  So $\sigma_2$ becomes
   \begin{align*}
    \sigma_2
    &=\sum\limits_{\substack{A,B,C,D,I,K,L,X,Y\\ D\not\cong W}} \frac{a_C F_{CD}^W F_{AB}^V F_{IC}^{\tau B}}{q^{[V,V]^0}-1} q^{-\half\langle\bfa,\bfd\rangle} q^{\langle \bfl,\bfx\rangle-\half\langle \bfl+\bfy,\bfk+\bfx\rangle} F_{KL}^A F_{XY}^D X^{-(\bfl+\bfy+\bfb)^*-{}^*(\bfk+\bfx+\bfc)}.
   \end{align*}
   In the case $D=W$, we have $A=V$ and $C=B=I=0$.  Thus removing the condition $D\not\cong W$ we get
   \begin{align*}
    \sigma_2
    &=\sum\limits_{A,B,C,D,I,K,L,X,Y} \frac{a_C F_{CD}^W F_{AB}^V F_{IC}^{\tau B}}{q^{[V,V]^0}-1} q^{-\half\langle \bfa,\bfd\rangle}q^{\langle \bfl,\bfx\rangle-\half\langle \bfl+\bfy,\bfk+\bfx\rangle} F_{KL}^A F_{XY}^D X^{-(\bfl+\bfy+\bfb)^*-{}^*(\bfk+\bfx+\bfc)}\\
    &\quad-\sum\limits_{K,L,X,Y} \frac{1}{q^{[V,V]^0}-1} F_{KL}^V F_{XY}^W q^{-\half\langle \bfv,\bfw\rangle}q^{\langle \bfl,\bfx\rangle-\half\langle \bfl+\bfy,\bfk+\bfx\rangle} X^{-(\bfl+\bfy)^*-{}^*(\bfk+\bfx)}.
   \end{align*}
   Now we aim to remove the exponential dependence on $A$, $B$, $C$, $D$, $I$, $L$, and $X$ so that we may apply the associativity of Hall numbers and another simplifying equality.  To that end we claim the following identities:
   \begin{align*}
    \langle\bfk,\bfw\rangle+\langle\bfv,\bfk\rangle &= \langle\bfk,\bfd\rangle+\langle\bfa,\bfk\rangle,\\
    \langle\bfv,\bfy\rangle+\langle\bfy,\bfw\rangle &= \langle\bfa,\bfy\rangle+\langle\bfy,\bfd\rangle.
   \end{align*}
   Indeed, recall that by assumption $\Hom(D,I)=\Hom(A,I)=0$ and so $\Hom(K,I)=\Hom(Y,I)=0$.  Combining these observations with the Auslander-Reiten formulas of Proposition~\ref{prop:AR} gives
   \begin{align*}
    \langle\bfk,\bfw-\bfd\rangle &= \langle\bfk,\tau\bfv-\tau\bfa-\bfi\rangle = \langle\bfk,\tau\bfv-\tau\bfa\rangle = -\langle\bfv-\bfa,\bfk\rangle,\\
    \langle\bfy,\bfw-\bfd\rangle &= \langle\bfy,\tau\bfv-\tau\bfa-\bfi\rangle = \langle\bfy,\tau\bfv-\tau\bfa\rangle = -\langle\bfv-\bfa,\bfy\rangle.
   \end{align*}
   These now give the desired identity:
   \begin{align*}
    &-\half\langle\bfa,\bfd\rangle+\langle\bfl,\bfx\rangle-\half\langle\bfl+\bfy,\bfk+\bfx\rangle\\
    &\quad\quad= -\half\langle\bfa,\bfd\rangle+\langle\bfa-\bfk,\bfd-\bfy\rangle-\half\langle\bfa-\bfk+\bfy,\bfk+\bfd-\bfy\rangle\\
    &\quad\quad= -\half\langle\bfa,\bfy\rangle-\half\langle\bfy,\bfd\rangle-\half\langle\bfk,\bfd\rangle-\half\langle\bfa,\bfk\rangle+\half\langle\bfk,\bfy\rangle+\half\langle\bfk,\bfk\rangle-\half\langle\bfy,\bfk\rangle+\half\langle\bfy,\bfy\rangle\\
    &\quad\quad= -\half\langle\bfv,\bfy\rangle-\half\langle\bfy,\bfw\rangle-\half\langle\bfk,\bfw\rangle-\half\langle\bfv,\bfk\rangle+\half\langle\bfk,\bfy\rangle+\half\langle\bfk,\bfk\rangle-\half\langle\bfy,\bfk\rangle+\half\langle\bfy,\bfy\rangle\\
    &\quad\quad= \langle\bfv-\bfk,\bfw-\bfy\rangle-\half\langle\bfv-\bfk+\bfy,\bfk+\bfw-\bfy\rangle-\half\langle\bfv,\bfw\rangle.
   \end{align*}
   Now we may rewrite $\sigma_2$ as
   \begin{align*}
    \sigma_2
    &=\sum\limits_{A,B,C,D,I,K,L,X,Y} \frac{a_C F_{CD}^W F_{AB}^V F_{IC}^{\tau B}}{q^{[V,V]^0}-1} q^{\langle\bfv-\bfk,\bfw-\bfy\rangle-\half\langle\bfv-\bfk+\bfy,\bfk+\bfw-\bfy\rangle-\half\langle\bfv,\bfw\rangle} \times\\
    &\quad\quad\quad\quad\quad\quad\quad\quad\quad\quad\quad\quad\quad\quad\quad\quad\quad\quad\quad\times F_{KL}^A F_{XY}^D X^{-(\bfl+\bfy+\bfb)^*-{}^*(\bfk+\bfx+\bfc)}\\
    &\quad-\sum\limits_{K,L,X,Y} \frac{1}{q^{[V,V]^0}-1} F_{KL}^V F_{XY}^W q^{-\half\langle\bfv,\bfw\rangle}q^{\langle\bfl,\bfx\rangle-\half\langle \bfl+\bfy,\bfk+\bfx\rangle} X^{-(\bfl+\bfy)^*-{}^*(\bfk+\bfx)}
   \end{align*}
   where the dependence on $A$ only occurs in the product $F_{AB}^VF_{KL}^A$ and the $D$ dependence only occurs in the product $F_{CD}^WF_{XY}^D$.  In particular, the associativity of Hall numbers \eqref{eq:hall-assoc} applies and $\sigma_2$ may be rewritten as
   \begin{align*}
    \sigma_2
    &=\sum\limits_{A',B,C,D',I,K,L,X,Y} \frac{a_C F_{CX}^{D'} F_{LB}^{A'} F_{IC}^{\tau B}}{q^{[V,V]^0}-1} q^{\langle \bfv-\bfk,\bfw-\bfy\rangle-\half\langle \bfv-\bfk+\bfy,\bfk+\bfw-\bfy\rangle-\half\langle \bfv,\bfw\rangle} \times\\
    &\quad\quad\quad\quad\quad\quad\quad\quad\quad\quad\quad\quad\quad\quad\quad\quad\quad\quad\quad\quad\times F_{KA'}^V F_{D'Y}^W X^{-(\bfa'+\bfy)^*-{}^*(\bfk+\bfd')}\\
    &\quad-\sum\limits_{K,L,X,Y} \frac{1}{q^{[V,V]^0}-1} F_{KL}^V F_{XY}^W q^{-\half\langle\bfv,\bfw\rangle}q^{\langle \bfl,\bfx\rangle-\half\langle \bfl+\bfy,\bfk+\bfx\rangle} X^{-(\bfl+\bfy)^*-{}^*(\bfk+\bfx)}.
   \end{align*}
   By Lemma~\ref{lem:hom-hall} and the Auslander-Reiten formula~\eqref{eq:AR} we have
   \[\sum\limits_{I,L,X}\sum\limits_{B,C} a_C F_{CX}^{D'} F_{LB}^{A'} F_{IC}^{\tau B}=\sum\limits_{I,L,X}|\Hom(D',\tau A')_{XLI}| =q^{[D',\tau A']^0}=q^{[A',D']^1}.\]
   So that $\sigma_2$ becomes
   \begin{align*}
    \sigma_2 
    &=\sum\limits_{A',D',K,Y} \frac{q^{[A',D']^1}}{q^{[V,V]^0}-1} q^{\langle\bfv-\bfk,\bfw-\bfy\rangle-\half\langle\bfv-\bfk+\bfy,\bfk+\bfw-\bfy\rangle-\half\langle\bfv,\bfw\rangle} F_{KA'}^V F_{D'Y}^W X^{-(\bfa'+\bfy)^*-{}^*(\bfk+\bfd')}\\
    &\quad-\sum\limits_{K,L,X,Y} \frac{1}{q^{[V,V]^0}-1} F_{KL}^V F_{XY}^W q^{-\half\langle\bfv,\bfw\rangle}q^{\langle \bfl,\bfx\rangle-\half\langle \bfl+\bfy,\bfk+\bfx\rangle} X^{-(\bfl+\bfy)^*-{}^*(\bfk+\bfx)}\\
    &=\sum\limits_{A',D',K,Y} \frac{q^{[A',D']^1}}{q^{[V,V]^0}-1} F_{KA'}^V F_{D'Y}^W q^{-\half\langle\bfv,\bfw\rangle}q^{\langle\bfa',\bfd'\rangle-\half\langle \bfa'+\bfy,\bfk+\bfd'\rangle} X^{-(\bfa'+\bfy)^*-{}^*(\bfk+\bfd')}\\
    &\quad-\sum\limits_{K,L,X,Y} \frac{1}{q^{[V,V]^0}-1} F_{KL}^V F_{XY}^W q^{-\half\langle\bfv,\bfw\rangle}q^{\langle \bfl,\bfx\rangle-\half\langle \bfl+\bfy,\bfk+\bfx\rangle} X^{-(\bfl+\bfy)^*-{}^*(\bfk+\bfx)}\\
    &=\sum\limits_{A,B,C,D} \frac{q^{[B,C]^1}-1}{q^{[V,V]^0}-1} F_{AB}^V F_{CD}^W q^{-\half\langle\bfv,\bfw\rangle}q^{\langle\bfb,\bfc\rangle-\half\langle\bfb+\bfd,\bfa+\bfc\rangle} X^{-(\bfb+\bfd)^*-{}^*(\bfa+\bfc)}.
   \end{align*}
  \end{proof}
  \noindent This completes the proof of Proposition~\ref{prop:sigma} and thus of Theorem~\ref{th:exchange_mult}.
 \end{proof}

 Our final result of this section is analogous to \cite[Theorem 17]{hub1}, \cite[Proposition 5.4.1]{qin}, and \cite[Theorem 3.8]{dingxu}.  
 We again follow the Hall number approach of \cite[Theorem 17]{hub1}.  Let $W$ and $I$ be valued representations of $Q$, where $I$ is injective.  Write $P=\nu^{-1}(I)$ and note that $P$ is projective with $\soc I\cong P/\rad P$, moreover $\End(I)\cong\End(P)$.  From morphisms $\theta:W\to I$ and $\gamma:P\to W$ we get exact sequences
 \begin{center}
  \begin{tikzpicture}
   \matrix (m) [matrix of math nodes, row sep=1em, column sep=2.5em, text height=1.5ex, text depth=0.25ex]
    {0 & G & W & I & I' & 0\,,\\ 0 & P' & P & W & F & 0\,.\\};
   \path[->,font=\scriptsize]
    (m-1-1) edge (m-1-2)
    (m-1-2) edge (m-1-3)
    (m-1-3) edge node[auto] {$\theta$} (m-1-4)
    (m-1-4) edge (m-1-5)
    (m-1-5) edge (m-1-6)
    (m-2-1) edge (m-2-2)
    (m-2-2) edge (m-2-3)
    (m-2-3) edge node[auto] {$\gamma$} (m-2-4)
    (m-2-4) edge (m-2-5)
    (m-2-5) edge (m-2-6);
  \end{tikzpicture}
 \end{center}
 where $G=\ker\theta$, $I'=\coker\theta$ is injective, $P'=\ker\gamma$ is projective, and $F=\coker\gamma$.  We introduce the following notation needed for the proof of the theorem:
 \begin{align*}
  \Hom(W,I)_{GI'}&=\{W\xrightarrow{f}I:f\ne0, \ker f=G, \coker f=I'\},\\
  \Hom(P,W)_{P'F}&=\{P\xrightarrow{f}W:f\ne0, \ker f=P', \coker f=F\}.
 \end{align*}
 \begin{lemma}\label{lem:hom-hall2}
  The size of the restricted $\Hom$-spaces $\Hom(W,I)_{GI'}$ and $\Hom(P,W)_{P'F}$ can be computed in terms of Hall numbers, in particular we have
  \begin{align}
   \label{eq:hom-hall2.1}|\Hom(W,I)_{GI'}|&=\sum_A a_A F_{AG}^W F_{I'A}^I,\\
   \label{eq:hom-hall2.2}|\Hom(P,W)_{P'F}|&=\sum_B a_B F_{FB}^W F_{BP'}^P.
  \end{align}
 \end{lemma}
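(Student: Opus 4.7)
The plan is to mirror the first half of the proof of Lemma~\ref{lem:hom-hall}, which handled the analogous situation for $\Hom(W,\tau V)_{DAI}$; here the situation is actually simpler because no Auslander--Reiten translation is required and no injective/projective summands need to be split off.

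For identity~\eqref{eq:hom-hall2.1}, I start by recalling the set
\[\cP_{XY}^Z=\bigl\{(s,t): 0\to Y\xrightarrow{s} Z\xrightarrow{t} X\to 0 \text{ is exact}\bigr\}\]
and the well-known formula $F_{XY}^Z=|\cP_{XY}^Z|/(a_X a_Y)$. Given any $\theta\in\Hom(W,I)_{GI'}$, the image $A:=\im\theta$ is intrinsically determined by $\theta$, and $\theta$ factors as an epimorphism $W\twoheadrightarrow A$ followed by a monomorphism $A\hookrightarrow I$; this gives two short exact sequences
\[0\to G\to W\to A\to 0\quad\text{and}\quad 0\to A\to I\to I'\to 0.\]
Conversely, any compatible pair of such sequences composes to give a morphism in $\Hom(W,I)_{GI'}$. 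This produces a surjection
\[\bigsqcup_A \cP_{AG}^W\times \cP_{I'A}^I \twoheadrightarrow \Hom(W,I)_{GI'},\]
and the fiber over a fixed $\theta$ with image $A$ consists of the choices of isomorphism of the three outer terms, i.e.\ is a torsor under $\Aut(G)\times\Aut(A)\times\Aut(I')$. Dividing and applying the Hall number formula twice yields
\[|\Hom(W,I)_{GI'}|=\sum_A \frac{P_{AG}^W\,P_{I'A}^I}{a_G\,a_A\,a_{I'}}=\sum_A a_A\,F_{AG}^W\,F_{I'A}^I,\]
which is exactly~\eqref{eq:hom-hall2.1}.

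For identity~\eqref{eq:hom-hall2.2} I run the same argument dually: any $\gamma\in\Hom(P,W)_{P'F}$ factors through its image $B:=\im\gamma$, producing
\[0\to P'\to P\to B\to 0\quad\text{and}\quad 0\to B\to W\to F\to 0,\]
so composition gives a surjection $\bigsqcup_B \cP_{BP'}^P\times \cP_{FB}^W\twoheadrightarrow \Hom(P,W)_{P'F}$ with fibers torsors under $\Aut(P')\times\Aut(B)\times\Aut(F)$, and the counting proceeds identically.

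I do not anticipate a genuine obstacle here. The only mild subtlety is verifying that the ``image'' construction really is a bijective correspondence up to the $\Aut$-action, i.e.\ that no extra identifications occur beyond the obvious rescalings of $G$, $A$, $I'$ (respectively $P'$, $B$, $F$); this is immediate from the fact that in a factorization $W\twoheadrightarrow A\hookrightarrow I$ the intermediate module is determined up to unique isomorphism by $\theta$ once a representative of $A$ is chosen, so each $\theta$ really does contribute $a_G a_A a_{I'}$ pairs $(s,t)$ on the left-hand side.
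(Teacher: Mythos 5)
Your argument is correct and is essentially identical to the paper's proof: factor each morphism through its image to obtain the two short exact sequences, count the resulting surjection $\bigsqcup_A \cP_{AG}^W\times\cP_{I'A}^I\to\Hom(W,I)_{GI'}$ with fibers $\Aut(G)\times\Aut(A)\times\Aut(I')$, and convert via $F_{XY}^Z=P_{XY}^Z/(a_Xa_Y)$, with the dual argument for $\Hom(P,W)_{P'F}$. Nothing further is needed.
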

 \begin{proof}
  Notice that the $\theta$ exact sequence above can be split into the following two short exact sequences:
  \begin{center}
   \begin{tikzpicture}
    \matrix (m) [matrix of math nodes, row sep=1em, column sep=2.5em, text height=1.5ex, text depth=0.25ex]
     {0 & G & W & A & 0\,,\\ 0 & A & I & I' & 0\,,\\};
    \path[->,font=\scriptsize]
     (m-1-1) edge (m-1-2)
     (m-1-2) edge (m-1-3)
     (m-1-3) edge node[auto] {$c$} (m-1-4)
     (m-1-4) edge (m-1-5)
     (m-2-1) edge (m-2-2)
     (m-2-2) edge node[auto] {$g$} (m-2-3)
     (m-2-3) edge (m-2-4)
     (m-2-4) edge (m-2-5);
   \end{tikzpicture}
  \end{center}
  where $gc=\theta$.  Thus we have a surjective map $\bigsqcup_A\cP_{AG}^W\times\cP_{I'A}^I\to\Hom(W,I)_{GI'}$ with fiber over a morphism $\theta$ isomorphic to $\Aut(G)\times\Aut(A)\times\Aut(I')$.  Then the identity \eqref{eq:hom-hall2.1} follows from the equality
  \[|\Hom(W,I)_{GI'}|=\sum_A P_{AG}^WP_{I'A}^I/a_Ga_Aa_{I'}.\]

  Now notice the $\gamma$ exact sequence above can be split into the following two short exact sequences:
  \begin{center}
   \begin{tikzpicture}
    \matrix (m) [matrix of math nodes, row sep=1em, column sep=2.5em, text height=1.5ex, text depth=0.25ex]
     {0 & P' & P & B & 0\,,\\ 0 & B & W & F & 0\,,\\};
    \path[->,font=\scriptsize]
     (m-1-1) edge (m-1-2)
     (m-1-2) edge (m-1-3)
     (m-1-3) edge node[auto] {$d$} (m-1-4)
     (m-1-4) edge (m-1-5)
     (m-2-1) edge (m-2-2)
     (m-2-2) edge node[auto] {$h$} (m-2-3)
     (m-2-3) edge (m-2-4)
     (m-2-4) edge (m-2-5);
   \end{tikzpicture}
  \end{center}
  where $hd=\gamma$.  Thus we have a surjective map $\bigsqcup_B\cP_{BP'}^P\times\cP_{FB}^W\to\Hom(P,W)_{P'F}$ with fiber over a morphism $\gamma$ isomorphic to $\Aut(P')\times\Aut(B)\times\Aut(F)$.  Then the identity \eqref{eq:hom-hall2.2} follows from the equality
  \[|\Hom(P,W)_{P'F}|=\sum_B P_{BP'}^PP_{FB}^W/a_{P'}a_Ba_F.\]
 \end{proof}

 \begin{theorem}\label{th:proj_mult}
  Let $W$ and $I$ be valued representations of $Q$ with $I$ injective.  Write $P=\nu^{-1}(I)$.  Assume that there exist unique (up to scalar) morphisms $f\in\Hom(W,I)$ and $g\in\Hom(P,W)$, in particular $\dim_{\End(I)}\Hom(W,I)=\dim_{\End(P)}\Hom(P,W)=1$.  Define $F$, $G$, $I'$, $P'$ as above and assume further that $\Hom(P',F)=\Hom(G,I')=0$.  Then we have the following multiplication formula:
  \begin{align}\label{eq:proj_mult}
   X_WX^{{}^*\bfi}
   &=q^{-\half\Lambda({}^*\bfw,{}^*\bfi)}X_G*X^{{}^*\bfi'}+q^{-\half\Lambda({}^*\bfw,{}^*\bfi)-\half[I,I]^0}X_F*X^{{\bfp'}^*}.
  \end{align}
 \end{theorem}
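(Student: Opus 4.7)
The plan is to mirror the strategy of Theorem~\ref{th:exchange_mult}, with the two summands on the right-hand side corresponding to the unique morphisms $\theta:W\to I$ and $\gamma:P\to W$ rather than to a unique non-split extension.

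First I expand the left-hand side using the quantum cluster character formula, writing
\[X_WX^{{}^*\bfi}=\sum_{A,B}q^{-\half\langle\bfb,\bfa\rangle+\half\Lambda(-\bfb^*-{}^*\bfa,\,{}^*\bfi)}F^W_{AB}X^{-\bfb^*-{}^*(\bfa-\bfi)}.\]
I apply Lemma~\ref{lem:identities}.4 to the second slot of $\Lambda$ with $\bfc=0$ and ``$\bfw$''$=-\bfi$, so that $-\bfc^*-{}^*(\bfw-\bfc)$ reduces to ${}^*\bfi$; since $\bfa+\bfb=\bfw$, this simplification yields $\Lambda(-\bfb^*-{}^*\bfa,{}^*\bfi)=-\Lambda({}^*\bfw,{}^*\bfi)-\langle\bfb,\bfi\rangle$. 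This extracts the overall factor $q^{-\half\Lambda({}^*\bfw,{}^*\bfi)}$ that appears in both terms on the right, leaving a residual sum $\sum_{A,B}q^{-\half\langle\bfb,\bfa+\bfi\rangle}F^W_{AB}X^{-\bfb^*-{}^*(\bfa-\bfi)}$ that I must split between the two $\sigma$-analogs below.

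Next I introduce the sigma analogs
\[\sigma_1=\sum_{G'',I''}\frac{|\Hom(W,I)_{G''I''}|}{q^{[I,I]^0}-1}X_{G''}*X^{{}^*\bfi''},\qquad \sigma_2=\sum_{P'',F''}\frac{|\Hom(P,W)_{P''F''}|}{q^{[P,P]^0}-1}X_{F''}*X^{\bfp''^*},\]
summed over $(G'',I'')$ and $(P'',F'')$ arising from nonzero morphisms $W\to I$ and $P\to W$ respectively. The uniqueness-up-to-scalar hypotheses force $|\Hom(W,I)_{GI'}|=q^{[I,I]^0}-1$ and $|\Hom(P,W)_{P'F}|=q^{[P,P]^0}-1$; moreover, since $P=\nu^{-1}(I)$ implies $\End(I)\cong\End(P)$ (so $[I,I]^0=[P,P]^0$), both denominators coincide. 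Thus $\sigma_1$ collapses to $X_G*X^{{}^*\bfi'}$ and $\sigma_2$ to $X_F*X^{\bfp'^*}$, and the right-hand side of \eqref{eq:proj_mult} becomes precisely $q^{-\half\Lambda({}^*\bfw,{}^*\bfi)}\sigma_1+q^{-\half\Lambda({}^*\bfw,{}^*\bfi)-\half[I,I]^0}\sigma_2$.

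To compare, I expand each $\sigma_i$ by substituting Lemma~\ref{lem:hom-hall2} for the restricted $\Hom$-cardinalities and then inserting the QCC formula for $X_{G''}$ and $X_{F''}$. Using the identification ${}^*\bfi=\bfp^*$ (which follows from Lemma~\ref{lem:identities}.\eqref{id5},\eqref{id6} together with $\soc(\nu P)\cong P/\rad P$) together with repeated application of Lemma~\ref{lem:identities}.4, I recast every monomial in $\sigma_1$ and $\sigma_2$ into the form $X^{-\bfb^*-{}^*(\bfa-\bfi)}$ matching the residual sum from Step 1. The first term $\sigma_1$ will absorb contributions from subrepresentations $B\subset W$ that factor through the inclusion $G\hookrightarrow W$, while $\sigma_2$ absorbs those whose quotient $W/B$ factors through the surjection $W\to F$; the vanishings $\Hom(G,I')=\Hom(P',F)=0$ are what prevent extraneous Hall numbers from appearing in the kernel/cokernel decompositions of Lemma~\ref{lem:hom-hall2}.

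The main obstacle is the bookkeeping of $q$-power exponents, exactly as in Proposition~\ref{prop:sigma}. I will need identities along the lines of
\[\langle\bfk,\bfw\rangle+\langle\bfv,\bfk\rangle=\langle\bfk,\bfd\rangle+\langle\bfa,\bfk\rangle\]
from the previous proof, adapted to the present configuration, to massage the Ringel-Euler terms using the Auslander-Reiten formula (Proposition~\ref{prop:AR}) and the hypothesis-driven vanishings. Once the exponents are aligned, the associativity of Hall numbers \eqref{eq:hall-assoc} reduces the equality of coefficients to a cancellation of terms involving $F^W_{AB}$, $F^I_{I''A}$, $F^W_{F''B}$, and $F^P_{BP''}$, completing the proof.
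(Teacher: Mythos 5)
Your proposal follows essentially the same route as the paper's proof: the same expansion of $X_WX^{{}^*\bfi}$ extracting the factor $q^{-\half\Lambda({}^*\bfw,{}^*\bfi)}$ and leaving the residual weight $q^{-\half\langle\bfb,\bfi\rangle}$, the same $\sigma_1,\sigma_2$ with denominator $q^{[I,I]^0}-1=q^{[P,P]^0}-1$ collapsing by the uniqueness hypotheses, and the same plan of expanding via Lemma~\ref{lem:hom-hall2}, using the vanishings $\Hom(G,I')=\Hom(P',F)=0$ to simplify the Ringel--Euler exponents, and then applying Hall-number associativity. The bookkeeping you defer is precisely the paper's Proposition~\ref{prop:sigma2}, whose punchline is that injectivity of $I$ together with $\dim_{\End(I)}\Hom(W,I)=1$ forces $\langle\bfy,\bfi\rangle\in\{0,[I,I]^0\}$ for every subobject $Y\subset W$, which is the precise form of your claim that $\sigma_1$ absorbs the subobjects contained in $G$ and $\sigma_2$ the complementary ones.
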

 \begin{proof}
  We start by computing the product on the left using Lemma~\ref{lem:identities}.1:  
  \begin{align*}
   X_WX^{{}^*\bfi}
   &= \sum\limits_{X,Y} q^{-\half\langle\bfy,\bfx\rangle} F_{XY}^W q^{\half\Lambda(-\bfy^*-{}^*\bfx,{}^*\bfi)} X^{-\bfy^*-{}^*\bfx+{}^*\bfi}\\
   &= q^{-\half\Lambda({}^*\bfw,{}^*\bfi)}\sum\limits_{X,Y} q^{\half\Lambda({}^*\bfy-\bfy^*,{}^*\bfi)} q^{-\half\langle\bfy,\bfx\rangle} F_{XY}^W X^{-\bfy^*-{}^*\bfx+{}^*\bfi}\\
   &= q^{-\half\Lambda({}^*\bfw,{}^*\bfi)}\sum\limits_{X,Y} q^{-\half\langle \bfy, \bfi\rangle} q^{-\half\langle\bfy,\bfx\rangle} F_{XY}^W X^{-\bfy^*-{}^*\bfx+{}^*\bfi}.
  \end{align*} 
  Our goal is to show that this is equal to the right hand side of equation~\eqref{eq:proj_mult}.  We again accomplish this by cleverly rewriting each term on the right.  We make the following definitions 
  \begin{align*}
   \sigma_1
   &:=\sum\limits_{\substack{G,I'\\G\ne W}} \frac{|\Hom(W,I)_{GI'}|}{q^{[I,I]^0}-1}X_G*X^{{}^*\bfi'},\\
   \sigma_2
   &:=\sum\limits_{\substack{F,P'\\F\ne W}} \frac{|\Hom(P,W)_{P'F}|}{q^{[I,I]^0}-1}X_F*X^{{\bfp'}^*}.
  \end{align*}
  Since there are unique nonzero morphisms $W\to I$ and $P\to W$ each of these sums collapses to a single term.  Note that under the assumption $\dim_{\End(I)}\Hom(W,I)=\dim_{\End(P)}\Hom(P,W)=1$, we have $|\Hom(W,I)_{GI'}|=|\Hom(P,W)_{P'F}|=q^{[I,I]^0}-1$ where we have used that $\End(P)\cong\End(I)$.  Thus we see that the right hand side of equation~\eqref{eq:proj_mult} may be written as $q^{-\half\Lambda({}^*\bfw,{}^*\bfi)}\sigma_1+q^{-\half\Lambda({}^*\bfw,{}^*\bfi)-\half[I,I]^0}\sigma_2$.  Since $I$ is injective, applying $\Hom(-,I)$ to a short exact sequence 
  \begin{center}
   \begin{tikzpicture}
    \matrix (m) [matrix of math nodes, row sep=1em, column sep=2.5em, text height=1.5ex, text depth=0.25ex]
     {0 & Y & W & X & 0\\};
    \path[->,font=\scriptsize]
     (m-1-1) edge (m-1-2)
     (m-1-2) edge (m-1-3)
     (m-1-3) edge (m-1-4)
     (m-1-4) edge (m-1-5);
   \end{tikzpicture}
  \end{center}
  induces a short exact sequence
  \begin{center}
   \begin{tikzpicture}
    \matrix (m) [matrix of math nodes, row sep=1em, column sep=2.5em, text height=1.5ex, text depth=0.25ex]
     {0 & \Hom(Y,I) & \Hom(W,I) & \Hom(X,I) & 0\,.\\};
    \path[->,font=\scriptsize]
     (m-1-1) edge (m-1-2)
     (m-1-2) edge (m-1-3)
     (m-1-3) edge (m-1-4)
     (m-1-4) edge (m-1-5);
   \end{tikzpicture}
  \end{center}
  Since $\dim_{\End(I)}\Hom(W,I)=1$ we see that either $\Hom(Y,I)\cong\Hom(W,I)\cong\Hom(I,I)$ or $\Hom(Y,I)=0$.  Then $\langle \bfy,\bfi\rangle=[Y,I]_0$ either equals $[I,I]_0$ or $0$.  Now observe that our computation of $X_WX^{{}^*\bfi}$ above and the following Proposition complete the proof.
  \begin{proposition}\label{prop:sigma2}
   We may rewrite $\sigma_1$ and $\sigma_2$ as
   \begin{align*}
    \sigma_1&=\sum\limits_{X,Y:[Y,I]^0=0} q^{-\half\langle\bfy,\bfx\rangle} F_{XY}^W X^{-\bfy^*-{}^*\bfx+{}^*\bfi},\\
    \sigma_2&=\sum\limits_{X,Y:[Y,I]^0=[I,I]^0} q^{-\half\langle\bfy,\bfx\rangle} F_{XY}^W X^{-\bfy^*-{}^*\bfx+{}^*\bfi}.
   \end{align*}
  \end{proposition}
  \begin{proof}
   We begin with $\sigma_1$.  Using Lemma~\ref{lem:hom-hall2} we may rewrite $\sigma_1$ as 
   \begin{align*}
    \sigma_1
    &=\sum\limits_{\substack{A,G,I',X,Y\\G\ne W}} \frac{a_A F_{AG}^W F_{I'A}^I}{q^{[I,I]^0}-1} q^{-\half\langle\bfy,\bfx\rangle} F_{XY}^G X^{-\bfy^*-{}^*\bfx+{}^*\bfi'}\\
    &=\sum\limits_{\substack{A,G,I',X,Y\\G\ne W}} \frac{a_A F_{AG}^W F_{I'A}^I}{q^{[I,I]^0}-1} q^{-\half\langle\bfy,\bfx\rangle} F_{XY}^G X^{-\bfy^*-{}^*(\bfx+\bfa)+{}^*\bfi}.
   \end{align*}
   Note that by assumption we have $\Hom(G,I')=0$ and thus $\Hom(Y,I')=0$.  Since $G\ne W$, we have $A\ne0$ and $\Hom(A,I)\ne0$. Thus the induced exact diagram
   \begin{center}
    \begin{tikzpicture}
     \matrix (m) [matrix of math nodes, row sep=2.5em, column sep=2.5em, text height=1.5ex, text depth=0.25ex]
      {0 & \Hom(A,I) & \Hom(W,I) & \Hom(G,I) & 0\\ & & & \Hom(Y,I) & \\ & & & 0 & \\};
     \path[->,font=\scriptsize]
      (m-1-1) edge (m-1-2)
      (m-1-2) edge (m-1-3)
      (m-1-3) edge (m-1-4)
      (m-1-4) edge (m-1-5) edge (m-2-4)
      (m-2-4) edge (m-3-4);
    \end{tikzpicture}
   \end{center}
   implies $\Hom(Y,I)=\Hom(G,I)=0$.  So we get the identity
   \begin{align*}
    \langle\bfy,\bfx\rangle
    &= \langle\bfy,\bfw-\bfa-\bfy\rangle\\
    &= \langle\bfy,\bfw-\bfy\rangle-\langle\bfy,\bfi-\bfi'\rangle\\
    &= \langle\bfy,\bfw-\bfy\rangle,
   \end{align*}
   and $\sigma_1$ becomes
   \begin{align*}
    \sigma_1
    &=\sum\limits_{\substack{A,G,I',X,Y\\G\ne W}} \frac{a_A F_{AG}^W F_{I'A}^I}{q^{[I,I]^0}-1} q^{-\half\langle\bfy,\bfw-\bfy\rangle} F_{XY}^G X^{-\bfy^*-{}^*(\bfx+\bfa)+{}^*\bfi}.
   \end{align*}
   In the case $G=W$ we have $A=0$ and $I=I'$. Therefore we may rewrite $\sigma_1$ and then apply the associativity of Hall numbers as follows
   \begin{align*}
    \sigma_1
    &=\sum\limits_{A,G,I',X,Y} \frac{a_A F_{AG}^W F_{I'A}^I}{q^{[I,I]^0}-1} q^{-\half\langle\bfy,\bfw-\bfy\rangle} F_{XY}^G X^{-\bfy^*-{}^*(\bfx+\bfa)+{}^*\bfi}\\
    &\quad\quad-\sum\limits_{X,Y} \frac{1}{q^{[I,I]^0}-1} q^{-\half\langle\bfy,\bfw-\bfy\rangle} F_{XY}^W X^{-\bfy^*-{}^*\bfx+{}^*\bfi}\\
    &=\sum\limits_{A,G',I',X,Y} \frac{a_A F_{AX}^{G'} F_{I'A}^I}{q^{[I,I]^0}-1} q^{-\half\langle\bfy,\bfg'\rangle} F_{G'Y}^W X^{-\bfy^*-{}^*\bfg'+{}^*\bfi}\\
    &\quad\quad-\sum\limits_{X,Y} \frac{1}{q^{[I,I]^0}-1} q^{-\half\langle\bfy,\bfx\rangle} F_{XY}^W X^{-\bfy^*-{}^*\bfx+{}^*\bfi}.
   \end{align*}
   Notice that by Lemma~\ref{lem:hom-hall2} we have
   \[\sum\limits_{I',X}\sum\limits_A a_A F_{AX}^{G'} F_{I'A}^I=\sum\limits_{I',X}\Hom(G',I)_{XI'}=q^{[G',I]^0},\]
   so that $\sigma_1$ becomes
   \begin{align*}
    \sigma_1
    &=\sum\limits_{G',Y} \frac{q^{[G',I]^0}}{q^{[I,I]^0}-1} q^{-\half\langle\bfy,\bfg'\rangle} F_{G'Y}^W X^{-\bfy^*-{}^*\bfg'+{}^*\bfi}\\
    &\quad\quad-\sum\limits_{X,Y} \frac{1}{q^{[I,I]^0}-1} q^{-\half\langle\bfy,\bfx\rangle} F_{XY}^W X^{-\bfy^*-{}^*\bfx+{}^*\bfi}\\
    &=\sum\limits_{X,Y} \frac{q^{[X,I]^0}-1}{q^{[I,I]^0}-1} q^{-\half\langle\bfy,\bfx\rangle} F_{XY}^W X^{-\bfy^*-{}^*\bfx+{}^*\bfi}\\
    &=\sum\limits_{X,Y:[Y,I]^0=0} q^{-\half\langle\bfy,\bfx\rangle} F_{XY}^W X^{-\bfy^*-{}^*\bfx+{}^*\bfi}.\\
   \end{align*}

   Turning to $\sigma_2$, recall that we have $\bfp^*=[P/\rad P]=[\soc I]={}^*\bfi$.  Combining this observation with Lemma~\ref{lem:hom-hall2} we may rewrite $\sigma_2$ as 
   \begin{align*}
    \sigma_2
    &=\sum\limits_{\substack{B,F,P',X,Y\\F\ne W}} \frac{a_B F_{FB}^W F_{BP'}^P}{q^{[I,I]^0}-1} q^{-\half\langle\bfy,\bfx\rangle} F_{XY}^F X^{-\bfy^*-{}^*\bfx+{\bfp'}^*}\\
    &=\sum\limits_{\substack{B,F,P',X,Y\\F\ne W}} \frac{a_B F_{FB}^W F_{BP'}^P}{q^{[I,I]^0}-1} q^{-\half\langle\bfy,\bfx\rangle} F_{XY}^F X^{-(\bfy+\bfb)^*-{}^*\bfx+{}^*\bfi}.
   \end{align*}
   Note that by assumption we have $\Hom(P',F)=0$ and thus $\Hom(P',X)=0$.  Since $F\ne W$, we have $B\ne0$ and $\Hom(P,B)\ne0$.  Thus the induced exact diagram
   \begin{center}
    \begin{tikzpicture}
     \matrix (m) [matrix of math nodes, row sep=2.5em, column sep=2.5em, text height=1.5ex, text depth=0.25ex]
      {0 & \Hom(P,B) & \Hom(P,W) & \Hom(P,F) & 0\\ & & & \Hom(P,X) & \\ & & & 0 & \\};
     \path[->,font=\scriptsize]
      (m-1-1) edge (m-1-2)
      (m-1-2) edge (m-1-3)
      (m-1-3) edge (m-1-4)
      (m-1-4) edge (m-1-5) edge (m-2-4)
      (m-2-4) edge (m-3-4);
    \end{tikzpicture}
   \end{center}
   implies $\Hom(P,X)=\Hom(P,F)=0$.  So we get the identity
   \begin{align*}
    \langle\bfy,\bfx\rangle
    &= \langle\bfw-\bfb-\bfx,\bfx\rangle\\
    &= \langle\bfw-\bfx,\bfx\rangle-\langle\bfp-\bfp',\bfx\rangle\\
    &= \langle\bfw-\bfx,\bfx\rangle,
   \end{align*} 
   and $\sigma_2$ becomes
   \begin{align*}
    \sigma_2
    &=\sum\limits_{\substack{B,F,P',X,Y\\F\ne W}} \frac{a_B F_{FB}^W F_{BP'}^P}{q^{[I,I]^0}-1} q^{-\half\langle\bfw-\bfx,\bfx\rangle} F_{XY}^F X^{-(\bfy+\bfb)^*-{}^*\bfx+{}^*\bfi}.
   \end{align*}
   In the case $F=W$ we have $B=0$ and $P=P'$. Therefore we may rewrite $\sigma_2$ and then apply the associativity of Hall numbers as follows:
   \begin{align*}
    \sigma_2
    &=\sum\limits_{B,F,P',X,Y} \frac{a_B F_{FB}^W F_{BP'}^P}{q^{[I,I]^0}-1} q^{-\half\langle\bfw-\bfx,\bfx\rangle} F_{XY}^F X^{-(\bfy+\bfb)^*-{}^*\bfx+{}^*\bfi}\\
    &\quad\quad-\sum\limits_{X,Y} \frac{1}{q^{[I,I]^0}-1} q^{-\half\langle\bfw-\bfx,\bfx\rangle} F_{XY}^W X^{-\bfy^*-{}^*\bfx+{}^*\bfi}\\
    &=\sum\limits_{B,F',P',X,Y} \frac{a_B F_{YB}^{F'} F_{BP'}^P}{q^{[I,I]^0}-1} q^{-\half\langle\bff',\bfx\rangle} F_{XF'}^W X^{-{\bff'}^*-{}^*\bfx+{}^*\bfi}\\
    &\quad\quad-\sum\limits_{X,Y} \frac{1}{q^{[I,I]^0}-1} q^{-\half\langle\bfy,\bfx\rangle} F_{XY}^W X^{-\bfy^*-{}^*\bfx+{}^*\bfi}.
   \end{align*}
   Notice that by Lemma~\ref{lem:hom-hall2} we have
   \[\sum\limits_{P',Y}\sum\limits_B a_B F_{YB}^{F'}F_{BP'}^P=\sum\limits_{P',Y}\Hom(P,F')_{P'Y}=q^{[P,F']^0},\]
   so that $\sigma_2$ becomes
   \begin{align*}
    \sigma_2
    &=\sum\limits_{F',X} \frac{q^{[P,F']^0}}{q^{[I,I]^0}-1} q^{-\half\langle\bff',\bfx\rangle} F_{XF'}^W X^{-{\bff'}^*-{}^*\bfx+{}^*\bfi}\\
    &\quad\quad-\sum\limits_{X,Y} \frac{1}{q^{[I,I]^0}-1} q^{-\half\langle\bfy,\bfx\rangle} F_{XY}^W X^{-\bfy^*-{}^*\bfx+{}^*\bfi}\\
    &=\sum\limits_{X,Y} \frac{q^{[P,Y]^0}-1}{q^{[I,I]^0}-1} q^{-\half\langle\bfy,\bfx\rangle} F_{XY}^W X^{-\bfy^*-{}^*\bfx+{}^*\bfi}\\
    &=\sum\limits_{X,Y:[P,Y]^0=[I,I]^0} q^{-\half\langle\bfy,\bfx\rangle} F_{XY}^W X^{-\bfy^*-{}^*\bfx+{}^*\bfi}\\
    &=\sum\limits_{X,Y:[Y,I]^0=[I,I]^0} q^{-\half\langle\bfy,\bfx\rangle} F_{XY}^W X^{-\bfy^*-{}^*\bfx+{}^*\bfi}.
   \end{align*}
  \end{proof}
  \noindent This completes the proof of Proposition~\ref{prop:sigma2} and thus the proof of Theorem~\ref{th:proj_mult}.
 \end{proof}

 \subsection{Commutation and Compatibility}\label{sec:comm}
 Our eventual goal is to conclude that the quantum cluster character applied to exceptional representations of $(Q,\bfd)$ coincides with the initial cluster Laurent expansion of all non-initial cluster variables.  Recall that the cluster variables fit into quasi-commuting families and thus in this section we will consider what conditions we need on valued representations $V$ and $W$ so that the quantum cluster characters $X_V$ and $X_W$ quasi-commute.  The following Proposition, inspired by \cite[Prop. 3.6]{caldchap} and \cite[Equation (19)]{qin}, will be the main ingredient.

 \begin{proposition}\label{prop:grass_count}
  Let $V$ and $W$ be representations of $(Q,\bfd)$ with $\Ext^1(V,W)=0$.  Then
  \[|Gr_\bfe^{V\oplus W}|=\sum\limits_{\substack{\bfb,\bfc\in\cK(Q)\\\bfb+\bfc=\bfe}} q^{\langle\bfb,\bfw-\bfc\rangle}|Gr_\bfb^V||Gr_\bfc^W|.\]
 \end{proposition}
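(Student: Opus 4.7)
The plan is to set up an explicit bijection between subrepresentations $U \subseteq V \oplus W$ of dimension $\bfe$ and triples $(V', W'', \bar\phi)$, where $V' \subseteq V$, $W'' \subseteq W$, and $\bar\phi : V' \to W/W''$ is a morphism. Specifically, given $U$, I would set $W'' := U \cap W$ (embedding $W$ as the second summand) and $V' := \pi_V(U)$ (the image of $U$ under projection to $V$). Then the restriction of $\pi_V$ gives a short exact sequence
\[
 0 \to W'' \to U \to V' \to 0,
\]
so that $[U] = [V'] + [W'']$, giving the decomposition $\bfe = \bfb + \bfc$ with $\bfb = [V']$ and $\bfc = [W'']$. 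Conversely, a triple $(V', W'', \bar\phi)$ recovers $U$ as $U_{\bar\phi} = \{(v,w) \in V \oplus W : v \in V',\ w + W'' = \bar\phi(v)\}$, and one checks that these two constructions are mutually inverse. This shows that for fixed $V'$ and $W''$, the number of $U \subseteq V \oplus W$ with $\pi_V(U) = V'$ and $U \cap W = W''$ equals $|\Hom(V', W/W'')|$.

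Next I would compute $|\Hom(V', W/W'')|$ as a power of $q$ using the hypothesis $\Ext^1(V,W) = 0$. Applying $\Hom(V,-)$ to the short exact sequence $0 \to W'' \to W \to W/W'' \to 0$ and invoking the hereditary property (so $\Ext^2 = 0$) yields $\Ext^1(V, W/W'') = 0$. Then applying $\Hom(-, W/W'')$ to $0 \to V' \to V \to V/V' \to 0$ and again using that the category is hereditary gives a surjection $\Ext^1(V, W/W'') \twoheadrightarrow \Ext^1(V', W/W'')$, so $\Ext^1(V', W/W'') = 0$. Consequently $\langle V', W/W'' \rangle = [V', W/W'']^0$ and
\[
 |\Hom(V', W/W'')| = q^{[V', W/W'']^0} = q^{\langle \bfb, \bfw - \bfc\rangle}.
\]

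Finally, summing the bijection count over all $(\bfb, \bfc, V', W'')$ gives
\[
 |Gr_\bfe^{V \oplus W}| = \sum_{\bfb+\bfc=\bfe} \sum_{V' \in Gr_\bfb^V} \sum_{W'' \in Gr_\bfc^W} q^{\langle \bfb, \bfw - \bfc\rangle} = \sum_{\bfb+\bfc=\bfe} q^{\langle \bfb, \bfw - \bfc\rangle} |Gr_\bfb^V|\,|Gr_\bfc^W|,
\]
as desired. The only mildly subtle point, and the one I would be most careful about, is the choice of parameterization: one must intersect with $W$ and project to $V$ (not the reverse), because only in this direction does the hypothesis $\Ext^1(V,W) = 0$ propagate via the hereditary long exact sequences to give the needed vanishing $\Ext^1(V', W/W'') = 0$. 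Otherwise the proof is a routine counting argument with no real computational content.
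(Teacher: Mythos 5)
Your proposal is correct and follows essentially the same route as the paper: parameterize the fiber of $L\mapsto(\pi_V(L),L\cap W)$ by $\Hom(\pi_V(L),W/(L\cap W))$, then use $\Ext^1(V,W)=0$ together with heredity to show this $\Ext^1$-vanishing passes to $\Ext^1(V',W/W'')=0$, so every fiber is an affine space of size $q^{\langle\bfb,\bfw-\bfc\rangle}$. The only cosmetic difference is the order in which you run the two long exact sequences, which is immaterial.
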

 \begin{proof}
  Denote by $\pi_1:V\oplus W\to V$ the natural projections and consider $W$ as a subrepresentations of $V\oplus W$ via the natural inclusion.  Fix $B\in Gr_\bfb^V$ and $C\in Gr_\bfc^W$ and define
  \[Gr_{B,C}^{V\oplus W}=\{L\in Gr_{\bfb+\bfc}^{V\oplus W}: \pi_1(L) = B, L\cap W = C\}.\]
  \begin{lemma}\label{lem:aff_fiber}
   The following map is an isomorphism:
   \begin{center}
    \begin{tikzpicture}
     \matrix (m) [matrix of math nodes, row sep=1em, column sep=2.5em, text height=1.5ex, text depth=0.25ex]
      {\zeta\ :\ \Hom(B,W/C) & Gr_{B,C}^{V\oplus W}\\ f & L^f:=\{b+w\in V\oplus W: b\in B, w\in W, f(b)=p(w)\}\\};
     \path[->]
      (m-1-1) edge (m-1-2);
     \path[|->]
      (m-2-1) edge (m-2-2);
    \end{tikzpicture}
   \end{center}
   where $p:W\to W/C$ is the natural projection.
  \end{lemma}
  \begin{proof}
   Since $p$ is surjective we can find for any $b\in B$ an element $w\in W$ so that $f(b)=p(w)$ and thus $\pi_1(L^f)=B$.  Also notice that $L^f\cap W=\text{ker }p=C$.  We define a map 
   \begin{center}
    \begin{tikzpicture}
     \matrix (m) [matrix of math nodes, row sep=1em, column sep=2.5em, text height=1.5ex, text depth=0.25ex]
      {\eta\ :\ Gr_{B,C}^{V\oplus W} & \Hom(B,W/C)\\ L & f^L\\};
     \path[->]
      (m-1-1) edge (m-1-2);
     \path[|->]
      (m-2-1) edge (m-2-2);
    \end{tikzpicture}
   \end{center}
   where $f^L(b):=p(w)$ for any $w\in W$ such that $b+w\in L$.  If $b+w\in L$ and $b+w'\in L$ then $w-w'\in L\cap W=C$ so that $p(w)=p(w')$ and $\eta$ is well defined.  It is easy to see that $\eta\circ\zeta$ and $\zeta\circ\eta$ are identity maps.
  \end{proof}
  \noindent Define a map 
   \begin{center}
    \begin{tikzpicture}
     \matrix (m) [matrix of math nodes, row sep=1em, column sep=2.5em, text height=1.5ex, text depth=0.25ex]
      {\varphi\ :\ Gr_\bfe^{V\oplus W} & \displaystyle\coprod\limits_{\bfb+\bfc=\bfe} Gr_\bfb^V\times Gr_\bfc^W\\ L & (\pi_1(L), L\cap W)\,.\\};
     \path[->]
      (m-1-1) edge (m-1-2);
     \path[|->]
      (m-2-1) edge (m-2-2);
    \end{tikzpicture}
   \end{center}
  The fiber over a point $(B,C)$ is $Gr_{B,C}^{V\oplus W}$, which by Lemma~\ref{lem:aff_fiber} is isomorphic to an affine space with $q^{\dim\Hom(B,W/C)}$ elements.  To complete the proof it suffices to show for $B\in Gr_\bfb^V$ and $C\in Gr_\bfc^W$ that $\dim\Hom(B,W/C)$ only depends on the dimension vectors of $B$ and $C$ and thus all fibers have the same number of points.  To this end we will show that $\Ext(B,W/C)=0$ so that $\dim\Hom(B,W/C)=\langle \bfb,\bfw-\bfc\rangle$.  Consider the following exact sequences:
  \begin{center}
   \begin{tikzpicture}
    \matrix (m) [matrix of math nodes, row sep=1em, column sep=2.5em, text height=1.5ex, text depth=0.25ex]
     {0 & B & V & V/B & 0\,,\\ 0 & C & W & W/C & 0\,.\\};
    \path[->,font=\scriptsize]
     (m-1-1) edge (m-1-2)
     (m-1-2) edge (m-1-3)
     (m-1-3) edge (m-1-4)
     (m-1-4) edge (m-1-5)
     (m-2-1) edge (m-2-2)
     (m-2-2) edge (m-2-3)
     (m-2-3) edge (m-2-4)
     (m-2-4) edge (m-2-5);
   \end{tikzpicture}
  \end{center}
  We apply $\Hom(-,W)$ to the first sequence and $\Hom(B,-)$ to the second sequence to get the following exact diagram taken from the corresponding long exact sequences:
  \begin{center}
   \begin{tikzpicture}
    \matrix (m) [matrix of math nodes, row sep=3em, column sep=2.5em, text height=1.5ex, text depth=0.25ex]
     { \Ext^1(V,W) & \Ext^1(B,W) & 0 \\ & \Ext^1(B,W/C) & \\ & 0 & \\};
    \path[->,font=\scriptsize]
     (m-1-1) edge (m-1-2)
     (m-1-2) edge (m-1-3) edge (m-2-2)
     (m-2-2) edge (m-3-2);
   \end{tikzpicture}
  \end{center}
  Since $\Ext^1(V,W)=0$ we get $\Ext^1(B,W/C)=0$.
 \end{proof}

 \begin{theorem}\label{th:qcc-comm}
  Let $V$ and $W$ be representations of $(Q,\bfd)$ with $\Ext^1(V,W)=0$, then 
  \[X_VX_W=q^{\half\Lambda({}^*\bfv,{}^*\bfw)}X_{V\oplus W}.\]
  If in addition $\Ext^1(W,V)=0$, we have
  \[X_VX_W = q^{\Lambda({}^*\bfv,{}^*\bfw)}X_WX_V.\]
 \end{theorem}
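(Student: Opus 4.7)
The plan is to prove the first identity by direct computation. I begin by expanding both sides using the definition of the quantum cluster character. On the left hand side, multiplying the defining expressions
\[ X_V = \sum\limits_{\bfb} q^{-\half\langle\bfb,\bfv-\bfb\rangle}|Gr_\bfb^V|X^{-\bfb^*-{}^*(\bfv-\bfb)}, \qquad X_W = \sum\limits_{\bfc} q^{-\half\langle\bfc,\bfw-\bfc\rangle}|Gr_\bfc^W|X^{-\bfc^*-{}^*(\bfw-\bfc)} \]
and using $X^\bfa X^\bfb=q^{\half\Lambda(\bfa,\bfb)}X^{\bfa+\bfb}$, I collect the product as a double sum over $\bfb,\bfc$ with total exponent on $q$ given by $-\half\langle\bfb,\bfv-\bfb\rangle-\half\langle\bfc,\bfw-\bfc\rangle+\half\Lambda(-\bfb^*-{}^*(\bfv-\bfb),-\bfc^*-{}^*(\bfw-\bfc))$ and monomial $X^{-(\bfb+\bfc)^*-{}^*(\bfv+\bfw-\bfb-\bfc)}$.

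Next I apply Lemma~\ref{lem:identities}.\eqref{id4} to replace the $\Lambda$-term by $\Lambda({}^*\bfv,{}^*\bfw)-\langle\bfc,\bfv-\bfb\rangle+\langle\bfb,\bfw-\bfc\rangle$. On the right hand side, I expand $X_{V\oplus W}$ as a sum indexed by $\bfe\in\cK(Q)$ and use Proposition~\ref{prop:grass_count} (which applies precisely because $\Ext^1(V,W)=0$) to rewrite $|Gr_\bfe^{V\oplus W}|=\sum_{\bfb+\bfc=\bfe}q^{\langle\bfb,\bfw-\bfc\rangle}|Gr_\bfb^V||Gr_\bfc^W|$. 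After this substitution, both sides become double sums over $\bfb,\bfc$ of the same monomial $X^{-(\bfb+\bfc)^*-{}^*(\bfv+\bfw-\bfb-\bfc)}$ weighted by $|Gr_\bfb^V||Gr_\bfc^W|$, and it remains to check that the $q$-exponents agree. This reduces to the elementary bilinear identity
\[ -\half\langle\bfb,\bfv-\bfb\rangle-\half\langle\bfc,\bfw-\bfc\rangle-\half\langle\bfc,\bfv-\bfb\rangle+\half\langle\bfb,\bfw-\bfc\rangle = -\half\langle\bfb+\bfc,\bfv+\bfw-\bfb-\bfc\rangle+\langle\bfb,\bfw-\bfc\rangle, \]
which is verified by expanding the Ringel-Euler form bilinearly on both sides. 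This establishes the first identity.

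For the second identity, I simply apply the first identity in both orders. Since $V\oplus W\cong W\oplus V$ as representations, we have $X_{V\oplus W}=X_{W\oplus V}$, and combining $X_VX_W=q^{\half\Lambda({}^*\bfv,{}^*\bfw)}X_{V\oplus W}$ with the analogous $X_WX_V=q^{\half\Lambda({}^*\bfw,{}^*\bfv)}X_{V\oplus W}=q^{-\half\Lambda({}^*\bfv,{}^*\bfw)}X_{V\oplus W}$ and dividing yields $X_VX_W=q^{\Lambda({}^*\bfv,{}^*\bfw)}X_WX_V$.

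I do not expect any real obstacle here: Proposition~\ref{prop:grass_count} already packages the nontrivial geometric content of counting subrepresentations of $V\oplus W$, and Lemma~\ref{lem:identities}.\eqref{id4} handles the compatibility of the skew-form $\Lambda$ with the duals ${}^*(\cdot)$ and $(\cdot)^*$. The only step requiring attention is the bookkeeping in matching the two $q$-exponents, which is a routine bilinear calculation.
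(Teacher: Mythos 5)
Your proof is correct and follows essentially the same route as the paper: expand the product of the two characters, use Lemma~\ref{lem:identities}\eqref{id4} to extract the factor $q^{\half\Lambda({}^*\bfv,{}^*\bfw)}$, and invoke Proposition~\ref{prop:grass_count} (valid since $\Ext^1(V,W)=0$) to reassemble the double sum into $X_{V\oplus W}$, with the remaining $q$-exponent bookkeeping being the same bilinear identity. Your deduction of the quasi-commutation from applying the first identity in both orders (using $\Ext^1(W,V)=0$ and the skew-symmetry of $\Lambda$) is exactly the intended argument.
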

 \begin{proof}
  Using Lemma~\ref{lem:identities} and Proposition~\ref{prop:grass_count}, we have
  \begin{align*}
   X_VX_W&=\sum\limits_{\bfb,\bfc\in \cK(Q)} q^{-\half\langle\bfb,\bfv-\bfb\rangle} |Gr_\bfb^V| X^{-\bfb^*-{}^*(\bfc-\bfb)}\\
   & \quad \quad \cdot q^{-\half\langle\bfc,\bfw-\bfc\rangle} |Gr_\bfc^W| X^{-\bfc^*-{}^*(\bfw-\bfc)}\\
   &= \sum\limits_{\bfe\in \cK(Q)}\sum\limits_{\bfb+\bfc=\bfe} q^{\langle\bfb,\bfw-\bfc\rangle} |Gr_\bfb^V||Gr_\bfc^W| X^{-\bfe^*-{}^*(\bfv+\bfw-\bfe)}\\
   & \quad \quad \cdot q^{-\langle \bfb,\bfw-\bfc\rangle} q^{-\half\langle\bfb,\bfv-\bfb\rangle} q^{-\half\langle\bfc,\bfw-\bfc\rangle} q^{\half\Lambda(-\bfb^*-{}^*(\bfv-\bfb),-\bfc^*-{}^*(\bfw-\bfc))} \\
   &= q^{\half\Lambda({}^*\bfv,{}^*\bfw)}\sum\limits_{\bfe\in\cK(Q)}q^{-\half\langle \bfe,\bfv+\bfw-\bfe\rangle} |Gr_\bfe^{V\oplus W}| X^{-\bfe^*-{}^*(\bfv+\bfw-\bfe)}\\
   &= q^{\half\Lambda({}^*\bfv,{}^*\bfw)}X_{V\oplus W}.
  \end{align*}
 \end{proof}

 \begin{theorem}\label{th:init-comm}
  Let $V$ and $I$ be representations of $(Q,\bfd)$ such that $I$ is injective and $\supp(\soc I)\cap\supp(V)=\emptyset$, then
  \[X_VX^{{}^*\bfi} = q^{-\half\Lambda({}^*\bfv,{}^*\bfi)} X_V*X^{{}^*\bfi}\]
  and
  \[X_V X^{{}^*\bfi} = q^{-\Lambda({}^*\bfv,{}^*\bfi)} X^{{}^*\bfi} X_V.\]
 \end{theorem}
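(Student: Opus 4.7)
The plan is to reduce both identities to a single vanishing statement about $\Lambda$ on the Grothendieck group.  Expanding the left side of the first identity via \eqref{eq:qcc} and the quantum torus relation $X^\bfa X^\bfb = q^{\half\Lambda(\bfa,\bfb)}X^{\bfa+\bfb}$, each summand of $X_V X^{{}^*\bfi}$ picks up a twist factor $q^{\half\Lambda(-\bfe^*-{}^*(\bfv-\bfe),\,{}^*\bfi)}$, whereas $X_V * X^{{}^*\bfi}$ is the same sum with every such twist stripped off.  Hence the first identity amounts to the $\bfe$-uniform equality
\[\Lambda\bigl(-\bfe^*-{}^*(\bfv-\bfe),\,{}^*\bfi\bigr) \;=\; -\Lambda({}^*\bfv,{}^*\bfi) \qquad \text{whenever } Gr_\bfe^V\ne\emptyset.\]
Using additivity ${}^*\bfv={}^*\bfe+{}^*(\bfv-\bfe)$, this collapses to showing $\Lambda(\bfe^*-{}^*\bfe,\,{}^*\bfi)=0$ for every such $\bfe$.

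For the key computation, \eqref{B_matrices} together with $B_Q^+-B_Q^-=B_Q$ gives $\bfe^*-{}^*\bfe=B_Q\bfe=\sum_i e_i\bfb^i$.  Applying Lemma~\ref{lem:identities}.\eqref{id2} termwise converts the $\Lambda$-pairing into the Ringel--Euler form:
\[\Lambda(\bfe^*-{}^*\bfe,\,{}^*\bfi) \;=\; \sum_i e_i\,\Lambda(\bfb^i,{}^*\bfi) \;=\; \sum_i e_i\,\langle\alpha_i,\bfi\rangle.\]
If $Gr_\bfe^V\ne\emptyset$ then $\bfe\le\bfv$ componentwise, so $e_i\ne 0$ forces $i\in\supp(V)$, and the hypothesis $\supp(\soc I)\cap\supp(V)=\emptyset$ then gives $i\notin\supp(\soc I)$.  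For such $i$ we have $\Hom(S_i,I)=0$ (no copy of $S_i$ embeds in $\soc I$), and $\Ext^1(S_i,I)=0$ because $I$ is injective, so $\langle\alpha_i,\bfi\rangle=0$.  The sum vanishes, establishing the first identity.

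The second identity is then a formal consequence.  Repeating the expansion for $X^{{}^*\bfi}X_V$ and applying skew-symmetry of $\Lambda$ to the same vanishing yields $X^{{}^*\bfi}X_V = q^{\half\Lambda({}^*\bfv,{}^*\bfi)}\,X^{{}^*\bfi}*X_V$.  Since $X^\bfa * X^\bfb = X^{\bfa+\bfb}$ is manifestly commutative, $X_V*X^{{}^*\bfi} = X^{{}^*\bfi}*X_V$, and combining with the first identity produces $X_V X^{{}^*\bfi} = q^{-\Lambda({}^*\bfv,{}^*\bfi)} X^{{}^*\bfi} X_V$.  There is no real obstacle in this argument: it is essentially bookkeeping in the quantum torus, and the only substantive input is the homological observation $\langle\alpha_i,\bfi\rangle=0$ for $i\in\supp(V)$, which is precisely where both the disjoint-support hypothesis and the injectivity of $I$ enter.
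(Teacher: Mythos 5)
Your proposal is correct and follows essentially the same route as the paper: expand both products in the quantum torus, reduce the twist to the vanishing of $\Lambda(\bfe^*-{}^*\bfe,{}^*\bfi)=\langle\bfe,\bfi\rangle$ via Lemma~\ref{lem:identities}, and kill that Euler pairing using the disjointness of $\supp(\soc I)$ and $\supp(V)$ together with injectivity of $I$. The only cosmetic difference is that you verify $\langle\bfe,\bfi\rangle=0$ simple-by-simple (via $\Hom(S_i,I)=\Ext^1(S_i,I)=0$), whereas the paper argues directly that $\Hom(V,I)=0$ forces $\langle\bfe,\bfi\rangle=\langle\bfv,\bfi\rangle=0$; both are the same homological input.
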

 \begin{proof}
  We compute the following products using Lemma~\ref{lem:identities}:
  \begin{align*}
   X_VX^{{}^*\bfi}
   &= \sum\limits_{\bfe\in\cK(Q)} q^{-\half\langle\bfe,\bfv-\bfe\rangle} |Gr_\bfe^V| q^{\half\Lambda(-\bfe^*-{}^*(\bfv-\bfe),{}^*\bfi)} X^{-\bfe^*-{}^*(\bfv-\bfe)+{}^*\bfi}\\
   &= q^{-\half\Lambda({}^*\bfv,{}^*\bfi)}\sum\limits_{\bfe\in\cK(Q)} q^{-\half\langle\bfe,\bfv-\bfe\rangle} |Gr_\bfe^V| q^{\half\Lambda({}^*\bfe-\bfe^*,{}^*\bfi)} X^{-\bfe^*-{}^*(\bfv-\bfe)+{}^*\bfi}\\
   &= q^{-\half\Lambda({}^*\bfv,{}^*\bfi)}\sum\limits_{\bfe\in\cK(Q)} q^{-\half\langle\bfe,\bfv-\bfe\rangle} |Gr_\bfe^V| q^{-\half\langle\bfe,\bfi\rangle} X^{-\bfe^*-{}^*(\bfv-\bfe)+{}^*\bfi},\\
   X^{{}^*\bfi}X_V
   &= \sum\limits_{\bfe\in\cK(Q)} q^{-\half\langle\bfe,\bfv-\bfe\rangle} |Gr_\bfe^V| q^{\half\Lambda({}^*\bfi,-\bfe^*-{}^*(\bfv-\bfe))} X^{-\bfe^*-{}^*(\bfv-\bfe)+{}^*\bfi}\\
   &= q^{-\half\Lambda({}^*\bfi,{}^*\bfv)}\sum\limits_{\bfe\in\cK(Q)} q^{-\half\langle\bfe,\bfv-\bfe\rangle} |Gr_\bfe^V| q^{\half\Lambda({}^*\bfi,{}^*\bfe-\bfe^*)} X^{-\bfe^*-{}^*(\bfv-\bfe)+{}^*\bfi}\\
   &= q^{-\half\Lambda({}^*\bfv,{}^*\bfi)}\sum\limits_{\bfe\in\cK(Q)} q^{-\half\langle\bfe,\bfv-\bfe\rangle} |Gr_\bfe^V| q^{\half\langle\bfe,\bfi\rangle} X^{-\bfe^*-{}^*(\bfv-\bfe)+{}^*\bfi}.
  \end{align*}
  For any $i\in\supp(\soc I)$ there are no morphisms from $V$ to $S_i$ and thus $\Hom(V,I)=0$.  This implies $\langle\bfe,\bfi\rangle=\langle\bfv,\bfi\rangle=0$ and the claim follows.  
 \end{proof}
 \begin{remark}\label{rem:init_cluster_comm}
  It is clear that for any injective valued representations $I$ and $J$, we have
  \[X^{{}^*\bfi}X^{{}^*\bfj}=q^{\Lambda({}^*\bfi,{}^*\bfj)}X^{{}^*\bfj}X^{{}^*\bfi}.\]
  In particular, if $I=I_i$ and $J=I_j$ are the injective hulls of the simples $S_i$ and $S_j$, respectively, then we have 
  \[X^{{}^*\bfi}X^{{}^*\bfj}=q^{\lambda_{ij}}X^{{}^*\bfj}X^{{}^*\bfi}.\]
 \end{remark}

 We see from Theorem~\ref{th:qcc-comm} that a family of valued representations $V_1,\ldots,V_k$ will have quasi-commuting quantum cluster characters $X_{V_1},\ldots,X_{V_k}$ exactly when $V_1\oplus\cdots\oplus V_k$ is rigid.  Moreover, Theorem~\ref{th:qcc-comm} implies that for any rigid decomposable valued representation $V$ we may factor the quantum cluster character $X_V$.  This suggests that we should further restrict to indecomposable $V_i$ such that $V_i\not\cong V_j$ for $i\ne j$.  Since they both give rise to quasi-commuting families we would naturally suspect that there should be a relationship between clusters of $\cA_q(\tilde{B},\Lambda)$ and basic rigid representations of $(Q,\bfd)$.  Now the support condition satisfied by local tilting representations combined with Theorem~\ref{th:init-comm} is exactly what is needed to guarantee that we can obtain in this way a full cluster of $n$ mutable variables.  We will make these remarks precise in Section~\ref{sec:quantum_seed} when we explicitly construct a seed of $\cA_q(\tilde{B},\Lambda)$ from each local tilting representation of $(Q,\bfd)$.

 \section{Mutations of Exchange Matrices}\label{sec:exch_mut} Here we show that the Fomin-Zelevinsky mutation of exchange matrices associated to local tilting representations coincides with the mutation operation for local tilting representations defined in Section~\ref{sec:tilt_matrix}.

 Since the valued quiver $Q$ has no oriented cycles, according to \cite[Section 2.4]{ringel2} the Ringel-Euler form is non-degenerate.  Also by Theorem~\ref{th:hr} the classes of indecomposable summands of a local tilting representation $T$ are linearly independent in the Grothendieck group $\cK(Q)$.  Since there are as many non-isomorphic summands of $T$ as simple modules for $Q_T$, their isomorphism classes form a basis of the Grothendieck group $\cK(Q_T)\subset\cK(Q)$.  Notice that the isomorphism classes of projective objects $P_i$ for $i\notin\supp T$ are linearly independent in $\cK(Q)$ and the Ringel-Euler form is non-degenerate when restricted to their span.  Moreover, if we consider the isomorphism classes of summands of $T$ and these projective representations we obtain a basis for all of $\cK(Q)$.  Define the natural projections $\pi_T:\cK(Q)\to\cK(Q_T)$ and $\pi_T^c:\cK(Q)\to \spa\{[P_i]:i\notin\supp T\}$.
 \begin{definition}\mbox{}
  \begin{enumerate}
   \item Since the Ringel-Euler form is non-degenerate on $\cK(Q_T)$, for $i\in\supp T$ we may define left and right duals $\lambda_i,\rho_i\in\cK(Q)$ and $\lambda_i^T, \rho_i^T$ in $\cK(Q_T)$ satisfying: 
   \begin{align*}
    \pi_T(\lambda_i)=\lambda_i^T &\text{ and } \pi_T(\rho_i)=\rho_i^T,\\
    \langle\lambda_i^T,T_j\rangle=\delta_{ij}d_i &\text{ and } \langle T_j,\rho_i^T\rangle=\delta_{ij}d_i \text{ for all } j\in\supp T.
   \end{align*}
   \item For $i\notin\supp T$ we define left and right duals $\lambda_i,\rho_i\in\cK(Q)$, $\lambda_i^T,\rho_i^T\in\cK(Q_T)$, and $\lambda_i^c,\rho_i^c\in\spa\{P_i:i\notin\supp T\}$ satisfying the following identities:
   \begin{align*}
    \pi_T(\lambda_i)=\lambda_i^T &\text{ and } \pi_T(\rho_i)=\rho_i^T,\\
    \pi_T^c(\lambda_i)=\lambda_i^c &\text{ and } \pi_T^c(\rho_i)=\rho_i^c,\\
    \langle\lambda_i^T,T_j\rangle=\langle S_i,T_j\rangle &\text{ and } \langle T_j,\rho_i^T\rangle=\langle T_j,S_i\rangle \text{ for all } j\in\supp T,\\
    \langle\lambda_i^c,P_j\rangle=\delta_{ij}d_i &\text{ and } \langle P_j,\rho_i^c\rangle=\delta_{ij}d_i \text{ for all } j\notin\supp T.
   \end{align*}
  \end{enumerate} 
 \end{definition}
 \noindent Write $\lambda_i=\sum\limits_{j\in\supp T} \ell_{ij}[T_j]+\sum\limits_{j\notin\supp T} \ell_{ij}[P_j]$ and $\rho_i=\sum\limits_{j\in\supp T} r_{ij}[T_j]+\sum\limits_{j\notin\supp T} r_{ij}[P_j]$.  Note that it still remains to define $\ell_{ij}$ and $r_{ij}$ for $i\in\supp T$ and $j\notin\supp T$.  We begin with the following Lemma which motivates that definition.
 \begin{lemma}\mbox{}
  \begin{enumerate}
   \item The matrices $L^T=(\ell_{ij})$ and $R^T=(r_{ij})$ with rows and columns labeled by $\supp T$ are related by $L^TD^T=(D^TR^T)^t$ where $D^T=(d_{ij})$ is the diagonal matrix given by $d_{ij}=\delta_{ij}d_i$ for $i,j\in\supp T$.
   \item The matrices $L^c=(\ell_{ij})$ and $R^c=(r_{ij})$ with rows and columns labeled by $i,j\notin\supp T$ are related by $L^cD^c=(D^cR^c)^t$ where $D^c=(d_{ij})$ is the diagonal matrix given by $d_{ij}=\delta_{ij}d_i$ for $i,j\notin\supp T$. 
  \end{enumerate}
 \end{lemma}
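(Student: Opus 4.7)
My plan is to reduce both statements to straightforward linear algebra once the defining conditions for the duals are translated into matrix equations involving an appropriate Gram matrix of the restricted Ringel-Euler form. The main obstacle I anticipate is establishing invertibility of the relevant Gram matrices; once that is in hand, the rest is a formal computation.

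For (1), I would set $G^T = \bigl(\langle T_i, T_j\rangle\bigr)_{i,j\in\supp T}$. Since $Q$ is acyclic and the classes $[T_i]$ for $i \in \supp T$ form a basis of $\cK(Q_T)$ by Theorem~\ref{th:hr}.2, the restricted Ringel-Euler form is non-degenerate and $G^T$ is invertible. Writing $\lambda_i^T = \sum_k \ell_{ik}[T_k]$ and $\rho_i^T = \sum_k r_{ik}[T_k]$, the defining relations $\langle\lambda_i^T,T_j\rangle = \delta_{ij}d_i$ and $\langle T_j,\rho_i^T\rangle = \delta_{ij}d_i$ translate to the matrix equations
\[ L^T G^T = D^T\quad\text{and}\quad G^T (R^T)^t = D^T, \]
giving $L^T = D^T(G^T)^{-1}$ and $(R^T)^t = (G^T)^{-1}D^T$. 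Multiplying the first identity on the right by $D^T$ and substituting the second then yields
\[ L^T D^T = D^T (G^T)^{-1} D^T = D^T (R^T)^t, \]
which, after transposition on the right-hand side, is the asserted relation.

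For (2) the same argument applies verbatim with $G^c = \bigl(\langle P_i, P_j\rangle\bigr)_{i,j\notin\supp T}$ replacing $G^T$ and $D^c$ replacing $D^T$. Here the step requiring more care is invertibility of $G^c$. I would order the vertices of $Q_0\setminus\supp T$ topologically using the acyclicity of $Q$ and observe that $\langle P_i,P_j\rangle = d_i\dim_{\FF_{d_i}}(P_j)_i$, which vanishes unless there is a directed path from $j$ to $i$ in $Q$. This places $G^c$ into triangular form with nonzero diagonal entries $\langle P_i,P_i\rangle = \dim_\FF\End(P_i) = d_i$, hence $G^c$ is invertible, and the matrix calculation above produces the identity $L^c D^c = D^c (R^c)^t$ as desired.
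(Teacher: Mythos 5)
Your proof is correct, and it establishes exactly the entrywise identity the paper needs, namely $\ell_{ij}d_j=d_i r_{ji}$ for $i,j\in\supp T$ and for $i,j\notin\supp T$ (equivalently $L^TD^T=D^T(R^T)^t$ and $L^cD^c=D^c(R^c)^t$), which is the form of the relation actually invoked later, for instance in the proof of Lemma~\ref{lem:24}; the literal expression $(D^TR^T)^t$ in the statement differs from this only by a reordering of the factors, a discrepancy already present between the paper's statement and its own proof. Your route is, however, not the paper's. The paper simply pairs the two duals: evaluating $\langle\lambda_i^T,\rho_j^T\rangle$ once by expanding $\rho_j^T$ in the $[T_k]$ and using $\langle\lambda_i^T,T_k\rangle=\delta_{ik}d_i$, and once by expanding $\lambda_i^T$ and using $\langle T_k,\rho_j^T\rangle=\delta_{kj}d_j$, gives $\ell_{ij}d_j=r_{ji}d_i$ immediately, with no inversion; in matrix terms this is computing $L^TG^T(R^T)^t$ in two ways, so your calculation is the same one routed through $(G^T)^{-1}$. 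The cost of your route is that you must know the Gram matrices are invertible, but you also supply that: the triangularity argument for $G^c$ (topological order on $Q_0\setminus\supp T$, $\langle P_i,P_j\rangle$ supported on paths, diagonal entries $\dim_\FF\End(P_i)=d_i$) substantiates the non-degeneracy on $\spa\{[P_i]:i\notin\supp T\}$ that the paper asserts without proof at the start of Section~\ref{sec:exch_mut}, and invertibility of $G^T$ follows, as you say, from acyclicity of $Q_T$ together with the fact that the $[T_i]$ form a basis of $\cK(Q_T)$. So: correct, same underlying duality computation, slightly heavier packaging that in exchange makes an implicit non-degeneracy claim explicit.
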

 \begin{proof}
  By pairing $\lambda_i^T$ and $\rho_j^T$ for $i,j\in\supp T$ we see that $\ell_{ij}^Td_j=r_{ji}^Td_i$.  Similarly, by pairing $\lambda_i^c$ and $\rho_j^c$ for $i,j\notin\supp T$ we see that $\ell_{ij}^cd_j=r_{ji}^cd_i$.  The result follows.
 \end{proof}
 \noindent Thus we complete the definition of the matrices $L=(\ell_{ij})$ and $R=(r_{ij})$ by declaring that $LD=(DR)^t$.  Note that this uniquely defines $\pi_T^c(\lambda_i)$ and $\pi_T^c(\rho_i)$ for $i\in\supp T$.
 
 Suppose $k\in\supp(T)=\supp(\bar{T})$.  We will write $\lambda_i^{T_k}$, $\rho_i^{T_k}$, $\lambda_i^{T_k^*}$ and $\rho_i^{T_k^*}$ for the duals with respect to the $T_k\oplus\bar{T}$-basis and the $T_k^*\oplus\bar{T}$-basis respectively.  Recall the matrix $B_T=(b_{ij}^{T_k})$ from Section~\ref{sec:tilt_matrix}.
 \noindent The following Lemma is based upon \cite[Lemmas 24 \& 25]{hub1}.
 \begin{lemma}\label{lem:24}
  We may compute $\pi_T(\lambda_k^{T_k})$, $\pi_T(\lambda_k^{T_k^*})$, $\pi_T(\rho_k^{T_k})$, and $\pi_T(\rho_k^{T_k^*})$ as follows:
  \begin{equation*}
   \pi_T(\lambda_k^{T_k})=-\pi_T(\lambda_k^{T_k^*})=[T_k]-[A] \text{ and } -\pi_T(\rho_k^{T_k})=\pi_T(\rho_k^{T_k^*})=[T_k^*]-[D].
  \end{equation*}
  We may also compute $\pi_T^c(\lambda_k^{T_k})$, $\pi_T^c(\lambda_k^{T_k^*})$, $\pi_T^c(\rho_k^{T_k})$, and $\pi_T^c(\rho_k^{T_k^*})$ as follows:
  \begin{equation*}
   \pi_T^c(\lambda_k^{T_k})=-\pi_T^c(\lambda_k^{T_k^*})=[B]^{*_P} \text{ and } \pi_T^c(\rho_k^{T_k})=-\pi_T^c(\rho_k^{T_k^*})=-{}^{*_P}[C],
  \end{equation*}
  where we write $\bfe^{*_P}=\sum\limits_{j\notin\supp T} \langle\bfe,\alpha_j^\vee\rangle[P_j]$ and ${}^{*_P}\bfe=\sum\limits_{j\notin\supp T} \langle\alpha_j^\vee,\bfe\rangle[P_j]$.  Moreover, for $i\in\supp T$ we have
  \begin{align*}
   \lambda_i^{T_k^*}=\lambda_i^{T_k}+[-b_{ik}^{T_k}]_+d_i/d_k\lambda_k^{T_k} &\text{ and } \rho_i^{T_k^*}=\rho_i^{T_k}+[-b_{ik}^{T_k}]_+d_i/d_k\rho_k^{T_k},
  \end{align*}
  and for $i\notin\supp T$ we have
  \begin{align*}
   \lambda_i^{T_k^*}=\lambda_i^{T_k} &\text{ and } \rho_i^{T_k^*}=\rho_i^{T_k}.
  \end{align*}
 \end{lemma}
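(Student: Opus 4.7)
The plan is to verify each identity by checking the defining conditions of the duals against the exchange sequences of Section~\ref{sec:tilt_matrix}, the $\Hom$ and $\Ext^1$ vanishings proved alongside Proposition~\ref{prop:exch2}, the bilinearity of the Ringel-Euler form, and Lemma~\ref{lem:h1}.

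For the $\pi_T$-components I would start with sequence~\eqref{eq:AD_seq2}, which gives $[T_k]-[A]=[B]$.  The identity $\pi_T(\lambda_k^{T_k})=[T_k]-[A]$ then reduces to $\langle B,T_j\rangle=\delta_{jk}d_k$ for every $j\in\supp T$.  For $j\in\supp\bar{T}$ the vanishings $\Hom(B,\bar{T})=0=\Ext^1(B,\bar{T})$ obtained in the proof of Proposition~\ref{prop:exch2} give $\langle B,T_j\rangle=0$; for $j=k$ the identifications $\Hom(B,T_k)\cong\End(T_k)$ and $\Ext^1(B,T_k)=0$ give $\langle B,T_k\rangle=d_k$.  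For $\pi_T(\lambda_k^{T_k^*})=-[B]$ I would apply $\Hom(B,-)$ to sequence~\eqref{eq:E_seq}; since $E\in add(\bar{T})$ this forces $\Hom(B,T_k^*)=0$ and $\Ext^1(B,T_k^*)\cong\Hom(B,T_k)$, so $\langle B,T_k^*\rangle=-d_k$.  The formulas $\pi_T(\rho_k^{T_k})=-[C]$ and $\pi_T(\rho_k^{T_k^*})=[C]$ are obtained symmetrically from sequences~\eqref{eq:AD_seq1} and \eqref{eq:AD_seq3}.

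The $\pi_T^c$-components are the technical core.  The decisive observation is that $[B]=[T_k]-[A]$ and $[C]=[T_k^*]-[D]$ both lie in $\cK(Q_T)\subset\cK(Q)$.  For $j\notin\supp T$ the right dual $\rho_j^T$ is characterized by $\langle T_i,\rho_j^T\rangle=\langle T_i,S_j\rangle$ for all $i\in\supp T$, so by bilinearity $\langle Y,\rho_j^T\rangle=\langle Y,S_j\rangle$ for every $Y\in\cK(Q_T)$.  Applying this to $Y=[B]$ and using the $\pi_T$-pairings just computed gives
\[
 r_{jk}^{T_k}\,d_k=\langle B,\rho_j^T\rangle=\langle B,S_j\rangle=d_j\langle B,\alpha_j^\vee\rangle,
\]
so $r_{jk}^{T_k}=(d_j/d_k)\langle B,\alpha_j^\vee\rangle$.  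The defining constraint $\ell_{kj}^{T_k}d_j=r_{jk}^{T_k}d_k$ between $L^{T_k}$ and $R^{T_k}$ then produces $\ell_{kj}^{T_k}=\langle B,\alpha_j^\vee\rangle$, which is precisely the $[P_j]$-coefficient of $[B]^{*_P}$.  An analogous computation with $Y=-[C]$ paired against the left dual $\lambda_j^T$ yields $\pi_T^c(\rho_k^{T_k})=-{}^{*_P}[C]$.

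For the transformation identities, additivity of $\langle-,-\rangle$ on sequence~\eqref{eq:E_seq} gives $\langle\lambda_i^{T_k},T_k^*\rangle=\langle\lambda_i^{T_k},E\rangle-\langle\lambda_i^{T_k},T_k\rangle$.  For $i\in\supp T$ with $i\ne k$, using $E\in add(\bar{T})$ together with $\langle\lambda_i^{T_k},T_j\rangle=\delta_{ij}d_i$ for $j\in\supp T$ yields $\langle\lambda_i^{T_k},T_k^*\rangle=e_id_i$, where $e_i$ is the multiplicity of $T_i$ in $E$.  Lemma~\ref{lem:h1}, combined with $\bfb^k=[A]+[D]+{}^{*_P}[I]-[E]$, forces $e_i=[-b_{ik}^{T_k}]_+$: in each case either the multiplicity of $T_i$ in $A\oplus D$ vanishes or $e_i$ does, and the two sign regimes reproduce the formula.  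This verifies the $\pi_T$-part of $\lambda_i^{T_k^*}-\lambda_i^{T_k}=[-b_{ik}^{T_k}]_+(d_i/d_k)\lambda_k^{T_k}$; for the $\pi_T^c$-part, rewriting $[T_k]=[E]-[T_k^*]$ in the new basis alters the $[T_i]$-coefficients of $\rho_j^{T_k}$ (for $i\in\supp\bar T$) by exactly $r_{jk}^{T_k}e_i$, and the compatibility $\ell d=rd$ converts this correction into the claimed $[-b_{ik}^{T_k}]_+(d_i/d_k)[B]^{*_P}$.  For $i\notin\supp T$, the same additivity combined with $\langle\lambda_i^{T_k},T_j\rangle=\langle S_i,T_j\rangle$ for $j\in\supp T$ yields $\langle\lambda_i^{T_k},T_k^*\rangle=\langle S_i,T_k^*\rangle$, so $\lambda_i^{T_k}$ already satisfies the defining pairing of $\lambda_i^{T_k^*}$; since the $\pi_T^c$-conditions do not involve the tilting, this gives $\lambda_i^{T_k^*}=\lambda_i^{T_k}$.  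The $\rho$-statements follow by parallel arguments.

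The main obstacle I anticipate is the bookkeeping for the $\pi_T^c$-parts, since for $i\in\supp T$ those components are defined only indirectly through the compatibility $\ell_{ij}d_j=r_{ji}d_i$ with the $\rho_j$'s for $j\notin\supp T$.  The saving observation throughout is that $[B]$ and $[C]$ lie in $\cK(Q_T)$, which converts this otherwise opaque compatibility into the concrete pairing identity $\langle Y,\rho_j^T\rangle=\langle Y,S_j\rangle$ evaluated on explicit subquotient classes.
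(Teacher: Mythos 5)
Your argument is correct and follows essentially the same route as the paper's proof: both verify the defining pairings of the duals using the $\Hom$/$\Ext^1$ identities established in the proof of Proposition~\ref{prop:exch2}, identify $[-b_{ik}^{T_k}]_+$ with the multiplicity of $T_i$ in $E$ via Lemma~\ref{lem:h1}, and transfer everything to the $\pi_T^c$-components through the compatibility $\ell_{ij}d_j=r_{ji}d_i$ and the non-degeneracy of the Ringel-Euler form. The only differences are cosmetic: you check the $T_k^*$-basis duals directly against sequence~\eqref{eq:E_seq} instead of the paper's ``sum pairs to zero'' computation, and you obtain the $\pi_T^c$-part of the transformation identities by re-expressing $\rho_j^{T_k}=\rho_j^{T_k^*}$ (for $j\notin\supp T$) in the mutated basis rather than pairing against $\pi_T^c(\rho_j^{T_k})$.
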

 \begin{proof}
  According to the proof of Proposition~\ref{prop:exch2} we have $\langle A, T_k\rangle=0$ and $\langle A,T_j\rangle=\langle T_k,T_j\rangle$ for all $j\in\supp(T)$, $j\ne k$.  Since $A\in add(\bar{T})$ we see that $\pi_T(\lambda_k^{T_k})=[T_k]-[A]$.  Similarly one sees that $\pi_T(\rho_k^{T_k^*})=[T_k^*]-[D]$.  Now notice that
  \begin{align*}
   &\langle\pi_T(\lambda_k^{T_k})+\pi_T(\lambda_k^{T_k^*}),[T_j]\rangle=0 \text{ and }\\ 
   &\langle\pi_T(\lambda_k^{T_k})+\pi_T(\lambda_k^{T_k^*}),[T_k]\rangle=d_k+\langle\pi_T(\lambda_k^{T_k^*}),[E]-[T_k^*]\rangle=d_k-d_k=0,
  \end{align*}
  so that $\pi_T(\lambda_k^{T_k})+\pi_T(\lambda_k^{T_k^*})=0$ by the non-degeneracy of the Ringel-Euler form on $\cK(Q_T)$.  Similarly we compute
  \begin{align*}
   &\langle[T_j],\pi_T(\rho_k^{T_k})+\pi_T(\rho_k^{T_k^*})\rangle=0 \text{ and }\\ 
   &\langle[T_k],\pi_T(\rho_k^{T_k})+\pi_T(\rho_k^{T_k^*})\rangle=d_k+\langle[E]-[T_k^*],\pi_T(\rho_k^{T_k^*})\rangle=d_k-d_k=0,
  \end{align*}
  so that $\pi_T(\rho_k^{T_k})+\pi_T(\rho_k^{T_k^*})=0$.  Assume $i,j\in\supp T$ and $i,j\ne k$.  Then we may obtain identities relating $\pi_T(\lambda_i^{T_k})$ and $\pi_T(\lambda_i^{T_k^*})$ as follows:
  \begin{align*}
   &\langle\pi_T(\lambda_i^{T_k})-\pi_T(\lambda_i^{T_k^*}),[T_j]\rangle=0 \text{ and }\\ 
   &\langle\pi_T(\lambda_i^{T_k})-\pi_T(\lambda_i^{T_k^*}),[T_k]\rangle=-\langle\pi_T(\lambda_i^{T_k^*}),[E]-[T_k^*]\rangle=-[-b_{ik}^{T_k}]_+d_i,
  \end{align*}
  so that $\pi_T(\lambda_i^{T_k})-\pi_T(\lambda_i^{T_k^*})=-[-b_{ik}^{T_k}]_+d_i/d_k\pi_T(\lambda_k^{T_k})$.  We remark that this implies the same identity holds when we replace $\pi_T$ by $\pi_T^c$.  Indeed, for $j\notin\supp T$ we may compute the pairing $\langle-,\pi_T^c(\rho_j^{T_k})\rangle$ on both sides of the desired identity as follows:
  \begin{align*}
   \langle\pi_T^c(\lambda_i^{T_k})-\pi_T^c(\lambda_i^{T_k^*}),\pi_T^c(\rho_j^{T_k})\rangle
   &=\ell_{ij}^{T_k}d_j-\ell_{ij}^{T_k^*}d_j\\
   &=r_{ji}^{T_k}d_i-r_{ji}^{T_k^*}d_i\\
   &=\langle\pi_T(\lambda_i^{T_k})-\pi_T(\lambda_i^{T_k^*}),\pi_T(\rho_j^{T_k})\rangle,\\ 
   \langle-[-b_{ik}^{T_k}]_+d_i/d_k\pi_T^c(\lambda_k^{T_k}),\pi_T^c(\rho_j^{T_k})\rangle
   &=-[-b_{ik}^{T_k}]_+d_i/d_k\ell_{kj}^{T_k}d_j\\
   &=-[-b_{ik}^{T_k}]_+d_i/d_kr_{jk}^{T_k}d_k\\
   &=\langle-[-b_{ik}^{T_k}]_+d_i/d_k\pi_T(\lambda_k^{T_k}),\pi_T(\rho_j^{T_k})\rangle.
  \end{align*}
  Now notice that the right hand sides of these are equal for all $j\notin\supp T$ by the previously obtained identity.  Since the Ringel-Euler form is non-degenerate this implies $\pi_T^c(\lambda_i^{T_k})-\pi_T^c(\lambda_i^{T_k^*})=-[-b_{ik}^{T_k}]_+d_i/d_k\pi_T^c(\lambda_k^{T_k})$.
  Similarly we obtain identities relating $\pi_T(\rho_i^{T_k})$ and $\pi_T(\rho_i^{T_k^*})$:
  \begin{align*}
   &\langle[T_j],\pi_T(\rho_i^{T_k})-\pi_T(\rho_i^{T_k^*})\rangle=0 \text{ and }\\ 
   &\langle[T_k],\pi_T(\rho_i^{T_k})-\pi_T(\rho_i^{T_k^*})\rangle=-\langle[E]-[T_k^*],\pi_T(\rho_i^{T_k^*})\rangle=-[-b_{ik}^{T_k}]_+d_i,
  \end{align*}
  so that $\pi_T(\rho_i^{T_k})-\pi_T(\rho_i^{T_k^*})=-[-b_{ik}^{T_k}]_+d_i/d_k\pi_T(\rho_k^{T_k})$ and $\pi_T^c(\rho_i^{T_k})-\pi_T^c(\rho_i^{T_k^*})=-[-b_{ik}^{T_k}]_+d_i/d_k\pi_T^c(\rho_k^{T_k})$ as above.  Notice that for $i\notin\supp T$ we have
  \begin{align*}
   &\langle\pi_T(\lambda_i^{T_k})-\pi_T(\lambda_i^{T_k^*}),[T_j]\rangle=0 \text{ and }\\
   &\langle\pi_T(\lambda_i^{T_k})-\pi_T(\lambda_i^{T_k^*}),[T_k]\rangle=\langle S_i,[T_k]\rangle-\langle S_i,[E]-[T_k^*]\rangle=0,
  \end{align*}
  so that the non-degeneracy of the Ringel-Euler form implies $\pi_T(\lambda_i^{T_k})-\pi_T(\lambda_i^{T_k^*})=0$.  Similarly one shows that $\pi_T(\rho_i^{T_k})-\pi_T(\rho_i^{T_k^*})=0$.  Note that we may write these identities in the $\bar{T}\oplus T_k$-basis and equate coefficients of $[T_k]$ to get $\ell_{ik}^{T_k}=-\ell_{ik}^{T_k^*}$ and $r_{ik}^{T_k}=-r_{ik}^{T_k^*}$, or equivalently $r_{ki}^{T_k}=-r_{ki}^{T_k^*}$ and $\ell_{ki}^{T_k}=-\ell_{ki}^{T_k^*}$.  In other words, we have $\pi_T^c(\rho_k^{T_k})=-\pi_T^c(\rho_k^{T_k^*})$ and $\pi_T^c(\lambda_k^{T_k})=-\pi_T^c(\lambda_k^{T_k^*})$.  For $i\notin\supp T$ we have the following:
  \begin{equation*}
   \langle\pi_T(\lambda_i^{T_k^*}),\pi_T(\rho_k^{T_k^*})\rangle=\langle\pi_T(\lambda_i^{T_k^*}),[T_k^*]-[D]\rangle=\langle S_i,[T_k^*]-[D]\rangle=\langle S_i,[C]\rangle.
  \end{equation*}
  On the other hand this is equal to $\ell_{ik}^{T_k^*}d_k=r_{ki}^{T_k^*}d_i$ and thus $\pi_T^c(\rho_k^{T_k^*})={}^{*_P}[C]$.  A similar computation with $\langle\pi_T(\lambda_k^{T_k}),\pi_T(\rho_i^{T_k})\rangle$ gives $\langle [B],S_i\rangle=r_{ik}^{T_k}d_k=\ell_{ki}^{T_k}d_i$ and $\pi_T^c(\lambda_k^{T_k})=[B]^{*_P}$.  Finally we note that $\pi_T^c(\lambda_i^{T_k})$ and $\pi_T^c(\lambda_i^{T_k^*})$ have the same defining equations.  Therefore we have $\pi_T^c(\lambda_i^{T_k})=\pi_T^c(\lambda_i^{T_k^*})$ and similarly $\pi_T^c(\rho_i^{T_k})=\pi_T^c(\rho_i^{T_k^*})$.
 \end{proof}
 \noindent The following Proposition is based upon \cite[Proposition 23]{hub1}.
 \begin{proposition}
  Thinking of the $k^{th}$ column $\bfb^k$ of $B_T$ as representing an element of $\cK(Q)$ written in the $\bar{T}\oplus T_k$-basis, we may write
  \begin{equation*}
   \bfb^k=\rho_k^{T_k}-\lambda_k^{T_k}.
  \end{equation*}
  Similarly, thinking of the $k^{th}$ column $\bfb^k$ of $B_{T_k^*\oplus\bar{T}}$ as representing an element of $\cK(Q)$ written in the $T_k^*\oplus\bar{T}$-basis, we may write
  \begin{equation*}
   \bfb^k=\rho_k^{T_k^*}-\lambda_k^{T_k^*}.
  \end{equation*}
 \end{proposition}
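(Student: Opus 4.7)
The plan is to exploit the direct sum decomposition $\cK(Q) = \cK(Q_T) \oplus \spa\{[P_j]: j \notin \supp T\}$ coming from the basis $\{[T_i]\}_{i\in\supp T}\cup\{[P_i]\}_{i\notin\supp T}$, and verify the asserted identity $\bfb^k = \rho_k^{T_k} - \lambda_k^{T_k}$ by matching the $\pi_T$- and $\pi_T^c$-components separately, using the definition $\bfb^k = [A] + [D] + {}^{*_P}[I] - [E]$ from Section~\ref{sec:tilt_matrix} against the formulas for the duals supplied by Lemma~\ref{lem:24}.

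First I will match the $\pi_T$-components. Since $A, D, E \in add(\bar{T}) \subset \cK(Q_T)$ by Proposition~\ref{prop:exch2} and Theorem~\ref{th:hu}, while ${}^{*_P}[I]$ lies in $\spa\{[P_j]: j \notin \supp T\}$, we have $\pi_T(\bfb^k) = [A] + [D] - [E]$. The defining sequence~\eqref{eq:E_seq} gives $[E] = [T_k] + [T_k^*]$ inside $\cK(Q_T)$, so $\pi_T(\bfb^k) = [A] + [D] - [T_k] - [T_k^*]$. On the other hand, Lemma~\ref{lem:24} yields $\pi_T(\rho_k^{T_k} - \lambda_k^{T_k}) = ([D] - [T_k^*]) - ([T_k] - [A])$, which agrees with the previous expression.

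Next I will match the $\pi_T^c$-components. Since $[A], [D], [E]$ are in $\cK(Q_T)$ and so are annihilated by $\pi_T^c$, we get $\pi_T^c(\bfb^k) = {}^{*_P}[I]$. Sequence~\eqref{eq:AD_seq3} rearranges to $[I] = [\tau B] - [C]$, giving ${}^{*_P}[I] = {}^{*_P}[\tau B] - {}^{*_P}[C]$. The Auslander-Reiten identity $\langle \bfv, \tau \bfw\rangle = -\langle \bfw, \bfv\rangle$ from Proposition~\ref{prop:AR}, applied with $\bfv = \alpha_j^\vee$ for each $j \notin \supp T$, gives $\langle \alpha_j^\vee, \tau\bfb\rangle = -\langle \bfb, \alpha_j^\vee\rangle$ and hence ${}^{*_P}[\tau B] = -[B]^{*_P}$. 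Therefore $\pi_T^c(\bfb^k) = -[B]^{*_P} - {}^{*_P}[C]$, which coincides with $\pi_T^c(\rho_k^{T_k}) - \pi_T^c(\lambda_k^{T_k}) = -{}^{*_P}[C] - [B]^{*_P}$ by Lemma~\ref{lem:24}.

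For the parallel statement concerning $B_{T_k^* \oplus \bar{T}}$, recall its $k$-th column is $-\bfb^k$ by construction. Lemma~\ref{lem:24} shows that both the $\pi_T$- and $\pi_T^c$-components of $\lambda_k$ and $\rho_k$ negate under the swap $T_k \leftrightarrow T_k^*$, so $\rho_k^{T_k^*} - \lambda_k^{T_k^*} = -(\rho_k^{T_k} - \lambda_k^{T_k}) = -\bfb^k$, yielding the claim. The only step with any real content is the Auslander-Reiten conversion ${}^{*_P}[\tau B] = -[B]^{*_P}$; everything else is bookkeeping with Lemma~\ref{lem:24} and the short exact sequences \eqref{eq:E_seq} and \eqref{eq:AD_seq3} produced in Section~\ref{sec:tilt_matrix}.
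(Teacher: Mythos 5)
Your proof is correct and follows essentially the same route as the paper: both rest on Lemma~\ref{lem:24}, the relation $[E]=[T_k]+[T_k^*]$ from \eqref{eq:E_seq}, the identity ${}^{*_P}[\tau B]=-[B]^{*_P}$ from the Auslander--Reiten formula, and $[I]=[\tau B]-[C]$ from \eqref{eq:AD_seq3}, with the second claim reduced to $\lambda_k^{T_k^*}=-\lambda_k^{T_k}$, $\rho_k^{T_k^*}=-\rho_k^{T_k}$ exactly as in the paper. Organizing the computation by the $\pi_T$- and $\pi_T^c$-components is only a cosmetic rearrangement of the paper's single chain of equalities, and your explicit justification of the Auslander--Reiten step (valid since $B$ has no projective summands) fills in a detail the paper leaves implicit.
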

 \begin{proof}
  According to Lemma~\ref{lem:24} we may write 
  \begin{align*}
   \rho_k^{T_k}-\lambda_k^{T_k}
   &=[D]-[T_k^*]-{}^{*_P}[C]-[T_k]+[A]-[B]^{*_P}\\
   &=[A]+[D]-[E]+{}^{*_P}[\tau B]-{}^{*_P}[C]\\
   &=[A]+[D]+{}^{*_P}[I]-[E],
  \end{align*}
  but this is exactly $\bfb^k$.  From above we see that $\rho_k^{T_k^*}=-\rho_k^{T_k}$ and $\lambda_k^{T_k^*}=-\lambda_k^{T_k}$, the result for $B_{T_k^*\oplus\bar{T}}$ follows.
 \end{proof}

 Let $\bar{T}$ be a local tilting representation with $k\notin\supp(\bar{T})$ and write $T=\bar{T}\oplus T_k^*$ where $T_k^*$ is the unique compliment to $\bar{T}$.  We will write $\lambda_i^{T_k^*}$, $\rho_i^{T_k^*}$, $\lambda_i^k$ and $\rho_i^k$ for the duals with respect to the $\bar{T}\oplus T_k^*$-basis and the $\bar{T}$-basis respectively.  We again recall the matrix $B_T=(b_{ij}^{T_k^*})$ defined in Section~\ref{sec:tilt_matrix}.  The following Lemma is based upon \cite[Lemma 28]{hub1}.
 \begin{lemma}\label{lem:28}
  We may compute $\pi_T(\lambda_k^{T_k^*})$, $\pi_{\bar{T}}(\lambda_k^k)$, $\pi_T(\rho_k^{T_k^*})$, and $\pi_{\bar{T}}(\rho_k^k)$ as follows:
  \begin{align*}
   \pi_T(\lambda_k^{T_k^*})=[T_k^*]-[F] &\text{ and } \pi_T(\rho_k^{T_k^*})=[T_k^*]-[G],\\
   \pi_{\bar{T}}(\lambda_k^k)=-[\rad\bar{P}_k] &\text{ and } \pi_{\bar{T}}(\rho_k^k)=-[\bar{I}_k/S_k].
  \end{align*}
  We may also compute $\pi_T^c(\lambda_k^{T_k^*})$, $\pi_{\bar{T}}^c(\lambda_k^k)$, $\pi_T^c(\rho_k^{T_k^*})$, and $\pi_{\bar{T}}^c(\rho_k^k)$ as follows:
  \begin{align*}
   \pi_T^c(\lambda_k^{T_k^*})=-[P']^{*_P} &\text{ and } \pi_T^c(\rho_k^{T_k^*})=-{}^{*_P}[I'],\\
   \pi_{\bar{T}}^c(\lambda_k^k)=[P_k]-{}^{*_P}[I'] &\text{ and } \pi_{\bar{T}}^c(\rho_k^k)=[P_k]-[P']^{*_P}.
  \end{align*}
  Moreover, for $i\in\supp\bar{T}$ we have
  \begin{align*}
   \pi_{\bar{T}}(\lambda_i^k)&=\pi_T(\lambda_i^{T_k^*})+[-b_{ik}^{T_k^*}]_+d_i/d_k\pi_T(\lambda_k^{T_k^*}) &\text{ and } &&\pi_{\bar{T}}(\rho_i^k)&=\pi_T(\rho_i^{T_k^*})+[b_{ik}^{T_k^*}]_+d_i/d_k\pi_T(\rho_k^{T_k^*}),\\
   \pi_{\bar{T}}^c(\lambda_i^k)&=\pi_T^c(\lambda_i^{T_k^*})+[-b_{ik}^{T_k^*}]_+d_i/d_k\pi_T^c(\lambda_k^{T_k^*}) &\text{ and } &&\pi_{\bar{T}}^c(\rho_i^k)&=\pi_T^c(\rho_i^{T_k^*})+[b_{ik}^{T_k^*}]_+d_i/d_k\pi_T^c(\rho_k^{T_k^*}),
  \end{align*}
  and for $i\notin\supp T$ we have
  \begin{align*}
   \pi_{\bar{T}}(\lambda_i^k)&=\pi_T(\lambda_i^{T_k^*})+[-b_{ik}^{T_k^*}]_+d_i/d_k\pi_T(\lambda_k^{T_k^*}) &\text{ and } &&\pi_{\bar{T}}(\rho_i^k)&=\pi_T(\rho_i^{T_k^*})+[b_{ik}^{T_k^*}]_+d_i/d_k\pi_T(\rho_k^{T_k^*}),\\
   \pi_{\bar{T}}^c(\lambda_i^k)&=\pi_T^c(\lambda_i^{T_k^*})-[b_{ik}^{T_k^*}]_+d_i/d_k\pi_{\bar{T}}^c(\lambda_k^k) &\text{ and } &&\pi_{\bar{T}}^c(\rho_i^k)&=\pi_T^c(\rho_i^{T_k^*})-[-b_{ik}^{T_k^*}]_+d_i/d_k\pi_{\bar{T}}^c(\rho_k^k).
  \end{align*}
 \end{lemma}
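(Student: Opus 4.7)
The plan is to adapt the proof of Lemma~\ref{lem:24}: first verify the four ``diagonal'' identities for the $k$-th dual projections by direct pairing computations, then deduce the six transition formulas for the other $\lambda_i, \rho_i$ exactly as in Lemma~\ref{lem:24}. The key new ingredients are the four-term sequences
\[0\to P'\to P_k\to T_k^*\to F\to 0\quad\text{and}\quad 0\to G\to T_k^*\to I_k\to I'\to 0\]
from Proposition~\ref{prop:approx}, together with the three-term sequences $0\to\bar{P}_k\to T_k^*\to F\to 0$ and $0\to G\to T_k^*\to\bar{I}_k\to 0$ from Lemma~\ref{lem:morphisms}. These furnish the $\cK(Q)$-identities $[\bar{P}_k]=[T_k^*]-[F]=[P_k]-[P']$ and $[\bar{I}_k]=[T_k^*]-[G]=[I_k]-[I']$.

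I would first confirm $\pi_T(\lambda_k^{T_k^*})=[\bar{P}_k]$ by pairing against $T_j$ for $j\in\supp T$. Since $\bar{P}_k$ is projective in $\Gamma_{Q_T}$ and extensions of $T_j$ by $\bar{P}_k$ in $\Rep_\FF(Q,\bfd)$ automatically lie in $\Rep_\FF(Q_T,\bfd)$, the $\Ext$-term vanishes and $\langle\bar{P}_k,T_j\rangle=d_k\dim_{\FF_{d_k}}(T_j)_k$; this is zero for $j\in\supp\bar{T}$ (since $k\notin\supp\bar{T}$ forces $(T_j)_k=0$) and equals $d_k$ for $j=k$ by Lemma~\ref{lem:ends}. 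The formulas $\pi_T(\rho_k^{T_k^*})=[\bar{I}_k]$, $\pi_{\bar{T}}(\lambda_k^k)=[S_k]-[\bar{P}_k]$, and $\pi_{\bar{T}}(\rho_k^k)=[S_k]-[\bar{I}_k]$ follow analogously, using injectivity of $\bar{I}_k$ in $\Gamma_{Q_T}$ and the sequences $0\to\rad\bar{P}_k\to\bar{P}_k\to S_k\to 0$ and $0\to S_k\to\bar{I}_k\to\bar{I}_k/S_k\to 0$.

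The $\pi^c$-identities are then obtained via the compatibility $LD=(DR)^t$, equivalent to $\ell_{ij}d_j=r_{ji}d_i$. For $j\notin\supp T$ this yields
\[\ell_{kj}^{T_k^*}d_j=r_{jk}^{T_k^*}d_k=\langle\pi_T(\lambda_k^{T_k^*}),\pi_T(\rho_j^{T_k^*})\rangle=\langle[\bar{P}_k],S_j\rangle,\]
where the last equality uses the defining relation $\langle T_m,\pi_T(\rho_j^{T_k^*})\rangle=\langle T_m,S_j\rangle$. Substituting $[\bar{P}_k]=[P_k]-[P']$ and using $\langle P_k,S_j\rangle=0$ (since $j\neq k$) together with projectivity of $P'$ gives $\ell_{kj}^{T_k^*}=-\langle P',\alpha_j^\vee\rangle$, hence $\pi_T^c(\lambda_k^{T_k^*})=-[P']^{*_P}$. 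The same cross-pairing with $[\bar{I}_k]=[I_k]-[I']$ gives $\pi_T^c(\rho_k^{T_k^*})=-{}^{*_P}[I']$. For the two $\pi_{\bar{T}}^c$ identities, since $k\notin\supp\bar{T}$ the vector $P_k$ now lies in the basis of $\pi_{\bar{T}}^c$, and repeating the cross-pairing with $\bar{T}$-duals (noting that the $\pi_{\bar T}$-part $[S_k]-[\bar P_k]$ contains the new term $[S_k]$) produces the additional $[P_k]$ summand in the formulas $\pi_{\bar{T}}^c(\lambda_k^k)=[P_k]-{}^{*_P}[I']$ and $\pi_{\bar{T}}^c(\rho_k^k)=[P_k]-[P']^{*_P}$.

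Finally, the transition identities relating $\lambda_i^k$ and $\lambda_i^{T_k^*}$ (and similarly for $\rho$) with $i\ne k$ are proved exactly as in Lemma~\ref{lem:24}: both sides give identical pairings against $T_j$ for $j\in\supp\bar{T}$, whereas pairing against $T_k^*$ produces the correction coefficient $[-b_{ik}^{T_k^*}]_+d_i/d_k$ (respectively $[b_{ik}^{T_k^*}]_+d_i/d_k$ for $\rho$), now extracted from the coresolution $0\to\bar{P}_k\to T_k^*\to F\to 0$ in place of the non-split extension \eqref{eq:E_seq}. The main obstacle is the asymmetry in the $\pi_{\bar{T}}^c$ transition formula for $i\notin\supp T$, which uses $\pi_{\bar{T}}^c(\lambda_k^k)$ rather than $\pi_T^c(\lambda_k^{T_k^*})$ and carries the opposite sign and coefficient $[b_{ik}^{T_k^*}]_+$; this asymmetry reflects the emergence of $[P_k]$ as a new basis vector when passing from $T$ to $\bar{T}$, and must be tracked carefully through the $LD=(DR)^t$ compatibility.
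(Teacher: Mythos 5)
Your plan reproduces the paper's argument for the easier half of the lemma: the identities $\pi_T(\lambda_k^{T_k^*})=[T_k^*]-[F]$, $\pi_T(\rho_k^{T_k^*})=[T_k^*]-[G]$, $\pi_{\bar T}(\lambda_k^k)=-[\rad\bar P_k]$, $\pi_{\bar T}(\rho_k^k)=-[\bar I_k/S_k]$ by pairing against the $T_j$, and the determination of $\pi_T^c(\lambda_k^{T_k^*})=-[P']^{*_P}$, $\pi_T^c(\rho_k^{T_k^*})=-{}^{*_P}[I']$ by cross-pairing through $\ell_{ij}d_j=r_{ji}d_i$ (this is exactly how the paper gets them, since for $k\in\supp T$ these coefficients are only defined through that convention). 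However, your treatment of $\pi_{\bar T}^c(\lambda_k^k)$ and $\pi_{\bar T}^c(\rho_k^k)$ is not a proof. For the local tilting representation $\bar T$ the vertex $k$ lies \emph{outside} the support, so by case (2) of the definition these complement components are defined \emph{directly} by $\langle\lambda_k^c,P_j\rangle=\delta_{kj}d_k$ and $\langle P_j,\rho_k^c\rangle=\delta_{kj}d_k$ for $j\notin\supp\bar T$; the $L D=(DR)^t$ convention plays no role, and the $\pi_{\bar T}$-component ($-[\rad\bar P_k]$) carries no information about the $c$-component, so the heuristic that ``$[S_k]$ in the $\pi_{\bar T}$-part produces the $[P_k]$ summand'' is not a valid mechanism. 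What is actually needed is the direct verification the paper gives: from $0\to P'\to P_k\to\bar P_k\to 0$ and $\Hom(P_j,\bar P_k)=0$ one gets $\Hom(P_j,P')\cong\Hom(P_j,P_k)$ for $j\notin\supp T$ together with $\Hom(P_k,P')=0$, whence $\pi_{\bar T}^c(\rho_k^k)=[P_k]-[P']^{*_P}$; and for the $\lambda$-side one must pass the computation $\Hom(I',I_j)\cong\Hom(I_k,I_j)$ (from $0\to\bar I_k\to I_k\to I'\to 0$) through the inverse Nakayama functor to obtain $\Hom(\nu^{-1}I',P_j)\cong\Hom(P_k,P_j)$ and $\Hom(\nu^{-1}I',P_k)=0$, giving $\pi_{\bar T}^c(\lambda_k^k)=[P_k]-{}^{*_P}[I']$. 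None of this appears in your outline.

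The second gap is the family of complement-component transition identities, which is precisely the new content of this lemma compared with Lemma~\ref{lem:24}, where the two bases shared the same complement. Here the complements of $\supp T$ and $\supp\bar T$ differ by $\{k\}$, so the $\pi^c$-identities do not follow ``exactly as in Lemma~\ref{lem:24}''. In the paper, the $\pi^c$-transition formulas for $i\in\supp\bar T$ are extracted by taking the $\pi_T$-transition identity for $i\notin\supp T$, reading off the coefficient of $[T_j]$, and converting it through $\ell_{ij}d_j=r_{ji}d_i$ into the coefficient of $[P_i]$ in the dual statement; and the $\pi^c$-identities for $i\notin\supp T$ (the ones with the flipped sign, the coefficient $[b_{ik}^{T_k^*}]_+$, and $\pi_{\bar T}^c(\lambda_k^k)$ in place of $\pi_T^c(\lambda_k^{T_k^*})$) are proved by pairing against $[P_j]$ and $[P_k]$, using $\langle\pi_T^c(\lambda_i^{T_k^*}),[P_k]\rangle=\langle\pi_T^c(\lambda_i^{T_k^*}),[P']\rangle$ and $\langle[P_k],\pi_T^c(\rho_i^{T_k^*})\rangle=\langle[\nu^{-1}I'],\pi_T^c(\rho_i^{T_k^*})\rangle$. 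You correctly identify this asymmetry as ``the main obstacle,'' but saying it ``must be tracked carefully through the $LD=(DR)^t$ compatibility'' defers exactly the computation the lemma requires; as it stands the proposal establishes neither the $\pi^c$-transition formulas nor the two $\pi_{\bar T}^c$ diagonal formulas.
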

 \begin{proof}
  Applying the functor $\Hom(-,T_j)$ for $j\in\supp(\bar{T})$ to the sequence 
  \begin{center}
   \begin{tikzpicture}
    \matrix (m) [matrix of math nodes, row sep=3em, column sep=2.5em, text height=1.5ex, text depth=0.25ex]
     {0 & \bar{P}_k & T_k^* & F & 0\\};
    \path[->]
     (m-1-1) edge (m-1-2)
     (m-1-2) edge (m-1-3)
     (m-1-3) edge (m-1-4)
     (m-1-4) edge (m-1-5);
   \end{tikzpicture}
  \end{center}
  shows that $\Hom(F,T_j)\cong\Hom(T_k^*,T_j)$ and thus $\langle F,T_j\rangle=\langle T_k^*,T_j\rangle$.  Similarly we may apply $\Hom(T_j,-)$ to the sequence
  \begin{center}
   \begin{tikzpicture}
    \matrix (m) [matrix of math nodes, row sep=3em, column sep=2.5em, text height=1.5ex, text depth=0.25ex]
     {0 & G & T_k^* & \bar{I}_k & 0\\};
    \path[->]
     (m-1-1) edge (m-1-2)
     (m-1-2) edge (m-1-3)
     (m-1-3) edge (m-1-4)
     (m-1-4) edge (m-1-5);
   \end{tikzpicture}
  \end{center}
  to see that $\langle T_j,G\rangle=\langle T_j,T_k^*\rangle$.  From the proof of Proposition~\ref{prop:exch2} we have that $\Hom(F,T_k^*)=0$ and by a similar argument $\Hom(T_k^*,G)=0$.  Since $F,G\in add(\bar{T})$ we may identify $\pi_T(\lambda_k^{T_k^*})=[T_k^*]-[F]$ and $\pi_T(\rho_k^{T_k^*})=[T_k^*]-[G]$.  

  Recall that we have the following short exact sequences defining $P'$ and $F$:
  \begin{center}
   \begin{tikzpicture}
    \matrix (m) [matrix of math nodes, row sep=1em, column sep=2.5em, text height=1.5ex, text depth=0.25ex]
     {0 & \bar{P}_k & T_k^* & F & 0\,,\\ 0 & P' & P_k & \bar{P}_k & 0\,.\\};
    \path[->]
     (m-1-1) edge (m-1-2)
     (m-1-2) edge (m-1-3)
     (m-1-3) edge (m-1-4)
     (m-1-4) edge (m-1-5)
     (m-2-1) edge (m-2-2)
     (m-2-2) edge (m-2-3)
     (m-2-3) edge (m-2-4)
     (m-2-4) edge (m-2-5);
   \end{tikzpicture}
  \end{center}
  Applying $\Hom(P_i,-)$ to the first sequence shows that $\Hom(P_i,\bar{P}_k)=0$ for $i\notin\supp T$ and thus applying the same functor to the second sequence gives $\Hom(P_i,P')\cong\Hom(P_i,P_k)$ for $i\notin\supp T$.  By noting that $\Hom(P_k,P')=0$ we see that $\pi_{\bar{T}}^c(\rho_k^k)=[P_k]-[P']=[P_k]-[P']^{*_P}$.  Since $k\notin\supp(\bar{T})$ we have $\Hom(\bar{P}_k,\bar{T})=0$, in particular $\langle\bar{P}_k,\bar{T}\rangle=0$.  Thus we see that $-\langle\rad\bar{P}_k,T_j\rangle=\langle S_k,T_j\rangle$ for $j\in\supp(\bar{T})$ and hence $\pi_{\bar{T}}(\lambda_k^k)=-[\rad\bar{P}_k]\in\cK(Q_{\bar{T}})$.

  Similarly we have the following short exact sequences defining $G$ and $I'$:
  \begin{center}
   \begin{tikzpicture}
    \matrix (m) [matrix of math nodes, row sep=1em, column sep=2.5em, text height=1.5ex, text depth=0.25ex]
     {0 & G & T_k^* & \bar{I}_k & 0\,,\\ 0 & \bar{I}_k & I_k & I' & 0\,.\\};
    \path[->]
     (m-1-1) edge (m-1-2)
     (m-1-2) edge (m-1-3)
     (m-1-3) edge (m-1-4)
     (m-1-4) edge (m-1-5)
     (m-2-1) edge (m-2-2)
     (m-2-2) edge (m-2-3)
     (m-2-3) edge (m-2-4)
     (m-2-4) edge (m-2-5);
   \end{tikzpicture}
  \end{center}
  Applying the functor $\Hom(-,I_j)$ we see that $\Hom(I_k,I_j)\cong\Hom(I',I_j)$ for all $j\notin\supp T$.  Recall that the inverse Nakayama functor $\nu^{-1}$ is an equivalence of categories from the full subcategory of $\Rep_\FF(Q,\bfd)$ consisting of injective objects to the full subcategory consisting of projective objects.  Thus we see that 
  \begin{align*}
   \Hom(P_k,P_j)
   &=\Hom(\nu^{-1}I_k,\nu^{-1}I_j)\\
   &\cong\Hom(\nu^{-1}I',\nu^{-1}I_j)=\Hom(\nu^{-1}I',P_j)
  \end{align*}
  and $\Hom(\nu^{-1}I',P_k)=\Hom(\nu^{-1}I',\nu^{-1}I_k)=0$.  Thus we see that $\pi_{\bar{T}}^c(\lambda_k^k)=[P_k]-[\nu^{-1}I']=[P_k]-{}^{*_P}[I']$.  Since $k\notin\supp(\bar{T})$ we have $\Hom(\bar{T},\bar{I}_k)=0$, in particular $\langle\bar{T},\bar{I}_k\rangle=0$.  Thus we see that $-\langle T_j,\bar{I}_k/S_k\rangle=\langle T_j,S_k\rangle$ for $j\in\supp(\bar{T})$ and hence $\pi_{\bar{T}}(\rho_k^k)=-[\bar{I}_k/S_k]\in\cK(Q_{\bar{T}})$.  To obtain the remaining duals $\pi_T^c(\rho_k^{T_k^*})$ and $\pi_T^c(\lambda_k^{T_k^*})$ we assume $i\notin\supp T$ and compute
  \begin{equation*}
   \langle\pi_T(\lambda_i^{T_k^*}),\pi_T(\rho_k^{T_k^*})\rangle=\langle\pi_T(\lambda_i^{T_k^*}),[T_k^*]-[G]\rangle=\langle S_i,[T_k^*]-[G]\rangle=\langle S_i,[I_k]-[I']\rangle=-\langle S_i,[I']\rangle.
  \end{equation*}
  On the other hand this is equal to $\ell_{ik}^{T_k^*}d_k=r_{ki}^{T_k^*}d_i$ and thus $\pi_T^c(\rho_k^{T_k^*})=-{}^{*_P}[I']$.  A similar computation with $\langle\pi_T(\lambda_k^{T_k^*}),\pi_T(\rho_i^{T_k^*})\rangle$ gives $-\langle [P'],S_i\rangle=r_{ik}^{T_k^*}d_k=\ell_{ki}^{T_k^*}d_i$ and $\pi_T^c(\lambda_k^{T_k^*})=-[P']^{*_P}$.
    
  For $i\in\supp\bar{T}$ we may compare $\pi_T(\lambda_i^{T_k^*})$ and $\pi_{\bar{T}}(\lambda_i^k)$ as follows:
  \begin{align*}
   &\langle\pi_T(\lambda_i^{T_k^*})-\pi_{\bar{T}}(\lambda_i^k),[T_j]\rangle=0 \text{ and }\\
   &\langle\pi_T(\lambda_i^{T_k^*})-\pi_{\bar{T}}(\lambda_i^k),[T_k^*]\rangle=-\langle\pi_{\bar{T}}(\lambda_i^k),[T_k^*]\rangle=-\langle\pi_{\bar{T}}(\lambda_i^k),[G]\rangle=-[-b_{ik}^{T_k^*}]_+d_i,
  \end{align*}
  so that $\pi_T(\lambda_i^{T_k^*})-\pi_{\bar{T}}(\lambda_i^k)=-[-b_{ik}^{T_k^*}]_+d_i/d_k\pi_T(\lambda_k^{T_k^*})$.  Similarly we may compare $\pi_T(\rho_i^{T_k^*})$ and $\pi_{\bar{T}}(\rho_i^k)$ as follows:
  \begin{align*}
   &\langle[T_j],\pi_T(\rho_i^{T_k^*})-\pi_{\bar{T}}(\rho_i^k)\rangle=0 \text{ and }\\
   &\langle[T_k^*],\pi_T(\rho_i^{T_k^*})-\pi_{\bar{T}}(\rho_i^k)\rangle=-\langle[T_k^*],\pi_{\bar{T}}(\rho_i^k)\rangle=-\langle[F],\pi_{\bar{T}}(\rho_i^k)\rangle=-[b_{ik}^{T_k^*}]_+d_i,
  \end{align*}
  so that $\pi_T(\rho_i^{T_k^*})-\pi_{\bar{T}}(\rho_i^k)=-[b_{ik}^{T_k^*}]_+d_i/d_k\pi_T(\rho_k^{T_k^*})$.  Now we suppose $i\notin\supp T$.  Then we may compare $\pi_T(\lambda_i^{T_k^*})$ and $\pi_{\bar{T}}(\lambda_i^k)$ as follows:
  \begin{align*}
   &\langle\pi_T(\lambda_i^{T_k^*})-\pi_{\bar{T}}(\lambda_i^k),[T_j]\rangle=0 \text{ and }\\
   &\langle\pi_T(\lambda_i^{T_k^*})-\pi_{\bar{T}}(\lambda_i^k),[T_k^*]\rangle=\langle S_i,[T_k^*]\rangle-\langle\pi_{\bar{T}}(\lambda_i^k),[G]\rangle=\langle S_i,[T_k^*]-[G]\rangle=\langle S_i,I_k-I'\rangle=-[-b_{ik}^{T_k^*}]_+d_i,
  \end{align*}
  so that $\pi_T(\lambda_i^{T_k^*})-\pi_{\bar{T}}(\lambda_i^k)=-[-b_{ik}^{T_k^*}]_+d_i/d_k\pi_T(\lambda_k^{T_k^*})$.  For $j\in\supp\bar{T}$ we consider the coefficient of $[T_j]$ in this identity to get
  \begin{align*}
   \ell_{ij}^{T_k^*}-\ell_{ij}^k&=-[-b_{ik}^{T_k^*}]_+d_i/d_k\ell_{kj}^{T_k^*}\\
   \ell_{ij}^{T_k^*}d_j-\ell_{ij}^kd_j&=-[-b_{ik}^{T_k^*}]_+d_i/d_k\ell_{kj}^{T_k^*}d_j\\
   r_{ji}^{T_k^*}d_i-r_{ji}^kd_i&=r_{ki}^{T_k^*}d_i/d_k\ell_{kj}^{T_k^*}d_j\\
   r_{ji}^{T_k^*}-r_{ji}^k&=\ell_{kj}^{T_k^*}d_j/d_kr_{ki}^{T_k^*},
  \end{align*}
  which we may recognize as giving the coefficient of $[P_i]$ in the identity $\pi_T^c(\rho_j^{T_k^*})-\pi_{\bar{T}}^c(\rho_j^k)=-[b_{jk}]_+d_j/d_k\pi_T^c(\rho_k^{T_k^*})$.  Similarly we may compare $\pi_T(\rho_i^{T_k^*})$ and $\pi_{\bar{T}}(\rho_i^k)$ as follows:
  \begin{align*}
   &\langle[T_j],\pi_T(\rho_i^{T_k^*})-\pi_{\bar{T}}(\rho_i^k)\rangle=0 \text{ and }\\
   &\langle[T_k^*],\pi_T(\rho_i^{T_k^*})-\pi_{\bar{T}}(\rho_i^k)\rangle=\langle[T_k^*],S_i\rangle-\langle[F],\pi_{\bar{T}}(\rho_i^k)\rangle=\langle[T_k^*]-[F],S_i\rangle=\langle P_k-P',S_i\rangle=-[b_{ik}^{T_k^*}]_+d_i,
  \end{align*}
  so that $\pi_T(\rho_i^{T_k^*})-\pi_{\bar{T}}(\rho_i^k)=-[b_{ik}^{T_k^*}]_+d_i/d_k\pi_T(\rho_k^{T_k^*})$.  For $j\in\supp\bar{T}$ we again consider the coefficient of $[T_j]$ in this identity to get
  \begin{align*}
   r_{ij}^{T_k^*}-r_{ij}^k&=-[b_{ik}^{T_k^*}]_+d_i/d_kr_{kj}^{T_k^*}\\
   r_{ij}^{T_k^*}d_j-r_{ij}^kd_j&=-[b_{ik}^{T_k^*}]_+d_i/d_kr_{kj}^{T_k^*}d_j\\
   \ell_{ji}^{T_k^*}d_i-\ell_{ji}^kd_i&=\ell_{ki}^{T_k^*}d_i/d_kr_{kj}^{T_k^*}d_j\\
   \ell_{ji}^{T_k^*}-\ell_{ji}^k&=r_{kj}^{T_k^*}d_j/d_k\ell_{ki}^{T_k^*},
  \end{align*}
  which we may recognize as giving the coefficient of $[P_i]$ in the identity $\pi_T^c(\lambda_j^{T_k^*})-\pi_{\bar{T}}^c(\lambda_j^k)=-[-b_{jk}]_+d_j/d_k\pi_T^c(\lambda_k^{T_k^*})$.  We may also compare $\pi_T^c(\lambda_i^{T_k^*})$ and $\pi_{\bar{T}}^c(\lambda_i^k)$ as follows:
  \begin{align*}
   &\langle\pi_T^c(\lambda_i^{T_k^*})-\pi_{\bar{T}}^c(\lambda_i^k),[P_j]\rangle=0 \text{ and }\\
   &\langle\pi_T^c(\lambda_i^{T_k^*})-\pi_{\bar{T}}^c(\lambda_i^k),[P_k]\rangle=\langle\pi_T^c(\lambda_i^{T_k^*}),[P_k]\rangle=\langle\pi_T^c(\lambda_i^{T_k^*}),[P']\rangle=[b_{ik}^{T_k^*}]_+d_i,
  \end{align*}
  so that $\pi_T^c(\lambda_i^{T_k^*})-\pi_{\bar{T}}^c(\lambda_i^k)=[b_{ik}^{T_k^*}]_+d_i/d_k\pi_{\bar{T}}^c(\lambda_k^k)$.  Finally we may compare $\pi_T^c(\rho_i^{T_k^*})$ and $\pi_{\bar{T}}^c(\rho_i^k)$ as follows:
  \begin{align*}
   &\langle[P_j],\pi_T^c(\rho_i^{T_k^*})-\pi_{\bar{T}}^c(\rho_i^k)\rangle=0 \text{ and }\\
   &\langle[P_k],\pi_T^c(\rho_i^{T_k^*})-\pi_{\bar{T}}^c(\rho_i^k)\rangle=\langle[P_k],\pi_T^c(\rho_i^{T_k^*})\rangle=\langle[\nu^{-1}I'],\pi_T^c(\rho_i^{T_k^*})\rangle=[-b_{ik}^{T_k^*}]_+d_i,
  \end{align*}
  so that $\pi_T^c(\rho_i^{T_k^*})-\pi_{\bar{T}}^c(\rho_i^k)=[-b_{ik}^{T_k^*}]_+d_i/d_k\pi_{\bar{T}}^c(\rho_k^k)$.  
 \end{proof}
 \noindent The following Proposition is based upon \cite[Proposition 27]{hub1}.
 \begin{proposition}\label{prop:27}
  Thinking of the $k^{th}$ column $\bfb^k$ of $B_T$ as representing an element of $\cK(Q)$ written in the $\bar{T}\oplus T_k^*$-basis, we may write
  \begin{equation*}
   \bfb^k=\rho_k^{T_k^*}-\lambda_k^{T_k^*}.
  \end{equation*}
  Similarly, thinking of the $k^{th}$ column $\bfb^k$ of $B_{\bar{T}}$ as representing an element of $\cK(Q)$ written in the $\bar{T}$-basis, we may write
  \begin{equation*}
   \bfb^k=\rho_k^k-\lambda_k^k.
  \end{equation*}
 \end{proposition}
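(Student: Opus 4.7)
The plan is to mirror the structure of the preceding proposition (for $k\in\supp T$), substituting Lemma~\ref{lem:28} in place of Lemma~\ref{lem:24} throughout. In both cases the class $\rho_k^\bullet-\lambda_k^\bullet$ splits naturally along the decomposition $\cK(Q)=\cK(Q_T)\oplus\spa\{[P_i]:i\notin\supp T\}$ (and analogously for $\bar{T}$), so Lemma~\ref{lem:28} already supplies the two projections of each dual and my task reduces to summing them and simplifying.

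For the first identity, I would first assemble the formulas from Lemma~\ref{lem:28} into
\[\lambda_k^{T_k^*}=[T_k^*]-[F]-[P']^{*_P},\qquad \rho_k^{T_k^*}=[T_k^*]-[G]-{}^{*_P}[I'],\]
and then subtract. The $[T_k^*]$ terms cancel automatically, and the result matches the definition
\[\bfb^k=[F]+[P']^{*_P}-[G]-{}^{*_P}[I']\]
of the $k^{th}$ column of $B_T$ from Section~\ref{sec:tilt_matrix}. This half is essentially bookkeeping.

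For the second identity I would likewise assemble
\[\lambda_k^k=[P_k]-{}^{*_P}[I']-[\rad\bar{P}_k],\qquad \rho_k^k=[P_k]-[P']^{*_P}-[\bar{I}_k/S_k],\]
and subtract to obtain
\[\rho_k^k-\lambda_k^k=[\rad\bar{P}_k]-[\bar{I}_k/S_k]+{}^{*_P}[I']-[P']^{*_P}.\]
Since the $k^{th}$ column of $B_{\bar{T}}$ is defined as $-\bfb^k=-[F]-[P']^{*_P}+[G]+{}^{*_P}[I']$, matching this to $\rho_k^k-\lambda_k^k$ reduces to verifying the single equality
\[[\rad\bar{P}_k]+[F]=[G]+[\bar{I}_k/S_k].\]
This is the one step that is not pure cancellation, and I would expect it to be the main (though still mild) obstacle: it drops out of the two defining short exact sequences $0\to\bar{P}_k\to T_k^*\to F\to 0$ and $0\to G\to T_k^*\to\bar{I}_k\to 0$ by equating the two resulting expressions for $[T_k^*]$ in $\cK(Q)$ and cancelling the common summand $[S_k]$. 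Once this identity is in hand, the proposition follows.
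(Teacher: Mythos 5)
Your proposal is correct and follows essentially the same route as the paper: assemble the full duals from the two projections supplied by Lemma~\ref{lem:28}, subtract, and for the $B_{\bar{T}}$ case use the short exact sequences $0\to\bar{P}_k\to T_k^*\to F\to 0$ and $0\to G\to T_k^*\to\bar{I}_k\to 0$ together with $[\rad\bar{P}_k]=[\bar{P}_k]-[S_k]$ and $[\bar{I}_k/S_k]=[\bar{I}_k]-[S_k]$. The only difference is organizational: you isolate the identity $[\rad\bar{P}_k]+[F]=[G]+[\bar{I}_k/S_k]$ before concluding, while the paper substitutes these class relations directly in a chain of equalities; the content is identical.
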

 \begin{proof}
  According to Lemma~\ref{lem:28} we may write 
  \begin{align*}
   \rho_k^{T_k}-\lambda_k^{T_k}
   &=[T_k^*]-[G]-{}^{*_P}[I']-[T_k^*]+[F]+[P']^{*_P}\\
   &=[F]+[P']^{*_P}-[G]-{}^{*_P}[I'],
  \end{align*}
  but this is exactly the $k^{th}$ column of $B_T$.  Similarly we have
  \begin{align*}
   \rho_k^k-\lambda_k^k
   &=-[\bar{I}_k/S_k]+[P_k]-[P']^{*_P}+[\rad\bar{P}_k]-[P_k]+{}^{*_P}[I']\\
   &=-[\bar{I}_k]+[S_k]-[P']^{*_P}+[\bar{P}_k]-[S_k]+{}^{*_P}[I']\\
   &=-[T_k^*]+[G]-[P']^{*_P}+[T_k^*]-[F]+{}^{*_P}[I']\\
   &=[G]+{}^{*_P}[I']-[F]-[P']^{*_P},
  \end{align*}
  but this is exactly the $k^{th}$ column of $B_{\bar{T}}$.
 \end{proof}

 We are finally ready to show that mutation of tilting pairs corresponds to mutation of exchange matrices.
 \begin{theorem}\label{th:tilt_matrix_mut}
  Suppose $\mu_k(T)=T'$.  Then $B_T$ and $B_{T'}$ are related by Fomin-Zelevinsky matrix mutation in direction $k$.
 \end{theorem}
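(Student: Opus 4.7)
The plan is to compare entries of $B_T$ and $B_{T'}$ by expressing columns as $\bfb^j = \rho_j - \lambda_j$ and invoking the change-of-basis formulas for duals provided by Lemmas~\ref{lem:24} and~\ref{lem:28}. It suffices to handle two cases: (i) $k \in \supp T = \supp\bar{T}$, with $T' = T_k^* \oplus \bar{T}$, and (ii) $k \in \supp T$ but $k \notin \supp\bar{T}$, with $T' = \bar{T}$. The case $k \notin \supp T$ is the inverse of (ii) under both $\mu_k$ (involutive for local tilting representations by construction) and the Fomin--Zelevinsky rule (involutive by direct inspection of~\eqref{eq:EF}), so it reduces to case (ii).

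The first step is to extend the propositions shown for the $k^{th}$ column to every column: the arguments producing $\bfb^k = \rho_k - \lambda_k$ in each setting never use that $k$ is the distinguished direction, so the same reasoning with $j$ in place of $k$ gives $\bfb^j = \rho_j - \lambda_j$ for all $j \in Q_0$. Combining with Lemma~\ref{lem:24} in case (i) yields the $\cK(Q)$-identity
\[
\bfb^j_{T'} = \bfb^j_T + [-b_{jk}^T]_+ (d_j/d_k)\, \bfb^k_T, \qquad j \neq k,
\]
while $\bfb^k_{T'} = -\bfb^k_T$ follows directly from the propositions. To extract matrix entries I would expand both sides in the $T'$-basis, which differs from the $T$-basis only in slot $k$. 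The exact sequence~\eqref{eq:E_seq} gives $[T_k] = [E] - [T_k^*]$, and the decomposition $\bfb^k_T = [A]+[D]+{}^{*_P}[I]-[E]$ combined with Lemma~\ref{lem:h1} identifies the multiplicity of $[T_i]$ in $[E]$ as exactly $[-b_{ik}^T]_+$. Substituting produces, for $i \in \supp T$ with $i \neq k$,
\[
b'_{ij} = b_{ij} + [-b_{jk}]_+ (d_j/d_k)\, b_{ik} + b_{kj}[-b_{ik}]_+,
\]
together with $b'_{kj} = -b_{kj}$. Agreement with the Fomin--Zelevinsky rule~\eqref{eq:exch_mutation} then follows by invoking (a) skew-symmetrizability of the principal part of $B_T$ to replace $[-b_{jk}]_+(d_j/d_k)$ by $[b_{kj}]_+$, and (b) the elementary identity $[x]_+ y + x[-y]_+ = [y]_+ x + y[-x]_+$. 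Case (ii) proceeds in parallel using Lemma~\ref{lem:28} together with the sequences defining $F$, $G$, $P'$, $I'$ and Lemma~\ref{lem:h2}.

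The main obstacle is matching the Fomin--Zelevinsky formula in the rows indexed by $i \notin \supp T$, where no basis change takes place in slot $k$ and the calculation naively produces only the term $[b_{kj}]_+ b_{ik}$ with no $b_{kj}[-b_{ik}]_+$ companion. Agreement with FZ therefore requires $b_{ik}$ to have a definite sign, which must be read off from the explicit description of $\bfb^k_T$. In case (i) this works out because the $[P_i]$-coefficient of $\bfb^k_T$ reduces to $\langle \alpha_i^\vee, [I]\rangle \geq 0$ via the identity $\dim_\FF \Hom(S_i, I_j) = d_i \delta_{ij}$ applied to the injective summands of $I$, so $[-b_{ik}]_+ = 0$ and the missing term vanishes. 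An analogous sign-tracking argument in case (ii), distinguishing the $[P']^{*_P}$ and ${}^{*_P}[I']$ contributions to $\bfb^k_T$ and invoking the disjoint-support assertion of Proposition~\ref{prop:approx} together with Lemma~\ref{lem:h2}, completes the proof.
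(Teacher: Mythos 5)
Your Case (i) is essentially the paper's Case 1: the same column identity $\bfb^j_{T'}=\bfb^j_T+[-b_{jk}]_+(d_j/d_k)\,\bfb^k_T$ from Lemma~\ref{lem:24}, the same basis change in slot $k$ via \eqref{eq:E_seq} with the coefficients of $[E]$ identified as $[-b_{ik}]_+$ through Lemma~\ref{lem:h1}, and the same (implicitly needed) observation that the $[P_i]$-coefficients of $\bfb^k$, being $\langle\alpha_i^\vee,[I]\rangle\ge 0$, make the row correction vanish outside $\supp T$; the paper packages this as $B_{T'}=EB_TF$ with $E,F$ from \eqref{eq:EF}, while you match entries by hand, which is the same content. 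Reducing the case $k\notin\supp T$ to the ``removal'' case by involutivity of both mutations is also fine and is effectively what the paper does, since its Case 2 verifies the mutation relation between $B_{\bar T}$ and $B_{T_k^*\oplus\bar T}$.

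The gap is the claim that Case (ii) ``proceeds in parallel.'' It does not: in Lemma~\ref{lem:28} the left and right duals transform by \emph{different} coefficients, $\pi(\lambda_i^k)=\pi(\lambda_i^{T_k^*})+[-b_{ik}]_+(d_i/d_k)\pi(\lambda_k^{T_k^*})$ versus $\pi(\rho_i^k)=\pi(\rho_i^{T_k^*})+[b_{ik}]_+(d_i/d_k)\pi(\rho_k^{T_k^*})$, and for $i\notin\supp T$ the $\pi^c$-components are corrected by $\pi_{\bar T}^c(\lambda_k^k)$ and $\pi_{\bar T}^c(\rho_k^k)$ rather than by the duals in the $T$-basis. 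Consequently there is no single $\cK(Q)$-identity of the form $\bfb^j_{T'}=\bfb^j_T+c_j\,\bfb^k_T$ to which a basis change can be applied; subtracting the two relations leaves an extra term proportional to $b_{kj}\,\rho_k^{T_k^*}$ (equivalently $b_{kj}\,\lambda_k^{T_k^*}$), and it is exactly this $\lambda$/$\rho$ asymmetry that produces \emph{both} Fomin--Zelevinsky correction terms $[b_{ik}]_+b_{kj}$ and $b_{ik}[-b_{kj}]_+$ -- including in rows $i\notin\supp T$, where your Case (i) picture (trivial row operation plus a definite sign of $b_{ik}$) does not hold, since there $b_{ik}=r_{ki}-\ell_{ki}$ receives contributions of either sign from $-{}^{*_P}[I']$ and $-[P']^{*_P}$. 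Moreover, expanding in the $T'$-basis is genuinely harder here, because slot $k$ changes from $[T_k^*]$ to $[P_k]$ via $[T_k^*]=[P_k]-[P']+[F]$, and $[P']$, $[I']$ would themselves have to be re-expanded through the mixed basis. The disjoint-support facts you cite (Proposition~\ref{prop:approx}, Lemma~\ref{lem:h2}) are indeed the right input -- they give $r_{ki}=-[-b_{ik}]_+$ and $\ell_{ki}=-[b_{ik}]_+$ -- but to finish one must run the entry-by-entry computation with the full set of relations of Lemma~\ref{lem:28} and the transposition identities $\ell_{ij}d_j=r_{ji}d_i$, split by the position of $i$ and $j$, as the paper does in its three sub-cases; the column-then-row factorization you describe does not carry over to this case as stated.
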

 \begin{proof}
  Note that we have shown for any vertex $i\in Q_0$ we have $\bfb^i=\rho_i^{T_k}-\lambda_i^{T_k}$ for $k\in\supp \bar{T}$ and $\bfb^i=\rho_i^k-\lambda_i^k$ when $k\notin\supp(T)$.  In particular, this implies that the matrices $B_T$ and $B_{T'}$ are skew-symmetrizable, i.e. $DB_T$ and $DB_{T'}$ are skew-symmetric.  We will have two cases to consider.  

  \emph{Case 1:} Suppose we have $k\in\supp(T)=\supp(\bar{T})$.  Write $T_k^*\not\cong T_k$ for the unique compliment of $\bar{T}$.  We will show that $B_{\bar{T}\oplus T_k^*}=EB_TF$ where the matrices $E$ and $F$ are given by \eqref{eq:EF}.  We will consider the $j^{th}$ column $\bfb^j=B_T^j$ of $B_T$ as an element of $\cK(Q)$ via $\bfb^j=\sum\limits_{i\in\supp T} b_{ij}^{T_k}[T_i]+\sum\limits_{i\notin\supp T} b_{ij}^{T_k}[P_i]$.  First note that when $j\notin\supp T$ we have $b_{jk}^{T_k}\ge0$ and so
  \begin{align*}
   (B_TF)^j
   &=\bfb^j+[b_{kj}^{T_k}]_+\bfb^k\\
   &=\bfb^j+[-b_{jk}^{T_k}]_+d_j/d_k\bfb^k\\
   &=\rho_j^{T_k}-\lambda_j^{T_k}\\
   &=\rho_j^{T_k^*}-\lambda_j^{T_k^*}\\
   &=B_{\bar{T}\oplus T_k^*}^j.
  \end{align*}
  Then since $\bfb_-^k:=\sum\limits_{i\in\supp T}[-b_{ik}^{T_k}]_+[T_i]+\sum\limits_{i\notin\supp T}[-b_{ik}^{T_k}]_+[P_i]$ is $[E]$ the short exact sequence 
  \begin{center}
   \begin{tikzpicture}
    \matrix (m) [matrix of math nodes, row sep=3em, column sep=2.5em, text height=1.5ex, text depth=0.25ex]
     {0 & T_k^* & E & T_k & 0\\};
    \path[->]
     (m-1-1) edge (m-1-2)
     (m-1-2) edge (m-1-3)
     (m-1-3) edge (m-1-4)
     (m-1-4) edge (m-1-5);
   \end{tikzpicture}
  \end{center}
  implies that the change from the $\bar{T}\oplus T_k$-basis to the $\bar{T}\oplus T_k^*$-basis is given by left multiplication by the matrix $E$ from equation~\eqref{eq:EF}.  Hence the $j^{th}$ columns of $B_{\bar{T}\oplus T_k}$ and $B_{\bar{T}\oplus T_k^*}$ are related by the Fomin-Zelevinsky matrix mutation.  Now we suppose $j\in\supp T$ and make the following similar computation:
  \begin{align*}
   (B_TF)^j
   &=\bfb^j+[-b_{jk}^{T_k}]_+d_j/d_k\bfb^k\\
   &=\rho_j^{T_k}-\lambda_j^{T_k}+[-b_{jk}^{T_k}]_+d_j/d_k(\rho_k^{T_k}-\lambda_k^{T_k})\\
   &=\rho_j^{T_k^*}-\lambda_j^{T_k^*}\\
   &=B_{\bar{T}\oplus T_k^*}^j.
  \end{align*}
  Again note that the change from the $\bar{T}\oplus T_k$-basis to the $\bar{T}\oplus T_k^*$-basis is given by left multiplication by the matrix $E$ from equation~\eqref{eq:EF}.  This completes the proof in the case $k\in\supp(T)=\supp(\bar{T})$.

  \emph{Case 2:} Suppose $k\notin\supp(T)$.  Then $T$ has a unique complement $T_k^*$ such that $T_k^*\oplus T$ is a local tilting representation and $\supp(T_k^*\oplus T)=\supp(T)\cup\{k\}$.  Note that by the definition of $B_T$ and $B_{\bar{T}}$ their $k^{th}$ columns are related by the Fomin-Zelevinsky matrix mutation.  Thus our goal is to show for $i,j\ne k$ that $b_{ij}^k=b_{ij}^{T_k^*}+\delta_{ij}^{T_k^*}$ for $\delta_{ij}^{T_k^*}=[b_{ik}^{T_k^*}]_+b_{kj}^{T_k^*}+b_{ik}^{T_k^*}[-b_{kj}^{T_k^*}]_+$.  We have three sub-cases to consider depending on the positions of $i$ and $j$.
  \begin{enumerate}
   \item Suppose $j\in\supp\bar{T}$ and $i\ne k$.  Then according to Lemma~\ref{lem:28} we have
    \begin{align*}
     b_{ij}^k
     &= r_{ji}^k-\ell_{ji}^k\\
     &= r_{ji}^{T_k^*}+[b_{jk}^{T_k^*}]_+d_j/d_kr_{ki}^{T_k^*}-\ell_{ji}^{T_k^*}-[-b_{jk}^{T_k^*}]_+d_j/d_k\ell_{ki}^{T_k^*}\\
     &= b_{ij}^{T_k^*}+[-b_{kj}^{T_k^*}]_+r_{ki}^{T_k^*}-[b_{kj}^{T_k^*}]_+\ell_{ki}^{T_k^*}\\
     &= b_{ij}^{T_k^*}-[-b_{kj}^{T_k^*}]_+[-b_{ik}^{T_k^*}]_++[b_{kj}^{T_k^*}]_+[b_{ik}^{T_k^*}]_+\\
     &= b_{ij}^{T_k^*}+[b_{ik}^{T_k^*}]_+[b_{kj}^{T_k^*}]_+-[b_{ik}^{T_k^*}]_+[-b_{kj}^{T_k^*}]_++[b_{ik}^{T_k^*}]_+[-b_{kj}^{T_k^*}]_+-[-b_{ik}^{T_k^*}]_+[-b_{kj}^{T_k^*}]_+\\
     &= b_{ij}^{T_k^*}+[b_{ik}^{T_k^*}]_+([b_{kj}^{T_k^*}]_+-[-b_{kj}^{T_k^*}]_+)+([b_{ik}^{T_k^*}]_+-[-b_{ik}^{T_k^*}]_+)[-b_{kj}^{T_k^*}]_+\\
     &= b_{ij}^{T_k^*}+[b_{ik}^{T_k^*}]_+b_{kj}^{T_k^*}+b_{ik}^{T_k^*}[-b_{kj}^{T_k^*}]_+.
    \end{align*}
  
   \item Suppose $j\notin\supp T$ and $i\in\supp\bar{T}$.  Following Lemma~\ref{lem:28} we have
    \begin{align*}
     b_{ij}^k=r_{ji}^k-\ell_{ji}^k=r_{ji}^{T_k^*}+[b_{jk}^{T_k^*}]_+d_j/d_kr_{ki}^{T_k^*}-\ell_{ji}^{T_k^*}-[-b_{jk}^{T_k^*}]_+d_j/d_k\ell_{ki}^{T_k^*}
    \end{align*}
    and the proof proceeds as in case (1).
  
   \item Finally suppose $i,j\notin\supp T$.  Then Lemma~\ref{lem:28} gives
    \begin{align*}
     b_{ij}^k
     &= r_{ji}^k-\ell_{ji}^k\\
     &= r_{ji}^{T_k^*}-[-b_{jk}^{T_k^*}]_+d_j/d_kr_{ki}^k-\ell_{ji}^{T_k^*}+[b_{jk}^{T_k^*}]_+d_j/d_k\ell_{ki}^k\\
     &= b_{ij}^{T_k^*}-r_{ki}^k[b_{kj}^{T_k^*}]_++\ell_{ki}^k[-b_{kj}^{T_k^*}]_+\\
     &= b_{ij}^{T_k^*}+[b_{ik}^{T_k^*}]_+[b_{kj}^{T_k^*}]_+-[-b_{ik}^{T_k^*}]_+[-b_{kj}^{T_k^*}]_+\\
     &= b_{ij}^{T_k^*}+[b_{ik}^{T_k^*}]_+[b_{kj}^{T_k^*}]_+-[b_{ik}^{T_k^*}]_+[-b_{kj}^{T_k^*}]_++[b_{ik}^{T_k^*}]_+[-b_{kj}^{T_k^*}]_+-[-b_{ik}^{T_k^*}]_+[-b_{kj}^{T_k^*}]_+\\
     &= b_{ij}^{T_k^*}+[b_{ik}^{T_k^*}]_+([b_{kj}^{T_k^*}]_+-[-b_{kj}^{T_k^*}]_+)+([b_{ik}^{T_k^*}]_+-[-b_{ik}^{T_k^*}]_+)[-b_{kj}^{T_k^*}]_+\\
     &= b_{ij}^{T_k^*}+[b_{ik}^{T_k^*}]_+b_{kj}^{T_k^*}+b_{ik}^{T_k^*}[-b_{kj}^{T_k^*}]_+.
    \end{align*}
  \end{enumerate}
  This completes the proof.
 \end{proof}

 \section{Quantum Seeds Associated to Local Tilting Representations}\label{sec:quantum_seed}  In this section we assign a quantum seed $\Sigma_T=(\bfX_T,B_T)$ to each local tilting representation $T$.  Combining with Theorem~\ref{th:tilt_matrix_mut} we will complete the proof that all cluster variables are given by the quantum cluster character applied to an exceptional representation of $(Q,\bfd)$.

 Recall that starting with a principal compatible pair $(\tilde{B},\Lambda)$ we construct the quantum cluster algebra $\cA_{|\FF|}(\tilde{B},\Lambda)$ with initial cluster $\{X_1,\ldots,X_m\}$ and a valued quiver $(Q,\bfd)$ with principal frozen vertices.  For a local tilting representation $T$ we define the cluster $\bfX_T=\{X'_1,\ldots,X'_m\}$ as follows:
 \[X'_i=\begin{cases}X_i & \text{ if $i\notin\supp T$,}\\ X_{T_i} & \text{ if $i\in\supp T$.}\end{cases}\]
 Recall that the exchange matrix $B_T$ was defined in Section~\ref{sec:tilt_matrix}.  We will only consider those local tilting representations which may be obtained from the trivial local tilting representation $T_0=0$ by a sequence of mutations.  

 \begin{proposition}
  The matrix $B_{T_0}$ is the initial exchange matrix $\tilde{B}$.
 \end{proposition}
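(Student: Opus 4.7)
The plan is to unpack the definition of $B_{T_0}$ from Section~\ref{sec:tilt_matrix} in the case $T = T_0 = 0$ and compare the resulting expression column-by-column with the prescribed form $\tilde{B} = \left(\begin{array}{c} B_Q \\ I \end{array}\right)$.

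Since $\supp T_0 = \emptyset$, for every mutable direction $k \in \{1, \ldots, n\}$ we fall into the second branch of the construction: with $\bar{T} = T_0$ we have $k \notin \supp \bar{T}$, and the $k$-th column of $B_{T_0} = B_{\bar{T}}$ is the negative of the $k$-th column of $B_T$ where $T = T_k^* \oplus \bar{T}$. The unique complement $T_k^*$ must be an indecomposable representation whose support is the single vertex $k$; since the full subquiver on $\{k\}$ has no arrows, this forces $T_k^* = S_k$. The unique (up to scalar) morphism $P_k \to S_k$ is the canonical surjection onto the top, with kernel $\rad P_k$, and the unique morphism $S_k \to I_k$ is the inclusion of the socle, with cokernel $I_k/S_k$. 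So in the notation of Section~\ref{sec:tilt_matrix} we have $F = G = 0$, $P' = \rad P_k$, and $I' = I_k/S_k$, giving
\[\bfb^k = -[\rad P_k]^{*_P} + {}^{*_P}[I_k/S_k].\]

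Next I will compute the coefficient of each basis vector $[P_j]$ in $\bfb^k$ using the defining formulas of $\bfe^{*_P}$ and ${}^{*_P}\bfe$. Writing $[\rad P_k] = [P_k] - \alpha_k$ and $[I_k/S_k] = [I_k] - \alpha_k$, the standard identities $\langle [P_k], \alpha_j^\vee \rangle = \delta_{jk} = \langle \alpha_j^\vee, [I_k] \rangle$ make the $[P_k]$ and $[I_k]$ contributions cancel the $\delta_{jk}$ terms, so the coefficient reduces to $\langle \alpha_k, \alpha_j^\vee \rangle - \langle \alpha_j^\vee, \alpha_k \rangle$. Expanding via formula~\eqref{eq:euler} and the skew-symmetrizability identity $d_k b_{kj} = -d_j b_{jk}$, this pair evaluates to $-[-b_{jk}]_+ + [b_{jk}]_+ = b_{jk}$ for $j \ne k$, matching the $(j,k)$-entry of $B_Q$; for $j = k$ both terms equal $1$ and the coefficient is $0$, matching the vanishing diagonal of $B_Q$.

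The only remaining case is $j = n+i$ with $i \in \{1, \ldots, n\}$, the principal frozen vertices; here the auxiliary arrow $n+i \to i$ in $\tilde{Q}$ makes $\langle \alpha_k, \alpha_{n+i}^\vee \rangle = 0$ while $\langle \alpha_{n+i}^\vee, \alpha_k \rangle = -\delta_{ik}$, so the coefficient of $[P_{n+i}]$ is $\delta_{ik}$, matching the identity block of $\tilde{B}$. I do not anticipate a serious obstacle: the entire argument is a short Euler-form computation, and the only real care needed is in tracking the sign flip $B_{\bar{T}} = -B_{T_k^* \oplus \bar{T}}$ built into the definition and in distinguishing the two dual operations $\bfe^{*_P}$ and ${}^{*_P}\bfe$.
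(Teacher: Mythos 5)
Your proposal is correct. It takes a somewhat different route from the paper's own proof: the paper invokes the dual-element machinery of Section~\ref{sec:exch_mut}, observing that $\langle P_i,S_j\rangle=\delta_{ij}d_i$ forces $\rho_j=S_j$, reading off the coefficients $r_{ji}=-[-b_{ij}]_+$ and $\ell_{ji}=-[b_{ij}]_+$ from $\langle S_j,S_i\rangle$ together with the relation $LD=(DR)^t$, and then applying Proposition~\ref{prop:27} ($\bfb^k=\rho_k^k-\lambda_k^k$) to get $b_{ij}^{T_0}=r_{ji}-\ell_{ji}=b_{ij}$ uniformly in all rows of $\tilde{B}$. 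You instead bypass the $\lambda,\rho$ formalism and unpack the raw definition of the $k$-th column from Section~\ref{sec:tilt_matrix}: identifying the unique complement of the zero representation as $T_k^*=S_k$, hence $F=G=0$, $P'=\rad P_k$, $I'=I_k/S_k$, and then evaluating $-[\rad P_k]^{*_P}+{}^{*_P}[I_k/S_k]$ coefficientwise via $\langle P_k,\alpha_j^\vee\rangle=\delta_{jk}=\langle\alpha_j^\vee,I_k\rangle$ and the Euler-form formula~\eqref{eq:euler}, which yields $\langle\alpha_k,\alpha_j^\vee\rangle-\langle\alpha_j^\vee,\alpha_k\rangle=b_{jk}$. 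The two computations rest on the same elementary Euler-form identities, but yours is more self-contained (it needs only Lemma~\ref{lem:morphisms} and the column definition, not Lemma~\ref{lem:28} or Proposition~\ref{prop:27}) and makes the frozen identity block of $\tilde{B}$ explicit via the arrows $n+i\to i$, whereas the paper's argument is shorter because it reuses machinery already established and treats mutable and frozen rows in one stroke; your case split is harmless since the computation is identical in both cases.
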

 \begin{proof}
  Notice that $\langle P_i,S_j\rangle=\delta_{ij}d_i$ so that $\rho_j=S_j$.  On the other hand we may write
  \begin{equation*}
   r_{ji}d_i=\langle\rho_j,S_i\rangle=\langle S_j,S_i\rangle=-[d_jb_{ji}]_+=-[-d_ib_{ij}]_+,
  \end{equation*}
  so that $r_{ji}=-[-b_{ij}]_+$ and $\ell_{ji}=r_{ij}d_j/d_i=-[b_{ij}]_+$.  Following Proposition~\ref{prop:27} we may compute the $ij$-entry of $B_{T_0}$ as $b_{ij}^{T_0}=r_{ji}-\ell_{ji}=[b_{ij}]_+-[-b_{ij}]_+=b_{ij}$.
 \end{proof}

 Recall that the cluster $\bfX_T$ should consist of quasi-commuting elements.  In Section~\ref{sec:comm}. we have already computed the commutation for the cluster $\bfX_T$ and thus we have defined the commutation matrix $\Lambda_T$ intrinsically in terms of the local tilting representation $T$.  To give $\Lambda_T$ explicitly, we write $I_i$ for the injective hull of the simple valued representation $S_i$, and write $\bfi_i$ for its dimension vector.  Also we will write $\bft_i$ for the dimension vector of the summand $T_i$ of $T$.
 \begin{proposition}
  Let $T$ be a local tilting representation.  Then $\Lambda_T=(\lambda'_{ij})$ is given by
  \[\lambda'_{ij}=\begin{cases}\Lambda({}^*\bfi_i,{}^*\bfi_j) & \text{ if $i,j\notin\supp T$,}\\-\Lambda({}^*\bfi_i,{}^*\bft_j) & \text{ if $i\notin\supp T$ and $j\in\supp T$,}\\-\Lambda({}^*\bft_i,{}^*\bfi_j) & \text{ if $i\in\supp T$ and $j\notin\supp T$,}\\ \Lambda({}^*\bft_i,{}^*\bft_j) & \text{ if $i,j\in\supp T$.}\\\end{cases}\]
 \end{proposition}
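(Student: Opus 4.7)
The plan is to verify the formula directly by computing the quasi-commutation of each pair of elements in the cluster $\bfX_T$ using the three commutation results from Section~\ref{sec:comm}, splitting into four cases according to membership of $i$ and $j$ in $\supp T$.

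First I would record the key identification $X_i = X^{{}^*\bfi_i}$, valid for every vertex of $\tilde{Q}$: since the socle of each injective hull $I_i$ equals $S_i$, Lemma~\ref{lem:identities}.\eqref{id5} gives ${}^*\bfi_i = [\soc I_i] = \alpha_i$, hence $X^{{}^*\bfi_i} = X_i$. (For a frozen vertex $n+k$ this is trivial, since $I_{n+k} = S_{n+k}$.) With this in hand, every entry of $\bfX_T$ has an expression either of the form $X_{T_i}$ or of the form $X^{{}^*\bfi_i}$, both of which are exactly the forms controlled by the results of Section~\ref{sec:comm}.

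The case analysis then proceeds as follows. For $i,j\notin\supp T$, Remark~\ref{rem:init_cluster_comm} immediately yields $\lambda'_{ij} = \Lambda({}^*\bfi_i, {}^*\bfi_j)$. For $i,j\in\supp T$, rigidity of the local tilting representation $T$ gives $\Ext^1(T_i,T_j)=\Ext^1(T_j,T_i)=0$, so Theorem~\ref{th:qcc-comm} applies and produces $X_{T_i} X_{T_j} = q^{\Lambda({}^*\bft_i,{}^*\bft_j)} X_{T_j} X_{T_i}$. For $i\notin\supp T$ and $j\in\supp T$, I would check that $\supp(\soc I_i) = \{i\}$ is disjoint from $\supp T_j \subseteq \supp T$, so the hypothesis of Theorem~\ref{th:init-comm} is satisfied; that theorem gives $X_{T_j} X^{{}^*\bfi_i} = q^{-\Lambda({}^*\bft_j, {}^*\bfi_i)} X^{{}^*\bfi_i} X_{T_j}$, which after using skew-symmetry of $\Lambda$ rearranges to $\lambda'_{ij} = -\Lambda({}^*\bfi_i,{}^*\bft_j)$. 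The final case $i\in\supp T$, $j\notin\supp T$ is obtained from the previous by skew-symmetry of $\Lambda_T$.

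The substance of the argument lies entirely in the three commutation results already proved, and the verification of their hypotheses uses only the definition of a local tilting representation (control of supports, rigidity of summands) together with $\soc I_i = S_i$. Thus there is no serious obstacle; the main thing to monitor is the bookkeeping of signs when rearranging the two forms of commutation supplied by Theorem~\ref{th:init-comm} into the desired convention $X'_i X'_j = q^{\lambda'_{ij}} X'_j X'_i$, and correctly reading off $\lambda'_{ji} = -\lambda'_{ij}$ in the last case.
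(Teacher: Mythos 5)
Your proposal is correct and follows exactly the route the paper intends: the paper gives no separate argument for this proposition but points to Section~\ref{sec:comm}, and your case analysis via Remark~\ref{rem:init_cluster_comm}, Theorem~\ref{th:qcc-comm}, and Theorem~\ref{th:init-comm}, together with the identification $X_i=X^{{}^*\bfi_i}$ coming from Lemma~\ref{lem:identities}, is precisely that computation with the signs handled correctly.
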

 Following Remark~\ref{rem:init_cluster_comm} we have $\Lambda_{T_0}=\Lambda$ so that $\Sigma_{T_0}=(\bfX_{T_0},B_{T_0})$ forms the initial seed for the quantum cluster algebra $\cA_{|\FF|}(\tilde{B},\Lambda)$.  We now obtain the main technical result of this article.
 \begin{theorem}\label{th:tilt_mut}
  Suppose $\mu_k(T)=T'$.  Then the quantum seeds $\Sigma_T$ and $\Sigma_{T'}$ are related by the quantum seed mutation in direction $k$.
 \end{theorem}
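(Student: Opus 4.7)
The plan is to verify that the three data of a quantum seed transform correctly. By Theorem~\ref{th:tilt_matrix_mut} the exchange matrices $B_T$ and $B_{T'}$ are already related by Fomin--Zelevinsky matrix mutation in direction $k$, so it suffices to check that (i) the new cluster variable produced by the Berenstein--Zelevinsky formula agrees with the character $X_{T_k^*}$ (or $X_{k}$, depending on the case), and (ii) the commutation matrix $\Lambda_{T'}$ is obtained from $\Lambda_T$ via the rule \eqref{eq:comm_mutation}.

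My approach to (i) is to split into two cases mirroring the definition of $\mu_k(T)$. When $k\in\supp(T)=\supp(\bar T)$, Proposition~\ref{prop:exch2} verifies the hypotheses of Theorem~\ref{th:exchange_mult} for $V=T_k$ and $W=T_k^*$, and the multiplication formula~\eqref{eq:mult1} expresses $X_{T_k}X_{T_k^*}$ as a sum of two terms involving $X_E$ and $X_{D\oplus A}*X^{{}^*\bfi}$ with explicit $q$-scalars. Since $E,A,D\in add(\bar T)$ and $\soc I$ is supported away from $\supp T$ by Proposition~\ref{prop:exch2}.4, Theorems~\ref{th:qcc-comm} and \ref{th:init-comm} refactor each summand as a twisted monomial in the cluster variables of $\bfX_T$ other than $X_{T_k}$. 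Using the explicit description $\bfb^k=[A]+[D]+{}^{*_P}[I]-[E]$ from Section~\ref{sec:tilt_matrix} together with the disjoint-support statement of Lemma~\ref{lem:h1}, the two terms are identified with the exchange monomials $q^\alpha X^{\bfb^k_+}$ and $q^\beta X^{\bfb^k_-}$; comparing with the quantum cluster exchange relation at $\Sigma_T$ yields the desired equality $X_{T_k^*}=X^{\bfb^k_+-e_k}+X^{\bfb^k_--e_k}$. The remaining case, in which the mutation adds or removes $k$ from the support, is entirely analogous: Theorem~\ref{th:proj_mult} replaces Theorem~\ref{th:exchange_mult}, Proposition~\ref{prop:approx} replaces Proposition~\ref{prop:exch2}, and the column formula $\bfb^k=[F]+[P']^{*_P}-[G]-{}^{*_P}[I']$ together with Lemma~\ref{lem:h2} takes the place of the one above. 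By the involutivity of local tilting mutation only one direction of this second case needs to be checked.

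For (ii), I use the intrinsic formula for $\Lambda_T$ and appeal to Lemma~\ref{lem:identities}\eqref{id4}, which rewrites pairings of the form $\Lambda({}^*\bft_i,{}^*\bft_j)$ as combinations of Ringel--Euler pairings. Applying Lemma~\ref{lem:24} or Lemma~\ref{lem:28} then expresses the new dimension vectors $\bft'_i$ in terms of the old $\bft_i$ via exactly the coefficients appearing in the matrix $E$ of \eqref{eq:EF}, so the identity $\Lambda_{T'}=E^t\Lambda_T E$ reduces to a direct expansion that mirrors the exchange-matrix calculation carried out in the proof of Theorem~\ref{th:tilt_matrix_mut}.

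The main obstacle is step (i): one must keep track of three sources of $q$-scalars --- the scalar appearing in the multiplication theorem, the twist incurred when refactoring $X_E$ and $X_{D\oplus A}$ into the cluster $\bfX_T$ via Theorem~\ref{th:qcc-comm}, and the twist from Theorem~\ref{th:init-comm} used to pull the frozen factor $X^{{}^*\bfi}$ past the character --- and verify that their total agrees with the $q$-exponent prescribed by the Berenstein--Zelevinsky exchange relation in the quantum torus $\cT_{\Lambda_T,q}$. This bookkeeping is essentially equivalent to step (ii), so the two computations can be organised in parallel.
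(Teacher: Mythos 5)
Your proposal is correct and takes essentially the same approach as the paper: the paper's own proof is just a condensed version of your argument, citing Theorem~\ref{th:tilt_matrix_mut} for the exchange matrix and observing that the multiplication formulas of Section~\ref{sec:mult_theorems} (whose hypotheses were verified in Propositions~\ref{prop:exch2} and~\ref{prop:approx}) are exactly the Berenstein--Zelevinsky exchange relations between $\bfX_T$ and $\bfX_{T'}$. The only minor divergence is in the commutation data, where the paper appeals to the Berenstein--Zelevinsky compatibility machinery for $(\tilde{B},\Lambda)$ rather than your direct computation of $\Lambda_{T'}=E^t\Lambda_T E$ via Lemmas~\ref{lem:24} and~\ref{lem:28}; both routes work.
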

 \begin{proof}
  Note that the multiplication formulas of Section~\ref{sec:mult_theorems} exactly correspond to the Berenstein-Zelevinsky quantum cluster exchange relations relating $\bfX_T$ and $\bfX_{T'}$.  We have already shown in Theorem~\ref{th:tilt_matrix_mut} that $\mu_k(B_T)=B_{T'}$.  To complete the proof we remark that the compatibility of the pair $(\tilde{B},\Lambda)$ guarantees that each cluster consists of a quasi-commuting collection of cluster variables and that the commutation matrices of neighboring clusters are related by the Berenstein-Zelevinsky mutation rule.
 \end{proof}
 Since the quantum seed $\Sigma_{T_0}$ identifies with the initial quantum seed of $\cA_{|\FF|}(\tilde{B},\Lambda)$ this completes our goal of generalizing the classical cluster characters of Caldero and Chapoton to the quantum cluster algebra setting.
 \begin{corollary}\cite[Conjecture 1.10]{rupel}\label{cor:main}
  The quantum cluster character $V\mapsto X_V$ defines a bijection from exceptional representations $V$ of $(Q,\bfd)$ to non-initial quantum cluster variables of the quantum cluster algebra $\cA_{|\FF|}(\tilde{B},\Lambda)$.
 \end{corollary}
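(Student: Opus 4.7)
The plan is to assemble the ingredients already established, together with two classical facts from tilting theory, to obtain the bijection. Let $\cT$ denote the set of local tilting representations of $(Q,\bfd)$ obtainable from $T_0=0$ by iterated mutation, and let $\cE$ denote the set of isomorphism classes of exceptional valued representations of $Q$. I will produce two complementary surjections and then check injectivity of the cluster character map.

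First I would show that the assignment $T\mapsto\Sigma_T$ gives a bijection between $\cT$ and the set of seeds of $\cA_{|\FF|}(\tilde{B},\Lambda)$ that are reachable from the initial seed. Since $\Sigma_{T_0}$ is the initial seed, and Theorem~\ref{th:tilt_mut} asserts $\mu_k(\Sigma_T)=\Sigma_{\mu_k T}$, an induction on the length of a mutation sequence in $\TT$ shows that every seed is of the form $\Sigma_T$ for some $T\in\cT$. Combined with the transitivity of the mutation operation on local tilting representations (the remark following the definition of $\mu_k$), this identifies the set of clusters of $\cA_{|\FF|}(\tilde{B},\Lambda)$ with $\{\bfX_T\}_{T\in\cT}$. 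Reading off the non-initial cluster variables from the definition of $\bfX_T=\{X'_i\}$, they are exactly the elements $X_{T_i}$ as $i$ ranges over $\supp(T)$ and $T$ ranges over $\cT$, i.e. the image of $\cE$ under $V\mapsto X_V$ intersected with the (now implicitly constructed) set of indecomposable summands of objects of $\cT$.

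Next I would show that every exceptional representation occurs as a summand of some $T\in\cT$. Given an exceptional $V$, Bongartz's classical complement construction (or the analogue in Theorem~\ref{th:hr}.3 applied after passing to the full subquiver $Q^V$) produces a tilting representation of $Q^V$ containing $V$ as a summand, which is by definition a local tilting representation of $Q$. By the transitivity remark this local tilting representation lies in $\cT$. Consequently $X_V$ appears in some $\bfX_T$, so the map $V\mapsto X_V$ from $\cE$ surjects onto the non-initial cluster variables.

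Finally I would verify injectivity. Suppose $X_V=X_W$ for exceptional $V,W$; I claim $\bfv=\bfw$, which by the uniqueness of exceptional representations with a given dimension vector (itself a standard consequence of Theorem~\ref{th:hr} together with the equivalence of Proposition~\ref{prop:equiv}) forces $V\cong W$. The two extremal terms in formula~\eqref{eq:qcc} corresponding to $\bfe=0$ and $\bfe=\bfv$ contribute the monomials $X^{-{}^*\bfv}$ and $X^{-\bfv^*}$, respectively; by \eqref{B_matrices} these monomials are distinct extremal exponents of the Laurent polynomial $X_V$ in $\cT_{\Lambda,q}$ and together they determine $\bfv$ via the matrices $B_Q^\pm$. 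Equality $X_V=X_W$ therefore implies $\bfv=\bfw$, and injectivity follows.

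The main obstacle I expect is verifying cleanly that the extremal monomials of $X_V$ recover the dimension vector for an arbitrary rigid $V$ (one must check that no cancellation occurs between these boundary terms and interior terms in the sum); this is where the locality of the support of $V$ combined with the homological interpretation of ${}^*\bfv$ and $\bfv^*$ via socles of injectives and tops of projectives (Lemma~\ref{lem:identities}.\eqref{id5}--\eqref{id6}) must be used carefully. Everything else is assembly of the results already in the paper.
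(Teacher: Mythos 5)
Your proposal is correct and takes essentially the same route as the paper, which obtains the corollary by combining Theorem~\ref{th:tilt_mut} with the transitivity of mutation on local tilting representations, so that every seed is of the form $\Sigma_T$ and every exceptional representation occurs as a summand of some reachable $T$. You merely make explicit two points the paper leaves implicit --- surjectivity via the Bongartz-type completion over $Q^V$ and injectivity via positivity/no-cancellation of the extremal monomials together with the uniqueness of an exceptional representation with a given dimension vector --- and those details are sound.
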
 
 \noindent This completes the proof of Theorem~\ref{th:main} from Section~\ref{sec:intro}.
 
 Note that since there is a unique isomorphism class for each exceptional valued representation, the isomorphism classes of rigid objects in the Grothendieck group $\cK(Q)$ are independent of the choice of ground field $\FF$.  This corollary together with the specialization argument of \cite[Proposition 2.2.3]{qin} immediately implies the following
 \begin{corollary}
  Let $V$ be a rigid representation in $\Rep_\FF(Q,\bfd)$.  Then for any $\bfe\in\cK(Q)$ the Grassmannian $Gr_\bfe^V$ has a counting polynomial $P_\bfe^\bfv(q)$ such that $|Gr_\bfe^V|=P_\bfe^\bfv(|\FF|)$. 
 \end{corollary}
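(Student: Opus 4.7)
The plan is to combine Corollary~\ref{cor:main} with the Quantum Laurent Phenomenon (Theorem~\ref{qlp}) via a field-independence argument, in the spirit of the specialization argument of \cite[Proposition 2.2.3]{qin}. Given a rigid representation $V$, write $V = \bigoplus_{i=1}^k T_i^{m_i}$ with the $T_i$ the distinct indecomposable summands; each $T_i$ is exceptional, and rigidity of $V$ forces $\Ext^1(T_i, T_j) = 0$ for all $i, j$. By standard tilting theory (iterating Theorem~\ref{th:hu}) the basic rigid module $\bigoplus_{i=1}^k T_i$ can be completed to a local tilting representation $T$. Iterating Theorem~\ref{th:qcc-comm} then yields
\[X_V = q^{\alpha}\, X_{T_1}^{m_1} \cdots X_{T_k}^{m_k}\]
for an explicit half-integer exponent $\alpha$ determined by $\Lambda$ and the dimension vectors. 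By Corollary~\ref{cor:main} each $X_{T_i}$ is a quantum cluster variable lying in the single cluster $\bfX_T$, so $X_V$ is---up to a power of $q^{1/2}$---a quantum cluster monomial.

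The next step is to lift from a fixed finite field to a universal statement. The quantum cluster algebra $\cA_q(\tilde B, \Lambda)$, with $q^{1/2}$ treated as a formal indeterminate, is a purely combinatorial object built from $(\tilde B, \Lambda)$; the Berenstein--Zelevinsky quantum seed mutation sequence producing $\bfX_T$ mirrors the sequence of local tilting mutations from $T_0$ to $T$ by Theorem~\ref{th:tilt_mut}, and neither side depends on the ground field. Hence $X_V$ lifts to a universal element $\mathbf{X}_V \in \cA_q(\tilde B, \Lambda) \subset \cT_{\Lambda,q}$ whose specialization at $q = |\FF|$ recovers the original $X_V$ for every $\FF$. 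Theorem~\ref{qlp} then produces a Laurent expansion
\[\mathbf{X}_V = \sum_{\bfe \in \cK(Q)} c_\bfe(q^{1/2})\, X^{-\bfe^* - {}^*(\bfv-\bfe)}, \qquad c_\bfe(q^{1/2}) \in \ZZ[q^{\pm 1/2}].\]

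Comparing this expansion with the defining formula~\eqref{eq:qcc} for $X_V$ after specialization gives
\[c_\bfe(|\FF|^{1/2}) = |\FF|^{-\frac{1}{2}\langle\bfe,\bfv-\bfe\rangle}\, |Gr_\bfe^V|\]
for every finite field $\FF$. Setting $P_\bfe^\bfv(q) := q^{\frac{1}{2}\langle\bfe,\bfv-\bfe\rangle}\, c_\bfe(q^{1/2})$, this is \emph{a priori} a Laurent polynomial in $q^{1/2}$ which agrees with the non-negative integer $|Gr_\bfe^V|$ on the infinite set of prime powers $\{|\FF|\}$. A standard interpolation argument then forces the half-integer and negative powers of $q^{1/2}$ in $P_\bfe^\bfv$ to vanish, so $P_\bfe^\bfv \in \ZZ[q]$ is the desired counting polynomial.

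The main obstacle is the universality step: namely, that $X_V$ genuinely arises as the specialization of a single generic element $\mathbf{X}_V$, rather than depending on $\FF$ in an uncontrolled way. This amounts to verifying that the matrices $B_T$ and $\Lambda_T$ from Sections~\ref{sec:exch_mut} and~\ref{sec:quantum_seed}, together with the quantum seed exchange identities of Theorem~\ref{th:tilt_mut}, hold as formal identities over $\ZZ[q^{\pm 1/2}]$ and depend only on $(Q,\bfd)$. Once this universality is granted, the corollary follows from the elementary interpolation argument above.
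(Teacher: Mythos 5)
Your proposal is correct and follows essentially the same route as the paper: lift $X_V$ (which is, up to a power of $q^{\half}$, a quantum cluster monomial since $V$ is rigid) to the quantum cluster algebra with $q$ an indeterminate, apply the Quantum Laurent Phenomenon, and use integrality of $|Gr_\bfe^V|$ at all prime powers to conclude that $P_\bfe^\bfv(q)=q^{\half\langle\bfe,\bfv-\bfe\rangle}L_\bfe^\bfv(q)$ is an honest polynomial. The only cosmetic difference is that the paper handles your ``universality step'' by citing the support-preserving surjection from $\cA_q(\tilde{B},\Lambda)$ onto $\cA_{|\FF|}(\tilde{B},\Lambda)$ of \cite[Proposition 2.2.3]{qin}, rather than re-deriving the field-independence of the seed data directly.
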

 \begin{proof}
  In \cite[Proposition 2.2.3]{qin} Qin shows that there is a support preserving surjection from $\cA_q(\tilde{B},\Lambda)$, where $q$ is an indeterminate, to $\cA_{|\FF|}(\tilde{B},\Lambda)$.  The quantum Laurent phenomenon \cite{berzel} asserts that the structure constants of the initial cluster expansion of any cluster monomial live in $\ZZ[q^{\pm\half}]$.  Thus for each rigid valued representation $V$ there is a Laurent polynomial $L_\bfe^\bfv(q)$ in $q^\half$ which specializes to $L_\bfe^\bfv(|\FF|)=|\FF|^{-\half\langle\bfe,\bfv-\bfe\rangle}|Gr_\bfe^V|$.  Now since $|Gr_\bfe^V|$ is an integer for all finite fields $\FF$, we see that $P_\bfe^\bfv(q)=q^{\half\langle\bfe,\bfv-\bfe\rangle}L_\bfe^\bfv(q)$ is an honest polynomial in $q$ and $P_\bfe^\bfv(|\FF|)=|Gr_\bfe^V|$.
 \end{proof}
 We conjecture that these polynomials always have positive coefficients. 
 \begin{conjecture}\label{conj:pos}
  For any acyclic valued quiver $(Q,\bfd)$ and any rigid representation $V$, the polynomial $P_\bfe^\bfv(q)\in\ZZ[q]$ has nonnegative integer coefficients.
 \end{conjecture}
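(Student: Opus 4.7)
The plan is to reduce the conjecture to positivity statements for quantum cluster monomials in acyclic quantum cluster algebras. By Corollary~\ref{cor:main}, for each exceptional $V$ the element $X_V$ is a non-initial quantum cluster variable; Theorem~\ref{th:qcc-comm} upgrades this to the statement that $X_V$, for any basic rigid $V$, agrees up to a half-integer power of $q$ with the quantum cluster monomial attached to $V$. The coefficients appearing in~\eqref{eq:qcc} are precisely $|\FF|^{-\half\langle\bfe,\bfv-\bfe\rangle}|Gr_\bfe^V|$, and by the preceding corollary these are specializations at $q=|\FF|$ of $q^{-\half\langle\bfe,\bfv-\bfe\rangle}P_\bfe^\bfv(q)$. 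Thus nonnegativity of $P_\bfe^\bfv(q)$ becomes equivalent, after matching bar-invariant normalizations, to nonnegativity of the coefficients of the initial-cluster Laurent expansion of $X_V$.

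First I would invoke the quantum positivity theorem for acyclic quantum cluster algebras: every quantum cluster monomial admits an initial Laurent expansion with coefficients in $\ZZ_{\geq 0}[q^{\pm\half}]$. In the equally valued setting this is the content of Qin's work cited in the introduction, and it delivers Conjecture~\ref{conj:pos} immediately for skew-symmetric $(Q,\bfd)$. To cover the general skew-symmetrizable case I would proceed by \emph{folding}: realize $(Q,\bfd)$ as the $G$-quotient of a simply-laced acyclic quiver $\hat{Q}$ for an admissible diagram automorphism group $G$, lift $V$ to a $G$-equivariant rigid representation $\hat{V}$ of $\hat{Q}$ over $\overline{\FF}$, and identify $Gr_\bfe^V$ over $\FF$ with the simultaneous $G$-and-Frobenius-fixed locus of $Gr_{\hat{\bfe}}^{\hat{V}}$. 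A cellular or Bia\l ynicki-Birula decomposition of $Gr_{\hat{\bfe}}^{\hat{V}}$ into affine spaces over $\overline{\FF}$ that is $G$-equivariant and $\mathrm{Frob}$-rational will then descend to a decomposition of $Gr_\bfe^V$ whose point count is a polynomial in $q$ with nonnegative integer coefficients.

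The main obstacle will be producing such a $G$-equivariant cellular decomposition of $Gr_{\hat{\bfe}}^{\hat{V}}$. Caldero-Reineke, in the Dynkin case, construct their cells via Bia\l ynicki-Birula strata for a generic cocharacter of a maximal torus acting on the indecomposable summands of $\hat{V}$, and this argument has been extended beyond Dynkin by Nakajima and others; for the folding strategy to succeed one needs the cocharacter (equivalently, the associated grading of $\hat{V}$) to be $G$-invariant, and one needs the resulting affine cells to be individually $G$-stable rather than merely permuted by $G$. This is plausible when $G$ acts by permutations of isomorphic indecomposable summands of $\hat{V}$, but may force technical adjustments in general --- passage to an iterated stratification, or replacement of affine cells by products of affine spaces with finite groups of $G$-invariants. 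An alternative, more direct route would extend Efimov's recent techniques for unimodularity in the equally-valued case to the valued setting, ideally producing positivity and unimodularity simultaneously and bypassing the need for any quantum cluster positivity input altogether.
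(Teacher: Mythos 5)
You are attempting Conjecture~\ref{conj:pos}, which the paper itself does not prove: it is stated and left open, with only the remarks that Qin \cite{qin} settles the acyclic equally valued case and that the rank~2 skew-symmetrizable case follows from \cite{rupel2}. So there is no argument in the paper to compare yours against, and your proposal must be judged on its own terms. Its first step is sound: by Corollary~\ref{cor:main} and Theorem~\ref{th:qcc-comm}, the coefficients of the initial-cluster Laurent expansion of the quantum cluster monomial attached to a rigid $V$ are exactly $q^{-\half\langle\bfe,\bfv-\bfe\rangle}P_\bfe^\bfv(q)$, so nonnegativity of $P_\bfe^\bfv$ is equivalent to quantum Laurent positivity for these monomials, and in the equally valued case this is indeed supplied by \cite{qin}.

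The genuine gap is in the skew-symmetrizable step, and it is not a technicality you can expect to absorb into ``technical adjustments.'' Your folding argument requires, as input, a cellular decomposition of $Gr_{\hat{\bfe}}^{\hat{V}}$ into affine spaces for a rigid representation $\hat{V}$ of an arbitrary acyclic quiver $\hat{Q}$; no such decomposition is known at this level of generality. The positivity known in the equally valued acyclic case (Qin \cite{qin}, Efimov \cite{ef}) comes from odd-cohomology vanishing / Serre polynomials via graded quiver varieties, not from cells, so the object you propose to make $G$-equivariant does not exist yet even before equivariance is imposed. Even granting cells, you would need each cell to be individually $G$-stable and Frobenius-rational so that the $g\circ\mathrm{Frob}$-twisted fixed locus of each cell contributes a clean power of $q$; generically $G$ only permutes cells, and orbits of cells contribute point counts of twisted forms that require a separate descent analysis, which you do not supply. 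Finally, the identification of $Gr_\bfe^V(\FF)$ with the $G$-and-Frobenius fixed locus of $Gr_{\hat{\bfe}}^{\hat{V}}(\overline{\FF})$, and the rigidity of the lift $\hat{V}$ (via the equivalence underlying \cite{hub2}), are plausible Galois-descent statements but are asserted rather than proved. As written, the proposal correctly reduces the problem and correctly names the obstruction, but it does not overcome it; the conjecture remains open beyond the cases already recorded in the paper.
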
 
 Qin \cite{qin} has settled this conjecture for acyclic equally valued quivers.  The positivity of these counting polynomials in skew-symmetrizable rank 2 Grassmannians follows from the results of our recent combinatorial description of noncommutative rank 2 cluster variables in \cite{rupel2}.  In a recent preprint \cite{ef}, Efimov shows in the acyclic equally valued case that in addition the counting polynomials are unimodular, i.e. they are a shifted sum of bar-invariant $q$-numbers.  We conjecture that this property always holds.
 \begin{conjecture}
  In the hypotheses of Conjecture~\ref{conj:pos}, the polynomial $P_\bfe^\bfv(q)\in\ZZ[q]$ is unimodular.
 \end{conjecture}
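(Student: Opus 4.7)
The plan is to deduce palindromicity from the bar-invariance of cluster monomials, and then upgrade palindromicity to full unimodularity by an unfolding argument that transports Efimov's equally-valued result \cite{ef} to the valued case.

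First I would exploit that by Theorem~\ref{th:main} every quantum cluster character $X_V$ attached to a rigid valued representation equals a quantum cluster monomial, and is therefore invariant under the bar-involution $q^\half\mapsto q^{-\half}$, $X_i\mapsto X_i$. Since the basis monomials $X^\bfa$ are themselves bar-invariant by construction, this symmetry forces each coefficient in the expansion~\eqref{eq:qcc} of $X_V$ to be fixed by $q\mapsto q^{-1}$. Reading off the coefficient of $X^{-\bfe^{*}-{}^{*}(\bfv-\bfe)}$ and using Corollary~\ref{poly_count} to replace $|Gr_\bfe^V|$ by $P_\bfe^\bfv(q)$ yields the palindromic identity $P_\bfe^\bfv(q)=q^{\langle\bfe,\bfv-\bfe\rangle}P_\bfe^\bfv(q^{-1})$. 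Unimodularity in the sense of the paper is the assertion that $q^{-\half\langle\bfe,\bfv-\bfe\rangle}P_\bfe^\bfv(q)$ is a sum of bar-invariant $q$-numbers $[n]_q$; the palindromic step thus supplies exactly the correct symmetry and reduces the remaining content of the conjecture to unimodality of the coefficient sequence.

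To obtain this unimodality, I would reduce to the equally-valued case handled by Efimov \cite{ef}. Choose an admissible unfolding $(\tilde Q,\tilde\bfd\equiv 1)\twoheadrightarrow(Q,\bfd)$ realizing $(Q,\bfd)$ as the fixed-point data of a cyclic group $G=\langle\sigma\rangle$ acting on an acyclic ordinary quiver $\tilde Q$. A rigid valued representation $V$ of $(Q,\bfd)$ lifts to a $\sigma$-equivariant rigid $\tilde Q$-representation $\tilde V$, and for a compatible lift $\tilde\bfe$ of $\bfe$ the Grothendieck-Lefschetz trace formula should identify $P_\bfe^\bfv(q)$ with the alternating trace of $\sigma$ on the graded cohomology $H^{*}\bigl(Gr_{\tilde\bfe}^{\tilde V}\bigr)$. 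Efimov's theorem provides a decomposition of this cohomology into shifted bar-invariant $q$-numbers; passing to the $\sigma$-character would transport the unimodular structure down to $P_\bfe^\bfv$, provided that the Efimov decomposition can be chosen $\sigma$-equivariantly.

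The principal obstacle is precisely this $\sigma$-equivariance. If Efimov's decomposition can be realized by cup product with an ample class on a $\sigma$-stable smooth compactification of $Gr_{\tilde\bfe}^{\tilde V}$ (for instance a Nakajima-type graded quiver variety into which it embeds equivariantly), then the Lefschetz subquotients inherit the $\sigma$-action and equivariance is automatic by functoriality; this is the route I would pursue first. Otherwise one must refine the underlying construction, perhaps by producing a $G$-equivariant perverse-sheaf model on Lusztig's quiver variety or, at the algebraic level, by categorifying $\cA_{|\FF|}(\tilde B,\Lambda)$ through a monoidal category admitting a Chevalley-type autoequivalence descending $\sigma$. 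Producing such a framework in the full acyclic skew-symmetrizable generality is where the hardest work lies, and is the step most likely to require genuinely new geometric input rather than an adaptation of the computations already developed in this paper.
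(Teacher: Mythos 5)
The statement you are trying to prove is left open in the paper: it is stated as a conjecture, with the only supporting evidence being Efimov's theorem in the acyclic \emph{equally valued} case and Qin's positivity result there. So there is no proof in the paper to measure your argument against; the question is whether your proposal closes the conjecture, and it does not. The first half is fine but modest: bar-invariance of quantum cluster variables (Berenstein--Zelevinsky), together with Theorem~\ref{th:main}, Theorem~\ref{th:qcc-comm} (to extend from exceptional to rigid decomposable $V$), and the injectivity of $\bfe\mapsto -\bfe^*-{}^*(\bfv-\bfe)$ in the principal-coefficient quantum torus, does give the palindromic identity for $q^{-\half\langle\bfe,\bfv-\bfe\rangle}P_\bfe^\bfv(q)$. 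But that is only the symmetry; it says nothing about the coefficient sequence being a \emph{nonnegative} combination of bar-invariant $q$-numbers, which is the actual content of the conjecture.

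The unfolding half, where all the content sits, has a genuine gap beyond the $\sigma$-equivariance issue you flag. First, the passage from $(Q,\bfd)$ over $\FF$ to a quiver with automorphism is not a geometric $\ZZ/m$-action whose trace you can simply take on cohomology: in the species picture (cf.\ \cite{hub2}) the $\FF$-rational points of the valued Grassmannian correspond to fixed points of $\sigma$ \emph{composed with Frobenius} on the unfolded Grassmannian, so identifying $P_\bfe^\bfv(q)$ with an alternating $\sigma$-trace on $H^*(Gr_{\tilde\bfe}^{\tilde V})$ evaluated at $q$ requires a twisted Grothendieck--Lefschetz argument plus purity statements that you have not supplied; Corollary~\ref{poly_count} gives the existence of a counting polynomial but not this cohomological description. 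Second, and more fatally, even if you had a fully $\sigma$-equivariant Efimov/Lefschetz decomposition, the character of a nontrivial finite-order automorphism on a sum of Lefschetz blocks is a combination of roots of unity times $q$-numbers; it is not automatically a nonnegative integer combination of bar-invariant $q$-numbers, so unimodularity does not descend ``by functoriality'' -- you would have to prove that $\sigma$ acts trivially (or with controlled character) on the relevant subquotients, which is precisely the hard open point. As your own closing paragraph concedes, this step needs new input; as written, the proposal is a research program consistent with the conjecture, not a proof of it.
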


\end{document}